%
%
%
%
%
 
%
\RequirePackage{fix-cm}
\documentclass[lewno11pt,smallextended,envcountsect,envcountsame]{svjour3} 



%
\smartqed  
\usepackage{graphicx}
%
%

%
%
%

\usepackage[english]{babel}

\usepackage{cite}
\usepackage{enumitem}
\usepackage{theoremref}
\usepackage{hyperref}
\usepackage{amsfonts}
\usepackage{amssymb}
\usepackage{amsmath}
\usepackage{dsfont,bbm}
\hyphenation{sub-di-ffe-ren-tial}
\hyphenation{theo-rem}

\usepackage{enumitem}
\usepackage{multicol}

\usepackage{lineno}
\usepackage{color}%

\newcommand{\Intf}[1]{{\mathcal{I}}_{#1}}
\newcommand{\Expf}[1]{{E}_{#1}}
\newcommand{\m}[1]{{{m}}_{#1}}
\newcommand{\Dc}[1]{ \mathcal{D}_c (#1) }

\newcommand{\Tau}{\mathcal{T}}

\DeclareMathOperator{\epi}{epi}

\DeclareMathOperator{\dom}{dom}
\newcommand{\tto}{\rightrightarrows}
\newcommand{\R}{\mathbb{R}}

\newcommand{\Rex}{\overline{\mathbb{R}} }
\newcommand{\N}{\mathbb{N}}
\newcommand{\dmu}{d\mu}
\newcommand{\dnu}{d\nu}

\newcommand{\ACp}{\operatorname{AC}^{1,p}([a,b],X)}
\newcommand{\ACq}{\operatorname{AC}^{1,q}([a,b],X^{\ast})}
\newcommand{\eps}{\varepsilon}


\renewcommand{\H}{\mathcal{H}}
\newcommand{\1}{\mathds{1}}
 \newcommand{\Inte}{{\operatorname{Int}}}

\DeclareMathOperator{\supp}{supp}
\let\epsilon\varepsilon

\newcommand{\cl}[1]{\operatorname{cl}  #1 }
\usepackage{float}
\overfullrule=2cm

\title{Integral functionals on nonseparable Banach spaces with  applications\thanks{J.G. Garrido and E. Vilches were supported by Centro de Modelamiento Matem\'atico (CMM), ACE210010 and FB210005, BASAL funds for center of excellence from ANID-Chile. P. P\'erez-Aros and E. Vilches were supported by ANID-Chile under grants Fondecyt regular N$^{\circ}$ 1200283 and   Fondecyt regular N$^{\circ}$ 1220886.}}
\begin{document}


\titlerunning{Integral functionals on nonseparable Banach spaces with  applications}
\author{Juan Guillermo Garrido \and Pedro P\'erez-Aros \and Emilio Vilches
}


\institute{Juan Guillermo Garrido \at
              Departamento de Ingeniería Matemática, Universidad de Chile, Santiago, Chile. \\
              \email{jgarrido@dim.uchile.cl}           
           \and
           Pedro P\'erez-Aros  \at
              Instituto de Ciencias de la Ingenier\'ia, Universidad de O'Higgins, Rancagua, Chile.\\ \email{pedro.perez@uoh.cl}
              \and
              Emilio Vilches \at
              Instituto de Ciencias de la Ingenier\'ia, Universidad de O'Higgins, Rancagua, Chile.\\ \email{emilio.vilches@uoh.cl}
}

\date{Received: date / Accepted: date}
\maketitle

\begin{abstract}
In this paper, we study integral functionals defined on spaces of functions with values on general (non-separable) Banach spaces. We introduce a new class of integrands and multifunctions for which we obtain measurable selection results. Then, we provide an interchange formula between integration and infimum, which enables us to get explicit formulas for the conjugate and Clarke subdifferential of integral functionals. Applications to expected functionals from stochastic programming, optimality conditions for a calculus of variation problem and sweeping processes are given.
\keywords{Integral functional \and Measurable Selection \and Lusin integrand}
\end{abstract}

\section{Introduction}

Let  $X$ be a Banach space and $(T,\mathcal{A},\mu)$ a measure space. In this paper, we study integral functionals of the form $\Intf{f}(x): =\int_T f_t(x(t))\dmu$, where $x\colon T\to X$ is a measurable function and $f \colon T\times X\to \Rex$ is an appropriate integrand. This class of functionals constitutes an important object for many areas of applied  mathematics, especially for convex analysis, where Rockafellar has extensively studied it for normal integrands in finite-dimensional Banach spaces (see, e.g., \cite{MR236689, MR310612}). Then, these results where extended to general separable Banach spaces  (see, e.g., \cite{MR0373611,MR0467310,levin1975convex,MR1485775}). The variational analysis of integral functionals typically relies on an appropriate formula for the interchange between integration and infimum, which can be obtained via a measurable selection for the unbounded epigraph multifunction (hence the importance of separability). We emphasize that measurable selection theorem for bounded multifunctions defined on non-separable spaces are available (e.g.  \cite{MR2642727}). However, the same does not happen for unbounded multifunctions (such as the epigraph), which prevents the subdifferential analysis of these operators, with the consequent lack of tools for the study of problems of the calculus of variations in non-separable Banach spaces.

Therefore, the aim of this paper is to study integral functionals defined on spaces of functions with values on general (not necessarily separable) Banach spaces.  To achieve this goal, we focus our attention on $\sigma$-finite \emph{inner regular} measures  defined over a complete $\sigma$-algebra. This setting covers the most important measure spaces (particularly any interval with the Lebesgue measure) and is enough for most applications.   Hence, we introduce a new class of integrands and multifunctions for which we can obtain measurable selection results. Then, we provide an interchange formula between integration and infimum, which enables us to get explicit formulas for the conjugate and Clarke subdifferential of integral functionals. Applications to expected functionals, the calculus of variations and sweeping processes are given.

The paper is organized as follows.  After some preliminaries, in Section \ref{sect-3}, we provide an interchange formula between integral and infimum over continuous functions when the epigraph of the integrand is convex and lower semicontinuous. Section \ref{sect-4} is dedicated to studying the class Lusin integrands.  Section \ref{sect-5} introduces the notions of admissible integrands and admissible multifunctions, which enables us to find measurable selection results for general Banach spaces. In Section \ref{sect-6}, we provide the main result of this paper, which is the conjugate formula for an admissible integrand over decomposable spaces.  Then, we give calculus rules for the  $\epsilon$-subdifferencial of integral functionals defined over the space of $p$-integrable functions, distinguishing between the case  $p<\infty$ and $p=\infty$. In Section \ref{sect-7}, we characterize the Clarke subdifferential of integral functionals on general Banach spaces, which enables us, in Section \ref{sect-8}, to obtain  optimality conditions for a calculus of variations problem (see, e.g., \cite{MR1488695}).  Finally, in Section \ref{sect-9}, we prove that two formulations of the sweeping process are equivalent, using the interchange formula of Section \ref{sect-3}. 

\section{Notation and preliminary results}
In what follows,  $(X,\| \cdot\| )$ will be a Banach space endowed with the norm $\Vert \cdot \Vert$. The topological dual of $X$ is denoted by $X^\ast$ and the duality product between $X$ and $X^{\ast}$ is denoted by $\langle \cdot,\cdot \rangle$. 
The weak$^\ast$ topology on $X^\ast$ is simply denoted by $w^\ast$.  For a set $A$ of a  topological space $(Z,\tau)$, we denote its interior and closure by $\Inte_{\tau} A $ and $\cl^{\tau} A$, respectively. When no confusion arises,  we simply omit the symbol $\tau$.  The closed ball of radius $r$  and center $x$ is denoted by $\mathbb{B}_r(x)$.

As usual, we denote by $\Gamma_0(X)$ the  set of proper,  convex and lower semicontinuous defined on $X$ with values in the extended real line $\Rex:= [-\infty,\infty ]$. For $C\subset X$, we denote by $\delta_C$ as its indicator function, which is equal to $0$ over $C$ and $+\infty$ over $X\setminus C$. For  $g\colon X\to \Rex$  its conjugate function is $g^\ast \colon X^\ast \to \Rex$ given by  $g^\ast(x^\ast):=\sup\left\{ \langle x^\ast, x\rangle - g(x) \colon x\in X  \right\}$. If $C\subset X$, we define the support function of $C$ as $\sigma_C := (\delta_C)^\ast$. For a point $x\in X$, with $|g(x)|<+\infty$ and $\epsilon \geq 0$, we define the $\epsilon$-subdifferential of $g$ at $x$, by
\begin{align*}
\partial_{\epsilon} g(x) :=\left\{  x^\ast \in X^\ast \colon \langle x^\ast , y- x\rangle \leq g(y) - g(x)+ \epsilon,\text{ for all } y \in X  \right\}.
\end{align*}
for convenience we set $\partial_{\epsilon} g(x) = \emptyset$ for $x \in X$ with $|g(x)|=+\infty$. For $\epsilon=0$, the above definition recovers the subdiferential of convex analysis. For $C\subset X$ and $\epsilon\geq 0$, we define $N^\epsilon_C(x) := \partial_\epsilon \delta_C(x)$ for $x\in C$, which corresponds to $\epsilon$-normal cone of $C$.
The \emph{Fr\'echet subdifferential} of $g$ at $x$ is defined by $$\hat \partial g(x)=\left\{ x^\ast\in X^\ast\colon \liminf_{\|h\|\to 0} \frac{g(x+h)-g(x)-\langle x^\ast,h\rangle}{\|h\|}\geq 0 \right\}.$$ 
The \emph{limiting/Mordukhovich subdifferential} of $g$ at $x$ is defined by 
$$
\partial g(x):=\{w^{\ast}-\lim x_n^\ast \colon x_n^\ast\in\hat \partial g(x_n) \textrm{ and } (x_n,g(x_n))\to (x,g(x)) \}.
$$

If $C\subset X$, we define $N_C(x) := \partial \delta_C(x)$ for $x\in C$. If $C$ is a convex set, then $N_C(x) = N_C^0(x)$. Let $g\colon X\to\R$ locally Lipschitz around $x\in X$. The \emph{Clarke subdifferential} of $g$ at $x$ is defined by
$$\overline{\partial} g(x) := \{ x^\ast\in X\colon  \langle x^\ast,v\rangle \leq dg(x)(v) \textrm{ for all } v\in X \},$$
where 
$dg(x)(v) := \limsup_{y\to x, \ t\to 0^+}t^{-1}\left[g(y+tv)-g(y)\right]$.  We refer to \cite{MR2144010,MR1491362} for more details.



Given $(T,d)$ a metric space, by $\mathcal{B}(T)$ we denote the Borel $\sigma$-algebra on $T$.  The set of all continuous function $\phi \colon T\to X$ is denoted by $\mathcal{C}(T, X)$. 

 A multifunction $\Phi \colon T \tto X$ is said to be  \emph{lower semicontinous} (lsc)  at $t \in T$ if,  for every open set $V$ of $X$ such that $\Phi(t)\cap V \neq \emptyset$, there exists a neighbourhood $U$ of $t$ such that $\Phi(s) \cap V \neq \emptyset$ for all $s\in U$. Moreover, $\Phi$ is said lsc if it is lsc at every point of $\dom \Phi:=\{t\in T\colon \Phi(t)\neq \emptyset\}$.

\noindent In the sequel, $(T, \mathcal{A},\mu)$ will denote a complete $\sigma$-finite measure space containing $\mathcal{B}(T)$.  A function $s\colon T\to X$ is said \emph{simple}, if there are  disjoint sets  $A_1,...,A_n\in\mathcal{A} $ and $x_1,...,x_n\in X$ such that $s(t) = \sum_{k=1}^n x_k\1_{A_k}(t)$.

We say that $\phi\colon T \to X$ is \emph{Borel measurable} if $\phi^{-1}(U) \in\mathcal{A}$ for any open $U\subset X$. Furthermore, $\phi$ is said \emph{strongly measurable} if it is  the pointwise limit of simple functions. Finally,  $\phi $ is said \emph{weakly measurable} if for all $x^\ast \in X^\ast$ the map $t\mapsto \langle x^\ast,\phi(t)\rangle$ is  measurable. It is well-known that the above notions of measurability coincide when the space $X$ is separable.

\noindent According to \cite{MR3409135,MR1491362}, the  \emph{upper integral} of a measurable function $g\colon T \to \R$ is defined by
\begin{align*}
	\int_T g(t) \dmu := \int_T \max\{ g(t),0\} \dmu   +\int_T \min\{ g(t),0\} \dmu,
\end{align*}
with the convention $+\infty+(-\infty) =+\infty$. Given $f\colon T \times X \to \Rex$ and $t\in T$,  we denote by $f_t$ the function $x\mapsto f(t,x)$.  The epigraph multifunction (associated to $f$) is the set-valued map $t\tto \epi f_t:=\{ (x,\alpha) \in X\times \R \colon f_t(x)	\leq \alpha	\}$.

For $f\colon X\to \Rex$ we define the \emph{lower closure} of $f$ as as the unique function $\cl(f)\colon X\to\Rex$  for which   $\epi \cl(f)= \cl(\epi f)$.

A measure $\mu$ on $\mathcal{A} $ is said \emph{inner regular}  if for all $\varepsilon>0$ and $B \in \mathcal{A}$ of finite measure, there exists a compact set $K \subset  B$ such that $\mu(B\setminus K) < \epsilon$. We refer to \cite[Definition 2.15]{MR924157} for more details.  Throughout this paper we assume that $\mu$ is inner regular.  

The following result is a Lusin's theorem for strongly measurable functions in general (nonseparable) spaces.
\begin{proposition}\label{lusin}
    Let $X$ be a Banach space, $\mu$ an inner regular measure and $f \colon  T \to X$ a strongly measurable function. Then, for all $\epsilon>0$ and $C\in \mathcal{A}$ of finite measure, there exists a compact set $K\subset C$ such that $\mu(C\setminus K) < \epsilon$ and $f\colon K\to X$ is continuous.
\end{proposition}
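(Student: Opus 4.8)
The plan is to combine the defining simple-function approximation of strong measurability with Egorov's theorem and the inner regularity of $\mu$, reducing the problem to the elementary fact that a simple function is continuous when restricted to a suitable compact set. This is exactly why strong measurability (rather than weak or Borel measurability) is the natural hypothesis: it directly hands us the approximating simple functions. Fix $\epsilon>0$ and $C\in\mathcal{A}$ of finite measure, and choose simple functions $s_n\colon T\to X$ with $s_n(t)\to f(t)$ for every $t\in T$.

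First I would treat each simple function separately. Writing $s_n=\sum_{k=1}^{m_n} x_k^n \1_{A_k^n}$ with the $A_k^n\in\mathcal{A}$ pairwise disjoint (after adding the complement of their union as an extra piece with value $0$, we may assume they partition $T$), I apply inner regularity to each $A_k^n\cap C$ to obtain a compact set $K_k^n\subset A_k^n\cap C$ with $\mu((A_k^n\cap C)\setminus K_k^n)$ as small as desired. Setting $D_n:=\bigcup_{k} K_k^n$, this is a finite union of compact sets, hence compact, and the budgets may be chosen so that $\mu(C\setminus D_n)<\epsilon/2^{n+1}$. Since the $K_k^n$ are pairwise disjoint compact subsets of the metric space $T$, they are mutually separated, so each is relatively open in $D_n$; as $s_n$ is constant on each $K_k^n$, it follows that $s_n|_{D_n}$ is continuous.

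Next I would control the convergence. The map $t\mapsto \|s_n(t)-f(t)\|$ is a nonnegative measurable real function tending to $0$ pointwise, and $\mu(C)<\infty$, so Egorov's theorem yields a measurable set $E\subset C$ with $\mu(C\setminus E)<\epsilon/4$ on which $s_n\to f$ uniformly. Applying inner regularity once more, I extract a compact $D_0\subset E$ with $\mu(E\setminus D_0)<\epsilon/4$, so that $\mu(C\setminus D_0)<\epsilon/2$ and $s_n\to f$ still converges uniformly on $D_0$.

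Finally, set $K:=D_0\cap\bigcap_{n\geq 1}D_n$. As a closed subset of the compact set $D_0$ (recall compact subsets of a metric space are closed), $K$ is compact, and
\[
\mu(C\setminus K)\leq \mu(C\setminus D_0)+\sum_{n\geq 1}\mu(C\setminus D_n)<\frac{\epsilon}{2}+\sum_{n\geq 1}\frac{\epsilon}{2^{n+1}}=\epsilon.
\]
On $K$ each $s_n$ is continuous (as a restriction of $s_n|_{D_n}$) and $s_n\to f$ uniformly (since $K\subset D_0$); hence $f|_K$, being a uniform limit of continuous $X$-valued functions, is continuous. The only step requiring genuine care is the vector-valued Egorov argument, but it reduces to the scalar case through the measurable function $\|s_n-f\|$, so no separability of $X$ is ever used.
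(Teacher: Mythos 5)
Your proof is correct, but it takes a genuinely different route from the paper. The paper reduces to the separable case: by the Pettis-type theorem (\cite[Ch.2 Theorem 2]{MR0453964}), a strongly measurable $f$ is almost everywhere separably valued, so after discarding a null set one may view $f$ as a measurable map into a separable Banach subspace $Y$ and invoke the known Lusin theorem for such targets (\cite[Theorem 7.1.13]{MR2267655}). You instead argue directly from the defining approximation by simple functions: inner regularity makes each $s_n$ continuous on a compact $D_n$ (disjoint compact pieces are relatively open in their finite union, so the locally constant restriction is continuous), the scalar Egorov theorem applied to $t\mapsto\|s_n(t)-f(t)\|$ — which is measurable since $s_n-f$ is strongly measurable — upgrades pointwise to uniform convergence on a compact $D_0$, and $f$ restricted to $K=D_0\cap\bigcap_n D_n$ is then a uniform limit of continuous functions; the budget $\epsilon/2+\sum_n \epsilon/2^{n+1}$ closes the measure estimate, and $K$ is compact as a closed subset of $D_0$. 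Each step checks out, including the observation that no separability of $X$ enters (the vector-valued Egorov issue genuinely does reduce to the scalar case via the norm). What each approach buys: the paper's proof is shorter and delegates the analytic work to two standard citations, but it routes through separable-range and separable-target machinery; yours is self-contained and in effect reproves the Lusin theorem in the nonseparable vector-valued setting from first principles, which makes the role of strong measurability (as opposed to weak or Borel measurability) transparent — exactly the distinction the paper's subsequent counterexample for weakly$^\ast$ measurable functions is meant to highlight.
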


\begin{proof} Fix $\varepsilon>0$ and $C\in \mathcal{A}$ of finite measure. 
   By virtue of \cite[Ch.2 Theorem 2]{MR0453964}, there exists $B\subset C$ with $\mu(C\setminus B)=0$ such that $S=f(B)$ is separable. Let us define $N := C\setminus B$ and set $Y$ as the closed subspace generated by $S$. It is clear that $Y$ is a separable Banach space. It follows that $f\colon B\to Y$ is measurable.  Hence, by  \cite[Theorem 7.1.13]{MR2267655},  we obtain a compact set $K\subset B$ such that $\mu(B\setminus K)<\epsilon$ and $f\colon K\to Y$ is continuous. Since $\mu(N)=0$, we obtain that $\mu(C\setminus K)<\epsilon$, which ends the proof.

\end{proof}

 For $p\in [1,\infty)$, the space of $p$-integrable functions with the usual norm is denoted by $L^p(T,X)$. The space of  essentially bounded functions with the usual norm is denoted by $L^\infty(T,X)$.  When $X=\R$, we simply write $L^p(T)$ instead of $L^p(T,X)$.
  
An element $\lambda\in L^\infty(T,X)^\ast$ is said \textit{singular measure with respect to $\mu$} if there exists an increasing sequence of measurable sets $(A_n)_{n\in \N}$ of $T$ such that $T = \bigcup_{n\in\N} A_n$ and for all $n\in \mathbb{N}$
$$
\lambda|_{A_n\cap A} = 0 \textrm{ for all } A\in\mathcal{A} \textrm{ with } \mu(A)<\infty,
$$
where $\lambda|_{B}$ is defined as $\lambda|_{B}(x) := \langle \lambda,x\1_{B}\rangle$ for $x\in L^\infty(T,X)$. 

  
 We denote by $\ACp$ the space of absolutely continuous functions whose a.e. derivative belongs to $L^p([a,b],X)$. It is well known that if $f\in \ACp$ then $$f(t)=f(a)+\int_a^t\dot{f}(s)ds \textrm{ for all } t\in [a,b],$$
 where $\dot{f}$ denotes the a.e. derivate of $f\in \ACp$. It is well-known that $\ACp$ endowed with the norm $\|f\|_{\ACp} = |f(a)| + \|\dot{f}\|_{p}$ is a Banach space.
 
\section{Interchange of infimum and integral on continuous functions}\label{sect-3}
In this section, we provide a result for the interchange of infimum and integral over the set of $X$-valued continous functions. Throughout this section, $(T,d)$ is a metric space and $(T,\mathcal{A},\mu)$ is a $\sigma$-finite measure space with inner regular measure $\mu$.

\begin{lemma}\label{first_lemma}
	Let $\Tau_{X^\ast}$ be a topology on $X^\ast$ such that the duality product is continuous with respect to the product topology $\Tau_X\times \Tau_{X^\ast}$. Let $f\colon T \times X\to \Rex$ be a function such that $t \tto \epi f_t$ is lsc.  Then, the following assertions hold. 
	\begin{enumerate}[label=(\alph*),ref=\ref{first_lemma}-(\alph*)]
	    \item \label{f_l_a} The map $(t,x^\ast) \to f^\ast_t(x^\ast)$ is lsc with respect to $\Tau_T\times \Tau_{X^\ast}$. 
	    \item \label{f_l_b} The infimal value function $t\mapsto \m{f}(t):=\inf_{x\in X}f_t(x)$ is usc. 
	    \item  \label{f_l_c} If  the functions $f_t$ are lsc for all $t\in T$, then for all $\phi \in \mathcal{C}(T,X)$ the function $t \mapsto  f_t(\phi(t))$ is $\mathcal{A}$-measurable.
	\end{enumerate}
\end{lemma}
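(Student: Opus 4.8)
The plan is to prove the three items using throughout the identity that expresses the conjugate as a supremum taken directly over the epigraph. For each $t$ one has
\[
f_t^\ast(x^\ast) = \sup_{(x,\alpha)\in\epi f_t}\big(\langle x^\ast,x\rangle - \alpha\big),
\]
since the inner supremum over $\alpha\geq f_t(x)$ is attained in the limit at $\alpha=f_t(x)$. This rewriting is the crux: it turns $f_t^\ast$ into a supremum of the jointly continuous map $(x,x^\ast,\alpha)\mapsto\langle x^\ast,x\rangle-\alpha$ evaluated along the multifunction $t\tto\epi f_t$, so that lower semicontinuity of $f^\ast$ should follow from lsc of the epigraph. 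I would treat (a) first, deduce (b) from it, and handle (c) separately.

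For (a) I would fix $\lambda\in\R$ and a pair $(t_0,x_0^\ast)$ with $f_{t_0}^\ast(x_0^\ast)>\lambda$, and show that $\{f^\ast>\lambda\}$ contains a $\Tau_T\times\Tau_{X^\ast}$-neighbourhood of $(t_0,x_0^\ast)$. By the displayed formula there is $(\bar x,\bar\alpha)\in\epi f_{t_0}$ with $\langle x_0^\ast,\bar x\rangle-\bar\alpha>\lambda$; using the assumed joint continuity of the duality product (together with continuity of $\alpha\mapsto-\alpha$) I obtain $\Tau_X$-open $V_X\ni\bar x$, $\Tau_{X^\ast}$-open $W\ni x_0^\ast$ and an open interval $I\ni\bar\alpha$ on which $\langle x^\ast,x\rangle-\alpha>\lambda$. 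Then $V:=V_X\times I$ is open in $X\times\R$ and meets $\epi f_{t_0}$, so lsc of the epigraph yields a neighbourhood $U$ of $t_0$ with $\epi f_t\cap V\neq\emptyset$ for every $t\in U$; picking $(x_t,\alpha_t)$ in this intersection gives $f_t^\ast(x^\ast)\geq\langle x^\ast,x_t\rangle-\alpha_t>\lambda$ for all $(t,x^\ast)\in U\times W$. Item (b) is then immediate, since $\m{f}(t)=-f_t^\ast(0)$ and $t\mapsto(t,0)$ is continuous into $T\times X^\ast$, so $t\mapsto f_t^\ast(0)$ is lsc by (a) and $\m{f}$ is usc.

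The genuine obstacle is (c), because $g(t):=f_t(\phi(t))$ need not be semicontinuous (both the integrand and the evaluation point move with $t$), so I do not expect measurability to come from a single semicontinuity argument. The idea is to localize using that each $f_t$ is lsc: for lsc functions one has $g(t)=\sup_{n\in\N}g_n(t)$ with $g_n(t):=\inf\{f_t(x):\|x-\phi(t)\|<1/n\}$, the supremum being an increasing limit. It then suffices to prove that each $g_n$ is usc, hence Borel and (since $\mathcal{B}(T)\subset\mathcal{A}$) $\mathcal{A}$-measurable; $g$ is then $\mathcal{A}$-measurable as a countable supremum. To get usc of $g_n$ I would fix $t_0$ with $g_n(t_0)<\lambda$, choose $\bar x$ with $\|\bar x-\phi(t_0)\|<1/n$ and $f_{t_0}(\bar x)<\lambda$, and pick $\beta\in(f_{t_0}(\bar x),\lambda)$ so that $(\bar x,\beta)\in\epi f_{t_0}$. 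Applying lsc of the epigraph to a small box $V_X\times I$ around $(\bar x,\beta)$ with $\sup I<\lambda$ produces, for $t$ near $t_0$, a point $(x_t,\alpha_t)\in\epi f_t$ with $x_t\in V_X$ and $\alpha_t<\lambda$; shrinking $V_X$ and using continuity of $\phi$ via the triangle inequality $\|x_t-\phi(t)\|\leq\|x_t-\bar x\|+\|\bar x-\phi(t_0)\|+\|\phi(t_0)-\phi(t)\|$ keeps $\|x_t-\phi(t)\|<1/n$, so $g_n(t)\leq f_t(x_t)\leq\alpha_t<\lambda$. This shows $\{g_n<\lambda\}$ is open. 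The two points to handle carefully are the $\pm\infty$ bookkeeping in the conjugate identity and the choice of the radius $\rho:=1/n-\|\bar x-\phi(t_0)\|>0$ securing the triangle-inequality estimate.
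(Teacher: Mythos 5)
Your proposal is correct and follows essentially the same route as the paper: part (a) is the same neighbourhood argument from joint continuity of the pairing plus lsc of $t\tto\epi f_t$ (your epigraph-supremum identity $f_t^\ast(x^\ast)=\sup_{(x,\alpha)\in\epi f_t}(\langle x^\ast,x\rangle-\alpha)$ is just a cleaner bookkeeping of the $\pm\infty$ cases the paper handles via the point $(x,f_t(x))$), and (b) is the identical observation $\m{f}(t)=-f_t^\ast(0)$. In (c) you use exactly the paper's decomposition $f_t(\phi(t))=\sup_n\inf\{f_t(x)\colon\|x-\phi(t)\|<1/n\}$; the only (cosmetic) difference is that you verify upper semicontinuity of each localized infimum $g_n$ directly by an epigraph argument, whereas the paper shows the auxiliary integrand $g_k(t,\cdot)$ has lsc epigraph multifunction and then invokes parts (a)--(b) --- the underlying computation is the same.
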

\begin{proof}
	\noindent \textit{(a)} Let $t\in T$, $x^\ast \in X^\ast$  and $\alpha \in \mathbb{R}$  with $f_{t}^\ast(x^\ast) > \alpha$. By definition of the conjugate, there exists $x\in X$ such that $\langle x^\ast ,x \rangle - f_{t}(x) >\alpha$. Hence, by continuity of the duality product, there exist  $W_1 \in \mathcal{N}_{x}$, $W_2 \in \mathcal{N}_{x^\ast}$  and $\eta >0$ such that 
	\begin{align}\label{first_lemma_eq01}
		\langle w^\ast, w\rangle - \alpha >  \eta> f_{t}(x) \textrm{ for all } w\in W_1, w^\ast \in W_2.
	\end{align}
	Thus, $(x,f_{t}(x)) \in \epi f_{t} \cap\left(  W_1 \times (-\infty, \eta) \right)$.  By the lsc of $t\tto \epi f_t$, there exists $U \in \mathcal{N}_t$ such that $\epi f_{s} \cap\left(  W_1 \times (-\infty, \eta) \right)\neq \emptyset$  for all $s \in U$. Hence, by virtue of \eqref{first_lemma_eq01},  for all $s \in U$ there exists $w_{s}  \in W_1$ such that
	\begin{align*}
		f_{ s}(w_{s}) < \eta < \langle w^\ast, w_{s}\rangle - \alpha \textrm{ for all }  w^\ast \in W_2.
	\end{align*}
	Consequently, $f^\ast_{s}(w^\ast) > \alpha$ for all $s\in U$ and $w^\ast \in W_2$, which proves \textit{(a)}. 
	\newline \noindent \textit{(b)} We have that $m_f(t) = -f_t^\ast(0)$ and by Lemma \ref{f_l_a}, we get that $m_f$ is usc.
	
	\noindent \textit{(c)} For each $k \in \N$, let us consider the function
	$$
	g_k(t,x)=f_t(x) \textrm{ if } \| x-\phi(t)\|<1/k;\,\, g_k(t,x)=+\infty \textrm{ if } \| x-\phi(t)\|\geq 1/k.
	$$
	We claim that $t \tto \epi g_k(t,\cdot)$ is lsc for all $k\in \N $. Indeed, consider $t \in T$, $(x, \alpha) \in X\times \R$ and  an open set  $U\times V$ such that $(x,\alpha) \in \epi g_k(t, \cdot) \cap\left( U\times V \right)$. Therefore, $\|x- \phi(t)\|  < 1/k$. Moreover, by continuity we can take open sets $U_1 \in \mathcal{N}_x$ and $W \in\mathcal{N}_t$ with $U_1 \subset U$ such that $\|x'- \phi(t')\| < 1/k$ for all $(t',x') \in W\times U_1$. In particular,  $(x,\alpha) \in  \epi f_t \cap U_1 \times V $, so by the lsc of the epigraph multifunction there exists $W_1 \in \mathcal{N}_t$ with $W_1 \subset W$ such that $\epi f_{t'} \cap U_1\times V \neq \emptyset$. Therefore, $\epi g_k(t', \cdot) \cap U\times V \neq \emptyset$ for all $t\in W_1$. \newline
	 Now, $t \mapsto \inf_{ x \in X} g_{k} (t,x) =-g^\ast_k(t,0_{X^\ast})$ is usc (by the previous part), hence, measurable. Finally, since $f_t $ is lsc  we have $f_t(\phi(t)) = \sup_{ k \in \N } \inf_{ x \in X} g_{k} (t,x)$, which proves the measurability of $t \mapsto f_t(\phi(t))$.
	
\end{proof}
The next result provides a continuous selection result for the sublevel sets of integrands defined over $T\times X$. Recall that the infimal value function (associated to $f$) is the function $\m{f}\colon T \to \Rex$ defined by $ \m{f}(t):=\inf_{ x \in X} f_t(x).$ 
\begin{lemma}\label{lemma_selection} 
      Let $f\colon T\times X\to \overline{\R}$ and $B\subset T$ be such that $B \ni t\tto \epi f_t$ is lsc with closed and convex  values. If there is an lsc function $\alpha\colon B\to\Rex$ such that $m_f(t)<\alpha(t)$ for all $t\in B$, then there exists a continuous function $x\colon B\to X$ such that $f_t(x(t))\leq \alpha(t)$ for all $t\in B$. 
\end{lemma}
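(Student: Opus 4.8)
The plan is to realize the desired function as a continuous selection of an auxiliary sublevel multifunction and to invoke Michael's continuous selection theorem. Concretely, I would introduce the \emph{strict sublevel} multifunction
\[
\Phi\colon B \rightrightarrows X, \qquad \Phi(t) := \{x\in X\colon f_t(x) < \alpha(t)\}.
\]
Since $\m{f}(t) = \inf_{x\in X} f_t(x) < \alpha(t)$ for every $t\in B$, each $\Phi(t)$ is nonempty; and since $\epi f_t$ is convex (so that $f_t$ is a convex function), each strict sublevel set $\Phi(t)$ is convex. Note also that the hypothesis that $\epi f_t$ is closed means that $f_t$ is lower semicontinuous, a fact I will use at the end.

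The main work is to show that $\Phi$ is lsc on $B$. Fix $t_0\in B$ and an open set $V\subset X$ with $\Phi(t_0)\cap V \neq \emptyset$, and pick $x_0 \in V$ with $f_{t_0}(x_0) < \alpha(t_0)$. Choose reals $\beta'<\beta$ with $f_{t_0}(x_0) < \beta' < \beta < \alpha(t_0)$ (possible also when $\alpha(t_0)=+\infty$). Then $(x_0,\beta')\in \epi f_{t_0}\cap\left(V\times(-\infty,\beta)\right)$, so by the lsc of $B\ni t\rightrightarrows \epi f_t$ there is a neighborhood $U_1$ of $t_0$ in $B$ such that for every $s\in U_1$ one can find $(x_s,\alpha_s)\in \epi f_s \cap\left(V\times(-\infty,\beta)\right)$; in particular $x_s\in V$ and $f_s(x_s)\leq \alpha_s < \beta$. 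On the other hand, since $\alpha$ is lsc and $\alpha(t_0) > \beta$, there is a neighborhood $U_2$ of $t_0$ with $\alpha(s) > \beta$ for all $s\in U_2$. Thus, for every $s\in U_1\cap U_2$ we obtain $f_s(x_s) < \beta < \alpha(s)$, i.e. $x_s\in \Phi(s)\cap V$, which proves that $\Phi$ is lsc at $t_0$.

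Once the lsc of $\Phi$ is established, I would pass to the closure $\overline{\Phi}(t) := \cl{\Phi(t)}$. Taking closures preserves both lower semicontinuity and convexity, so $\overline{\Phi}$ is an lsc multifunction with nonempty closed convex values into the Banach space $X$. Since $B$, being a subspace of the metric space $T$, is metrizable and hence paracompact, Michael's continuous selection theorem yields a continuous map $x\colon B\to X$ with $x(t)\in \overline{\Phi}(t)$ for all $t\in B$. Finally, because $f_t$ is lsc and $x(t)$ belongs to the closure of $\{y\colon f_t(y)<\alpha(t)\}$, it is the limit of a sequence $(y_n)$ with $f_t(y_n)<\alpha(t)$, whence $f_t(x(t))\leq \liminf_n f_t(y_n)\leq \alpha(t)$, which is exactly the required inequality.

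I expect the lower semicontinuity of $\Phi$ to be the crux: the delicate point is coupling the lsc of the epigraph multifunction (which controls $f_s(x_s)$ from above by a fixed level $\beta$) with the lsc of $\alpha$ (which keeps $\alpha(s)$ above the same $\beta$), so that a single choice of intermediate levels $\beta'<\beta$ works uniformly on a neighborhood of $t_0$. The passage to the closure, forced by the closed-values requirement of Michael's theorem, is what makes the final lsc argument necessary in order to recover the non-strict inequality $f_t(x(t))\leq\alpha(t)$ from membership in $\overline{\Phi}(t)$.
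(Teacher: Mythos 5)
Your proof is correct, and it follows the same overall strategy as the paper: realize the selection via a sublevel multifunction and Michael's theorem. The difference is in how the crux (lower semicontinuity of the multifunction) is handled. The paper works directly with the non-strict sublevel map $G(t)=\{x\colon f_t(x)\leq \alpha(t)\}$, whose values are already closed; to prove its lsc it must deal with points $x$ where $f_{\bar t}(x)=\alpha(\bar t)$ exactly, and it does so by a convexity argument, sliding along the segment from $x$ toward a point $z$ with $f_{\bar t}(z)<\alpha(\bar t)$ to gain a strict margin before invoking the lsc of the epigraph map. You instead work with the strict sublevel map $\Phi(t)=\{x\colon f_t(x)<\alpha(t)\}$, where every point already has a strict margin, so your lsc argument needs no convexity at all --- it only couples the epigraph lsc with the lsc of $\alpha$ via the intermediate levels $\beta'<\beta$, exactly as you identify. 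The price is that $\Phi$ may fail to have closed values, which you repair by passing to $\cl \Phi$ (lsc and convexity being preserved under closure) and then recovering $f_t(x(t))\leq\alpha(t)$ from the lower semicontinuity of $f_t$. In effect you decouple the hypotheses: closedness of $\epi f_t$ enters only in your final limit step, and convexity only through Michael's theorem, whereas the paper uses both inside the lsc proof; your variant thus makes visible that the lsc of the strict sublevel multifunction holds without any convexity assumption, at the cost of the extra closure step.
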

\begin{proof}
    Define $G\colon B \tto X$ as $G(t):=\{ x\in X\colon f_t(x)\leq \alpha(t)\}$. Since, $\m{f}(t)< \alpha(t)$ for all $t\in B$, it follows that $G$ takes closed, convex and nonempty values. Now, we check that $G$ is lsc. Indeed, consider ${\bar{ t} } \in B$ and an open set $V \subset X$ such that $G({\bar{ t} }) \cap V \neq \emptyset$, so let $x \in V$,  such that $f_{{\bar{ t} }}(x)\leq \alpha({\bar{ t} })$.  Since $\m{f} ({\bar{ t} } ) < \alpha({\bar{ t} })$, there exists $z \in X$ such that 
	$f_{{\bar{ t} }}(z) < \alpha(t)$. Consider $z_\lambda := (1-\lambda)x + \lambda z$, since $z_\lambda \to x$ as $\lambda \to 0$, we have that there exists $\lambda_0 >0$ such that $z_\lambda \in V$ for all $\lambda \in (0,\lambda_0)$, furthermore, by convexity, $f_{t }(z_\lambda) <  \alpha(t)$. We fix $\lambda\in (0,\lambda_0)$ and $\eta\in (f_{t }(z_\lambda), \alpha(t))$. On the one hand, since $\alpha$ is lsc, the map $(t,\gamma)\in B\times \R\mapsto \alpha(t) - \gamma$ is lsc, then there exists $U_1\times I\in \mathcal{N}_{(t,\eta)}$ such that $\alpha(t)>\gamma$ for all $(t,\gamma)\in U_1\times I$. On the other hand, since $\epi f_t \cap \hat{V}  \neq \emptyset$, where $\hat V:= V \times  I $, there exists $U_2 \in\mathcal{N}_t $ such that $\epi f_t \cap \hat V\neq \emptyset$, for all $t\in U_2$, therefore $G(t)   \cap V \neq \emptyset $   for all $t \in U_1\cap U_2$,    which proves that $G$ is lsc. Hence, by Michael selection's theorem (see, e.g., \cite[Theorem 1.2]{MR203702}, see also \cite{MR2990543,MR3028182} for further generalizations), there exists $x \in \mathcal{C}(B,X)$ with  $x(t) \in G(t)$  for all $t\in B$, so $f_t(x(t))\leq \alpha(t), \forall t\in B$.
	
\end{proof}

For $f\colon T\times X\to \Rex$, we denote by $\Dc{\m{f}}$ the set of all integrable functions $h\in \mathcal{C}(T,\R)$ such that $m_f (t) < h(t)$ for all $t\in T$. We recall that if $T$ is a metric space (see, e.g., \cite[Chap VIII, Sec. 5, Theorem 4.3]{MR0478089}), then for any usc function $g\colon T \to \R$ there exists a continuous function $h\colon  T\to \R$ such that $g(t) < h(t)$, for all $t\in T$. 
However, it is not clear when the set $ \Dc{\m{f}}$ should be nonempty for a general measure space. Nevertheless, when this set is nonempty, we can establish that the value of the integral of $\m{f}$ can be expressed as the infimum of the integral of the functions in $ \Dc{\m{f}}$.

\begin{lemma}\label{second_lemma}
      Let $f\colon T\times X\to\Rex$ be a function such that $t\tto \epi f_t$ is lsc. If $\Dc{\m{f}}\neq\emptyset$, then the following equality holds:
      \begin{align*}
		\int_T \m{f}(t)\dmu =\inf\left\{ \int_T \beta(t)\dmu \colon \beta \in \Dc{\m{f}}	\right\}.
	\end{align*}
\end{lemma}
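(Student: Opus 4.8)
The plan is to record first that $\m{f}$ is usc by Lemma \ref{f_l_b}, hence $\mathcal{A}$-measurable, so the upper integral on the left-hand side is well defined, and then to establish the two inequalities separately. For the inequality $\int_T \m{f}(t)\dmu \le \inf\{\int_T\beta\dmu : \beta\in\Dc{\m f}\}$ I would simply use monotonicity of the upper integral: every $\beta\in\Dc{\m f}$ satisfies $\m{f}(t)\le\beta(t)$ for all $t$, and since $\beta$ is integrable the positive part $\max\{\m f,0\}$ has finite integral (being dominated by $\max\{\beta,0\}$), so no indeterminate $+\infty+(-\infty)$ arises and $\int_T\m f\dmu\le\int_T\beta\dmu$; taking the infimum over $\beta$ gives the claim.

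The substantive direction is $\inf\{\int_T\beta\dmu:\beta\in\Dc{\m f}\}\le\int_T\m f\dmu$. When $\int_T\m f\dmu=+\infty$ there is nothing to prove, so I assume it is finite or $-\infty$, and it then suffices to construct a sequence $\beta_n\in\Dc{\m f}$ with $\int_T\beta_n\dmu\to\int_T\m f\dmu$. I fix $h_0\in\Dc{\m f}$, which exists by hypothesis, so $h_0$ is continuous, integrable and $\m f<h_0$ everywhere. The key idea is to regularize $\m f$ from above while keeping it trapped below the integrable function $h_0$, which simultaneously yields continuity, strict-majorant control and integrability.

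Concretely, set $g:=\m f-h_0$, which is usc as the difference of a usc and a continuous function and satisfies $g<0$, hence is bounded above by $0$. For $n\in\N$ I define the sup-convolution $g_n(t):=\sup_{s\in T}\{g(s)-n\,d(s,t)\}$; standard properties give that each $g_n$ is $n$-Lipschitz, that $g\le g_n\le 0$, and that $g_n\downarrow g$ pointwise as $n\to\infty$ (this last using that $g$ is usc and bounded above). Then $\gamma_n:=g_n+h_0$ is continuous, satisfies $\m f\le\gamma_n\le h_0$ and $\gamma_n\downarrow\m f$; since $\gamma_n\le h_0$ with $h_0$ integrable, monotone convergence for decreasing sequences dominated from above (equivalently, applied to $h_0-\gamma_n\uparrow h_0-\m f$) yields $\int_T\gamma_n\dmu\to\int_T\m f\dmu$. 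To upgrade $\gamma_n\ge\m f$ to a strict inequality I take the convex combination $\beta_n:=\tfrac1n h_0+(1-\tfrac1n)\gamma_n$: because $h_0>\m f$ and $\gamma_n\ge\m f$, the positive weight on $h_0$ forces $\beta_n>\m f$ everywhere, so $\beta_n\in\Dc{\m f}$, while $\int_T\beta_n\dmu=\tfrac1n\int_T h_0\dmu+(1-\tfrac1n)\int_T\gamma_n\dmu\to\int_T\m f\dmu$.

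I expect the main obstacle to be integrability and finiteness of the regularization when $\m f$ is unbounded above, since a naive sup-convolution of $\m f$ itself can be identically $+\infty$. Subtracting the continuous majorant $h_0$ first, so that one regularizes a function bounded above by $0$, is precisely what makes every $g_n$ finite and $n$-Lipschitz and what keeps the whole construction squeezed between $\m f$ and the integrable $h_0$; the convex-combination step then resolves the remaining difficulty of turning the non-strict inequality $\gamma_n\ge\m f$ into the strict one required for membership in $\Dc{\m f}$.
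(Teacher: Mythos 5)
Your regularization argument is correct, and genuinely different from the paper's, in the case where $\int_T \m{f}(t)\dmu$ is finite. The paper instead extracts a countable minimal sequence from the family of continuous majorants of $\m{f}$ (via the essential infimum, \cite[Theorem 4.7.1]{MR2267655}) and shows $\hat\ell=\m{f}$ $\mu$-a.e.\ by an inner-regularity-plus-compactness contradiction; your Pasch--Hausdorff envelopes avoid both that machinery and any use of inner regularity of $\mu$. In the finite case every step checks out: $g_n$ is real-valued and $n$-Lipschitz as soon as $g\not\equiv-\infty$, $g_n\downarrow g$ by upper semicontinuity, and $0\le h_0-\gamma_n\le h_0-\m{f}$ with $\int_T(h_0-\m{f})\dmu<\infty$ gives $\gamma_n\in L^1(T)$, so indeed $\beta_n\in\Dc{\m{f}}$ and $\int_T\beta_n\dmu\to\int_T\m{f}(t)\dmu$. (The case $\int_T\m{f}(t)\dmu=+\infty$ in fact cannot occur once $\Dc{\m{f}}\neq\emptyset$, since $\m{f}<h_0$ with $h_0$ integrable.)

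The genuine gap is the case $\int_T\m{f}(t)\dmu=-\infty$, which you explicitly claim to cover. Membership in $\Dc{\m{f}}$ requires integrability, and you only control $\gamma_n$ from \emph{above} by $h_0$; from below you have nothing better than $\m{f}$, whose negative part now has infinite integral, so the assertion ``$\beta_n\in\Dc{\m{f}}$'' is unjustified, and it can really fail. Take $T=\R$ with Lebesgue measure, $f_t(x):=-e^{t^2}$, so $\m{f}(t)=-e^{t^2}$, and $h_0(t):=e^{-t^2}\in\Dc{\m{f}}$. Then $g_n(t)\ge g(0)-n|t|=-2-n|t|$, while splitting the supremum over $|s|\ge |t|/2$ and $|s|<|t|/2$ gives $g_n(t)\le-\min\{e^{t^2/4},\,1+n|t|/2\}$; hence for each fixed $n$ the function $\gamma_n=g_n+h_0$ decays linearly at infinity, its negative part is not Lebesgue-integrable, and $\beta_n\notin\Dc{\m{f}}$, so your sequence does not witness that the infimum equals $-\infty$. (The conclusion itself still holds in this example, via e.g.\ $\beta(t)=\max\{-e^{t^2},-ne^{-|t|}\}+e^{-t^2}$, but your construction does not produce such functions.) The degenerate subcase $\m{f}\equiv-\infty$ breaks the construction outright, since then $g_n\equiv-\infty$ is not Lipschitz. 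Truncating $\gamma_n$ from below by constants does not repair this when $\mu(T)=\infty$, so the $-\infty$ case needs a separate argument; note that this is exactly the regime where the paper's minimal-majorant argument, rather than a squeeze between $\m{f}$ and a single integrable $h_0$, is doing the work.
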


\begin{proof}
    First, let us consider the nonempty set $\mathcal{F}$ of all continuous functions $\ell  \colon T \to \R$ such that $\m{f}(t) < \ell(t)$ for all $t \in T$ (see, e.g.,  \cite[Chapter IX, \S 6, Proposition 5]{MR1726872}). Then,   by \cite[Theorem 4.7.1]{MR2267655}, there exists a sequence of continuous functions  $ (\ell_k) \subset \mathcal{F}$ such that $\hat \ell(t):= \inf_k \ell_k(t)$ satisfies that for all $\ell \in \mathcal{F}$, $\hat \ell(t) \leq \ell (t)$ for $\mu$-a.e. $t\in T$ (minimality). Hence,  by the Lebesgue dominate convergence theorem we have that
	\begin{align*}
		\int_T \hat \ell = \inf\left\{ \int_T \beta(t)\dmu \colon \beta \in \Dc{\m{f}}	\right\}.
	\end{align*}
	Now, we assume by contradiction that there exist a measurable set $A$  with positive measure such that $\m{f}(t) <  \hat \ell  (t)$ for all $t\in A$. Then, there exists $\beta \in \mathbb{R}$ such that 
	$\mu(A_\beta) >0$, where  $A_\beta := \{ t \colon \m{f}(t) < \beta < \hat \ell(t)\}.$
	Since $\mu$ is $\sigma$-finite and inner regular, we can find a compact set $K\subset A_\beta$ such that $\mu(K)>0$. Now, for each $t \in K$,  by virtue of \cite[Chapter IX,\S~6, Proposition 5]{MR1726872}, there exists a function $\rho_{t} \in \mathcal{F}$ such that $  \rho_t(t) < \beta$, and by continuity of $\rho_t$ there exists   $U_{t} \in \mathcal{N}_{t}$ such that $\rho_t(t') < \beta$ for all  $t'\in U_{t}.$ 
	So, $(U_t)_{t\in K}$ is an open cover of $K$ and by compactness, there exist $t_1,...,t_n\in K$ such that $K\subset \bigcup_{i=1}^n U_{t_i}$. Then, there exists $i$ with  $\mu(K\cap U_{t_i})>0$ and therefore $\mu(A_\beta\cap U_{t_i})>0$. So, $\rho_{t_i}$ contradicts the minimality of $\hat \ell$, which ends the proof.
\end{proof}

The next theorem is the main result of this section. It provides an interchange formula between infimum and integration for continuous functions.
\begin{theorem}[Interchange of infimum and integral on continuous functions]\label{mainresult}
	Let  $f\colon T\times X \to \Rex$ be  such that  $t \tto \epi f_t $  is  lsc with closed and convex values. If $\Dc{\m{f}}\neq \emptyset$, then,
	\begin{align*}
		\inf_{\phi\in \mathcal{C}(T,X)}	 \int_{ T} f_t(\phi(t)) \dmu =\int_{ T}\inf_{x\in X}f_t(x)\dmu=\int_{ T}\m{f}(t) \dmu.
	\end{align*}
\end{theorem}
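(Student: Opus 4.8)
The plan is to read the displayed chain as two equalities and dispatch them separately. The second equality, $\int_T \inf_{x\in X}f_t(x)\dmu = \int_T \m{f}(t)\dmu$, is nothing more than the definition $\m{f}(t)=\inf_{x\in X}f_t(x)$, so the entire content sits in the first equality, which I would prove as two opposite inequalities.

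For the inequality $\inf_{\phi\in\mathcal{C}(T,X)}\int_T f_t(\phi(t))\dmu \geq \int_T \m{f}(t)\dmu$, I would first note that since $\epi f_t$ is closed for every $t$, each $f_t$ is lower semicontinuous; hence Lemma \ref{f_l_c} applies and guarantees that for any $\phi\in\mathcal{C}(T,X)$ the map $t\mapsto f_t(\phi(t))$ is $\mathcal{A}$-measurable, so the (upper) integral is well defined. The pointwise bound $f_t(\phi(t))\geq \inf_{x\in X}f_t(x)=\m{f}(t)$ then holds for every $t\in T$, and integrating and passing to the infimum over $\phi$ yields the inequality. This direction is routine.

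For the reverse inequality I would combine the continuous-selection result of Lemma \ref{lemma_selection} with the integral approximation of Lemma \ref{second_lemma}. Fix any $\beta\in\Dc{\m{f}}$; by definition $\beta$ is continuous (in particular lsc) and satisfies $\m{f}(t)<\beta(t)$ for all $t\in T$. Applying Lemma \ref{lemma_selection} with $B=T$ and $\alpha=\beta$ produces a continuous function $x_\beta\in\mathcal{C}(T,X)$ with $f_t(x_\beta(t))\leq\beta(t)$ for all $t$, so that
$$\inf_{\phi\in\mathcal{C}(T,X)}\int_T f_t(\phi(t))\dmu \leq \int_T f_t(x_\beta(t))\dmu \leq \int_T \beta(t)\dmu.$$
Since this holds for every $\beta\in\Dc{\m{f}}$, taking the infimum over such $\beta$ and invoking Lemma \ref{second_lemma} gives
$$\inf_{\phi\in\mathcal{C}(T,X)}\int_T f_t(\phi(t))\dmu \leq \inf_{\beta\in\Dc{\m{f}}}\int_T \beta(t)\dmu = \int_T \m{f}(t)\dmu,$$
which closes the argument.

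The main obstacle lies entirely in the reverse inequality, and in fact its difficulty has already been absorbed into the two preceding lemmas: Lemma \ref{lemma_selection}, whose proof leans on Michael's selection theorem together with the convexity and lower semicontinuity of the epigraph multifunction, and Lemma \ref{second_lemma}, which uses the inner regularity and $\sigma$-finiteness of $\mu$ and the existence of a minimal continuous envelope of $\Dc{\m{f}}$. Given those, the theorem is a clean splice of the two. The one point I would be careful to check is that the hypothesis $\Dc{\m{f}}\neq\emptyset$ is exactly what makes both lemmas applicable and what ensures $\m{f}$ is majorized by an integrable continuous function, keeping all the upper integrals meaningful.
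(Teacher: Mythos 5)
Your proof is correct and follows essentially the same route as the paper's: the easy inequality is noted, and the reverse inequality is obtained by feeding each $\beta\in\Dc{\m{f}}$ into Lemma \ref{lemma_selection} to get a continuous selection and then invoking Lemma \ref{second_lemma} to identify $\inf_\beta\int_T\beta\,\dmu$ with $\int_T\m{f}\,\dmu$. Your added measurability check via Lemma \ref{f_l_c} is a welcome explicit detail the paper leaves implicit, and your omission of the paper's case split on $\int_T\m{f}\,\dmu=+\infty$ is harmless since Lemma \ref{second_lemma} covers that case anyway.
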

\begin{proof} 
	It is clear that the inequality $\geq $ always holds. To prove the opposite inequality,  let us observe that, if $ \int_T  \m{f}(t) \dmu =+\infty$, then the result is trivial.  Hence, we can assume that $ \int_T  \m{f}(t) \dmu <+\infty$. Let $\beta \in \Dc{\m{f}}$, by Lemma \ref{lemma_selection}, there exists a continuous function $x\colon T\to X$ such that $f_t(x(t))\leq \beta(t)$ for all $t\in T$. Then, 
	\begin{align*}
		\inf_{\phi \in \mathcal{C}(T,X)}\int_{ T} f_t(\phi(t)) \dmu  \leq \int_T f_t(x(t))\dmu \leq \int_T \beta(t) \dmu.
	\end{align*}
	Finally, by virtue of Lemma \ref{second_lemma}, we obtain the desired inequality.
\end{proof}

The above theorem depends on the non-emptiness of the set $\Dc{\m{f}}$. The next proposition  gives a sufficient condition to ensure this condition.
\begin{proposition}
     If $(T,d)$ is a separable metric space, and $\mu$ satisfies that for all $x\in T$, there exists $U\in \mathcal{N}_x$ such that $\mu(U)<\infty$, then, $\Dc{\m{f}}\neq \emptyset$.
\end{proposition}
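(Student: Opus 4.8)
The plan is to build an integrable continuous majorant of the infimal value function $\m{f}$ by patching together local majorants with a partition of unity. First I would record the two pieces of regularity available. Since $t\tto\epi f_t$ is lsc (the standing assumption of this section), Lemma~\ref{f_l_b} gives that $\m{f}$ is usc, hence locally bounded above at every point where it is finite; and we may assume $\m{f}(t)<+\infty$ for all $t\in T$, since otherwise $\Dc{\m{f}}$ is trivially empty and there is nothing to prove. Combining upper semicontinuity with the local finiteness hypothesis on $\mu$, for each $t\in T$ I can then choose a neighborhood $U_t\in\mathcal{N}_t$ that is simultaneously of finite measure, $\mu(U_t)<+\infty$, and on which $\m{f}$ is bounded above, $\sup_{U_t}\m{f}<+\infty$.

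Next I would exploit separability. A separable metric space is Lindel\"of, so from the open cover $\{U_t\}_{t\in T}$ of $T$ I can extract a countable subcover $\{U_n\}_{n\in\N}$, still satisfying $\mu(U_n)<+\infty$ and $r_n:=\sup_{U_n}\m{f}+1<+\infty$. Since every metric space is paracompact, there is a locally finite continuous partition of unity $\{\psi_n\}_{n\in\N}$ subordinate to $\{U_n\}$, that is, $0\le\psi_n\le 1$, $\supp\psi_n\subset U_n$, and $\sum_n\psi_n\equiv 1$ with the sum locally finite. With these ingredients I would set $h:=\sum_{n\in\N} r_n\psi_n$. Local finiteness of the partition makes $h$ continuous, and for the majorization I fix $t\in T$: whenever $\psi_n(t)>0$ we have $t\in\supp\psi_n\subset U_n$, so $\m{f}(t)\le\sup_{U_n}\m{f}<r_n$, whence
\[
h(t)=\sum_{n} r_n\psi_n(t) > \m{f}(t)\sum_n \psi_n(t)=\m{f}(t).
\]
Thus $h\in\mathcal{C}(T,\R)$ is a continuous strict majorant of $\m{f}$ for every $t$.

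The hard part will be integrability, which is exactly the point the remark preceding Lemma~\ref{second_lemma} flags as delicate: the cited majorization result already delivers a continuous $g>\m{f}$ for free on a metric space, so the entire content of the proposition lies in producing one that is \emph{integrable}. Since $\supp\psi_n\subset U_n$ and $0\le\psi_n\le1$, one has $\int_T h\,\dmu\le\sum_n r_n\,\mu(U_n)$, where each term is finite by construction, but the series need not converge for a crude cover. The decisive step is therefore to choose the cover finely enough and the excesses $r_n-\sup_{U_n}\m{f}$ small enough that $\sum_n r_n\mu(U_n)<+\infty$. Here the countability coming from separability and the finiteness of each $\mu(U_n)$ coming from local finiteness are used in tandem, reducing the matter to controlling upper-sum expressions of the form $\sum_n(\sup_{U_n}\m{f})\,\mu(U_n)$. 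Securing this bound — producing a genuinely integrable $h$ rather than merely a continuous one — is the real obstacle, and it is precisely what the two standing hypotheses on $(T,d)$ and $\mu$ are meant to supply.
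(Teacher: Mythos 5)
Your opening moves are the right ones and in fact mirror the paper's Claim~1 (separability gives a Lindel\"of extraction of a countable cover by finite-measure open sets, and usc of $\m{f}$ via Lemma \ref{f_l_b} localizes the upper bounds), but the proof stops exactly where the proposition's content begins, and the concrete majorant you wrote down provably fails. With $r_n=\sup_{U_n}\m{f}+1$, take $T=\R$ with Lebesgue measure and $\m{f}\equiv 0$: then $r_n=1$ for all $n$, so $h=\sum_n r_n\psi_n=\sum_n\psi_n\equiv 1$, which is not integrable, even though $\Dc{\m{f}}\neq\emptyset$ there (any positive continuous integrable function works). So the fixed excess $+1$ must be made variable and summable; and even after that repair, the main term cannot be controlled by choosing the cover ``finely enough'': writing $p:=\max\{\m{f},0\}=\sum_n p\,\psi_n$, one must compare $\sum_n \bigl(\sup_{U_n}\m{f}\bigr)^+\int\psi_n$ with $\int p$, and the per-piece overshoot $\int\bigl(\sup_{U_n}\m{f}-p\bigr)\psi_n\,\dmu$ need not be small for any cover, because upper semicontinuity bounds $\sup_{U}\m{f}$ from above as $U$ shrinks but gives no lower bound on the values of $p$ on $U$ (a usc function can drop arbitrarily just off the point you centered at), and shrinking $\mu(U_t)$ is not available in general (atoms). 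A side error: dismissing the case $\m{f}(t_0)=+\infty$ as ``trivially empty, nothing to prove'' is backwards---if $\Dc{\m{f}}=\emptyset$ the conclusion of the proposition is false, not vacuous; finiteness of $\m{f}$ is genuinely used (the paper does so silently, through $T=\bigcup_n O_n$).

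The paper closes exactly this gap with three devices absent from your plan. First (its Claim~2), it manufactures a continuous, strictly positive, \emph{integrable} excess $\varphi=\sum_i\epsilon_i\varphi_i$ by damping a partition of unity with weights chosen so that $\epsilon_i\int_T\varphi_i\,\dmu<2^{-i}$; this replaces your constant $+1$. Second---the key idea---instead of sup-based local majorants it uses near-optimal ones: it cuts $p$ by a partition of unity subordinate to $O_i:=U_i\cap\{\m{f}<i\}$ into bounded pieces $\psi_i=p\,\varphi_i$ supported in finite-measure sets, shows $\Dc{\psi_i}\neq\emptyset$ via Urysohn's lemma plus $\varphi$ (Claim~3), and then applies Lemma \ref{second_lemma} to each piece to select $\beta_i\in\Dc{\psi_i}$ with $\int_T\beta_i\,\dmu\leq\int_T\psi_i\,\dmu+2^{-i}$, so that $\beta:=\sum_i\beta_i>\m{f}$ satisfies $\int_T\beta\,\dmu\leq\int_T p\,\dmu+1$: the total overshoot is budgeted at $1$, which no choice of $r_n=\sup_{U_n}\m{f}+\epsilon_n$ can achieve. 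Third, since $\beta$ is only lower semicontinuous, the argument ends with the usc--lsc insertion theorem on metric spaces to interpolate a continuous $h$ with $\m{f}<h<\beta$, whence $h\in\Dc{\m{f}}$. What is missing from your proposal is therefore not bookkeeping but two ideas: a mechanism (Lemma \ref{second_lemma}) producing local majorants whose integral exceeds that of the localized piece by a prescribed $2^{-i}$, and the insertion theorem that trades the resulting lsc envelope for a continuous function.
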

\begin{proof} 
    We will demonstrate the assertion, using the following three claims:\\
{\bf Claim~1:} \emph{There exists an open cover $(U_n)_{n\in\N}$ of $T$ such that $\mu(\cl(U_n))<\infty$ and $U_n\subset U_{n+1}$.} \newline
\noindent \emph{Proof of Claim 1}: 
First of all, we notice that for all $x\in T$, there exists $O_x\in \mathcal{N}_x$ such that $\mu(O_x)<\infty$, as $x\in O_x$, there exists $r_x>0$ such that $V_x := \{y\in T: d(x,y)<r_x\}\subset \{x\in T:d(x,y)\leq r_x\}\subset O_x$. Then, we have that $(V_x)_{x\in T}$ is an open cover of $T$, therefore, by virtue of \cite[Theorem 2.3.17]{MR2161427}, there exists a countable subcover $(V_{x_i})_{i\in \N}$ of $T$. Finally, we can define $U_n := \bigcup_{i=1}^n V_{x_i}$, which satisfies that $\mu(\cl(U_n))\leq \sum_{i=1}^n\mu(\cl(V_{x_i}))<\infty$, which concludes the claim.

\noindent {\bf Claim~2:} \emph{There is a continuous and integrable function $\varphi\colon T\to (0,\infty)$.}\newline
\noindent \emph{Proof of Claim 2}: 
By the Claim 1, there exists an open cover $(U_n)_{n\in\N}$ of $T$ such that $\mu(U_n)<\infty$, for all $n\in \N$. Since $(U_{n})_{n\in\N}$ is an open cover of $T$, there exists a partition of unity $\{\varphi_{n}\colon T\to [0,1]\}$ subordinated to $(U_{n})$ such that $\sum_{n}\varphi_{n}(t) = 1, \forall t\in T$, $\operatorname{supp}(\varphi_{n})\subset U_{n}$ for all $n\in\N$, $\{\operatorname{supp}(\varphi_{n})\}$ is a locally finite cover and $\varphi_{n}$ is continuous for all $n\in\N$ (see \cite[Chap. VIII, Sec. 5, Theorem 4.2]{MR0478089}). Since $\mu(U_{n})<\infty$, we have that $\varphi_{n}\in L^1(T)$. Now, let $(\epsilon_i)_{i\in\N}$ be a  sequence of positive numbers  such that $\epsilon_i\int_T\varphi_i< 2^{-i}$ for all $i\in\N$. Hence, we define $\varphi := \sum_{i\in\N} \epsilon_i\varphi_i$, which is continuous, integrable and positive.

\noindent {\bf Claim~3:} \emph{Let $\phi\colon T\to \R$ an upper bounded function such that there exists an open set $V$, such that $\supp{\phi}\subset V$ and $\mu(\cl(V))<\infty$. Then $\Dc{\phi}\neq \emptyset$.}\newline
\noindent \emph{Proof of Claim 3}: We set $F := \operatorname{supp}(\phi)$. Then, using \cite[Urysohn's Lemma 4.15]{MR1681462}, there exists $\ell\colon T\to [0,1]$ continuous such that $\ell|_{F}=1$ and $\ell|_{V^c}=0$.  It is clear that $\operatorname{supp}(\ell)\subset \cl(V)$ and, since $\mu(\cl(V))<\infty$, $\ell\in L^1(T)$. We define $M := \sup_{t\in T}\phi(t)<\infty$, and we consider the function $\varphi$ obtained in the Claim 2. We have that the function $g(t):=M\cdot \ell(t) + \varphi(t)$ is continuous, integrable and $g>\phi$. Therefore,  $g\in \Dc{\phi}$.

Next, we consider the open cover $(U_i)_{i\in\N}$ considered in Claim 1. Since $\m{f}$ is usc, for all $n\in\N$, the set $V_n = \{t\in T\colon m_f(t)<n\}$ is an open set. Thus, we can define $O_n:= U_n\cap V_n$, and, since $(U_n)_{n\in\N}$ is a increasing sequence of sets,  $T = \bigcup_{n\in\N} O_n$.  Let us consider a partition of unity $\{\varphi_i\colon T\to [0,1]\}_{i\in\N}$ subordinated to $\{O_i\}_{i\in\N}$ with  $\sum_{i\in\N}\varphi_i(t) = 1$ for all $t\in T$, $\operatorname{supp}(\varphi_i)\subset O_i$ for all $i\in\N$. Hence,  $\{\operatorname{supp}(\varphi_i)\}_{i\in\N}$ is a locally finite covering of $T$ and $\varphi_i$ is continuous for all $i\in\N$. If $p := \max\{\m{f},0\}$, we define $\psi_i := p\cdot \varphi_i$ for all $i\in \N$. It is clear that $\supp{\psi_i}\subset \supp{\varphi_i}\subset O_i$ and $\psi_i(t)\leq i$ for all $t\in T$. Hence, $\psi_i$ is upper bounded,  $\supp{\psi_i}\subset U_i$ and $\mu(\cl{U_i})<\infty$. Now, by Claim 3,  $\Dc{\psi_i}\neq\emptyset$. Moreover, by the Lemma \ref{second_lemma}, for all $i\in\N$, there exists $\beta_i\in \Dc{\psi_i}$ such that $$\int_T \beta_i(t)\dmu\leq \int_T\psi_i(t)\dmu + 2^{-i}.$$ Define $\beta:=\sum_{i\in \N} \beta_i$ and, since $0\leq\psi_i<\beta_i$ and $\beta_i$ is continuous, $\beta$ is lsc. Then, we have that 
$$ 
\m{f}(t)\leq p(t) = p(t)\sum_{i\in \N} \varphi_i(t)=\sum_{i\in\N}\psi_i(t)<\beta(t) \textrm{ and }\sum_{i\in\N}\psi_i = p.
$$
Moreover, $\int_T \beta(t)\dmu\leq \int_T p(t)\dmu + 1<\infty$. Hence, $\beta$ is an integrable and lsc function such that $\m{f}<\beta$. Finally, by using \cite[Chap VIII, Sec. 5]{MR0478089}, there is $h\colon T\to\R$ continuous such that $\m{f}<h<\beta$, which implies that $h\in \Dc{\m{f}}$.
\end{proof}

\section{Lusin integrands and basic properties}\label{sect-4}

In this section, inspired by Lusin's theorem, we introduce the notion of Lusin integrand and multifunction. We provide several properties for this class of functions.  Moreover, we study the stability of this notion under typical operations  in optimization and control theory.

Let $(T,\mathcal{A},\mu)$ be a measurable space. We say that a function $f\colon T\times X\to \Rex$ is a \emph{Lusin integrand} if  for every $\varepsilon>0$ and  $A\in \mathcal{A}$ of finite measure, there exists a compact set $B\subset A$ with  $\mu(A \setminus B) < \epsilon$  such that  $B\ni t \tto \epi f_t$ is lsc. Furthermore, we say that $(f^i)_{i\in I}$ is a \emph{uniform Lusin family}  if for every $\varepsilon>0$ and  $A\in \mathcal{A}$ of finite measure, there exists a compact set $B\subset A$ with  $\mu(A \setminus B) < \epsilon$  such that  $B\ni t \tto \epi f_t$ is lsc, for all $i\in I$. 

The notion of Lusin integrand can be extended to the case of multifunctions. We say that $C\colon T\tto X$ is a \emph{Lusin multifunction} if for every $\epsilon>0$ and $A\in \mathcal{A}$ of finite measure, there exists a compact set $B\subset A$ with  $\mu(A \setminus  B) < \epsilon$  such that  $B\ni t \tto C(t)$ is lsc. Furthermore, we say that $(C_i)_{i\in I}$ is a \emph{uniform  family of Lusin multifunctions} if  for every every $\epsilon>0$ and $A\in \mathcal{A}$ of finite measure, there exists a compact set $B\subset A$ with  $\mu(A \setminus  B) < \epsilon$  such that  $B\ni t \tto C(t)$ is lsc, for all $i\in I$. 

From the above definitions, it follows that $(C_i)_{i\in I}$ is a uniform  family of Lusin multifunctions if and only if $\{(t,x)\mapsto \delta_{C_i(t)}(x)\}_{i\in I}$, $i\in I$ is a uniform Lusin family (see Proposition \ref{prop_un_lu} below).

The following proposition provides basic properties of Lusin integrands.
\begin{proposition}\label{lemma_lusin_int}
The following assertions hold:
\begin{enumerate}[label=(\alph*), ref=\ref{lemma_lusin_int}-(\alph*)]
    \item 
    Let $f,g \colon  T\times X \to \R\cup \{ +\infty\}$ be two Lusin integrands. Then, the inf-convolution  $ f\square g\colon (t,x)\mapsto \inf_{y\in X}\{ f(y) + g(x-y)\}$ is a Lusin integrand. 

    \item\label{lus_c}  Let $f\colon T\times X\to\Rex$ be a Lusin integrand, then $\cl(f)$ is a Lusin integrand. 
    
    \item 
    Let $f \colon T\times X\to \R$ be a simple function, i.e., $f_t(x) = \sum_{k=1}^\infty a_i(t) \varphi_i(x)$, for $a_i \colon T \to \R$ measurable and $\varphi_i \colon X \to \R$ continuous. If for every compact set $K\subset T$ and $x\in X$, the above series converges uniformly over $K$ (for $x$ fixed), then $f$ is a Lusin integrand.
    \end{enumerate}
  
\end{proposition}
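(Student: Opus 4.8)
The three parts share a common reduction, so the plan is to isolate it first. In each case I fix $\varepsilon>0$ and $A\in\mathcal A$ of finite measure, produce for every ingredient a compact subset of $A$ on which the relevant epigraph multifunction is lsc, and then pass to a finite or countable intersection of these compacta. Such an intersection is again compact (a closed subset of a compactum), and its complement in $A$ stays below $\varepsilon$ by distributing mass geometrically (choosing the $i$-th set so that $\mu(A\setminus K_i)<\varepsilon 2^{-i-1}$). Two elementary stability facts for lsc multifunctions do most of the work, and I would record them at the outset: (i) if $\Phi_1,\Phi_2\colon B\tto Z$ are lsc into a topological vector space $Z$, then so is their pointwise Minkowski sum $\Phi_1+\Phi_2$ (use continuity of addition to split an open $V$ containing $a+b$ into $V_1+V_2$ with $a\in V_1$, $b\in V_2$, apply lsc of each summand, and recombine); and (ii) if $\Phi$ is lsc, then so is its pointwise closure $t\tto\cl{\Phi(t)}$ (a point of $\cl\Phi(t)$ lying in an open $V$ forces a genuine point of $\Phi(t)$ into $V$, and lsc of $\Phi$ transfers).

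For part (b) this is essentially the whole proof: choosing $B$ from the Lusin property of $f$ so that $B\ni t\tto\epi f_t$ is lsc, fact (ii) gives that $B\ni t\tto\cl{(\epi f_t)}=\epi(\cl f)_t$ is lsc, which is exactly the Lusin property for $\cl f$.

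For part (a) I would take $B=B_1\cap B_2$, with $B_1,B_2$ coming from the Lusin property of $f$ and $g$, so that both $t\tto\epi f_t$ and $t\tto\epi g_t$ are lsc on $B$; by fact (i) the sum $t\tto\epi f_t+\epi g_t$ is lsc on $B$. The remaining link is the inclusion $\epi f_t+\epi g_t\subset\epi(f_t\square g_t)$ (immediate from the definition of inf-convolution), together with the observation that the reverse inclusion can fail only along non-attained boundary heights. This is the one delicate point, and I would handle it by a \emph{nudge up} in the real coordinate: given an open $V$ meeting $\epi(f_{\bar t}\square g_{\bar t})$ at $(x,\alpha)$, openness of $V$ lets me pick $\alpha'>\alpha$ with $(x,\alpha')\in V$; since $\alpha'>(f_{\bar t}\square g_{\bar t})(x)$ the defining infimum is beaten, so $(x,\alpha')\in\epi f_{\bar t}+\epi g_{\bar t}$. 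Lower semicontinuity of the sum then yields a neighborhood $U$ of $\bar t$ with $(\epi f_s+\epi g_s)\cap V\neq\emptyset$ for $s\in U$, and the inclusion into $\epi(f_s\square g_s)$ finishes it.

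For part (c) the key observation — which I expect to be the crux — is that lower semicontinuity of $t\tto\epi f_t$ requires only continuity (indeed, upper semicontinuity would suffice) of $t\mapsto f_t(x)$ for each fixed $x$, with no regularity in $x$ whatsoever. The same nudge-up argument shows this: at $(x,\alpha)\in\epi f_{\bar t}\cap V$ one raises $\alpha$ to some $\alpha'>f_{\bar t}(x)$ while staying in $V$, and continuity of $s\mapsto f_s(x)$ at $\bar t$ keeps $f_s(x)<\alpha'$ for $s$ near $\bar t$, so $(x,\alpha')\in\epi f_s\cap V$. To obtain this continuity on a large compact set I would apply Lusin's theorem (Proposition \ref{lusin} with $X=\R$) to each scalar measurable function $a_i$, getting compact $K_i\subset A$ with $\mu(A\setminus K_i)<\varepsilon 2^{-i-1}$ and $a_i|_{K_i}$ continuous, and set $B=\bigcap_i K_i$. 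On the compact set $B$ every $a_i$ is continuous, hence each partial sum $t\mapsto\sum_{i=1}^n a_i(t)\varphi_i(x)$ is continuous; the standing hypothesis that the series converges uniformly over compacta (for fixed $x$) then exhibits $t\mapsto f_t(x)$ as a uniform limit of continuous functions on $B$, hence continuous. The observation above gives that $B\ni t\tto\epi f_t$ is lsc, so $f$ is a Lusin integrand.
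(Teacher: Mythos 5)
Your proposal is correct, and parts (b) and (c) follow the paper's own proofs essentially verbatim: (b) is the observation that passing to the closure does not change which open sets a value meets, and (c) is exactly the paper's argument — Lusin's theorem applied to each coefficient $a_i$ with geometric control $\epsilon 2^{-i}$, intersection of the resulting compacta, continuity of $t\mapsto f_t(x)$ there via uniform convergence of the continuous partial sums, and then raising the height inside the open interval (the paper's choice of $\beta\in I$ with $\beta>\hat\alpha$; your remark that usc in $t$ would suffice is accurate but unused). Part (a) is where you diverge in packaging, though not in substance: the paper first proves that $t\tto\epi h_t$ is lsc if and only if $t\tto\epi_s h_t:=\{(x,\alpha)\colon h_t(x)<\alpha\}$ is lsc (both meet exactly the same open sets), then uses the exact identity $\epi_s(f\square g)_t=\epi_s f_t+\epi_s g_t$ and cites \cite[Proposition 2.59]{MR1485775} for lower semicontinuity of the Minkowski sum; you stay with ordinary epigraphs, keep only the trivial inclusion $\epi f_t+\epi g_t\subset\epi(f\square g)_t$, bridge the possible gap at non-attained heights by your nudge-up, and prove the Minkowski-sum stability yourself from continuity of addition. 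The nudge-up is precisely the content of the paper's equivalence claim in disguise, so the two routes are equivalent; yours buys self-containedness (no strict-epigraph bookkeeping, no external citation), while the paper's isolates a reusable lemma and reduces to a known result. One step you should spell out: when $\alpha'$ strictly beats the infimum, a witness $y$ gives $f_{\bar t}(y)+g_{\bar t}(x-y)<\alpha'$ with both summands finite (the codomain excludes $-\infty$), so the splitting $\alpha'=f_{\bar t}(y)+\bigl(\alpha'-f_{\bar t}(y)\bigr)$ exhibits $(x,\alpha')\in\epi f_{\bar t}+\epi g_{\bar t}$; with that made explicit, your argument is complete.
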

 \begin{proof}
       \textit{(a)} We start the proof with a claim. \\
       {\bf Claim~1:} \emph{Let $h\colon T\times X\to\R\cup\{+\infty\}$ and $B\subset T$.  Then $B\ni t\tto \epi h_t$ is lsc if and only if  $B\ni t\tto \epi_s h_t:=\{ (x,\alpha)\colon h(t,x)<\alpha \}$ is lsc.} \newline
\noindent \emph{Proof of Claim 1}: Let $U\times I$ be an open set of $X\times \R$. It is clear that 
$$
\{ t\in B\colon\epi g_t\cap(U\times I)\neq\emptyset \} = \{ t\in B\colon\epi_s g_t\cap(U\times I)\neq\emptyset \},
$$ which proves the claim.\\ 
        Now, let $\epsilon >0$ and $A\in \mathcal{A}$ of finite measure. Let $B_1,B_2\subset A$ two compact sets $B_1,B_2\subset A$ such that $\mu(A\setminus B_1)<\epsilon/2$ and $\mu(A\setminus B_2)<\epsilon/2$ so that $B_1\ni t\tto \epi f_t$ and $B_2\ni t\tto \epi g_t$ are lsc. If $B:=B_1\cap B_2$, then we have that $\mu(A\setminus B)<\epsilon$. Moreover, it is known that $ \epi_s (f\square g)_t  = \epi_s f_t + \epi_s g_t$. Hence, by virtue of Claim 1, we have that $B\in t\tto \epi_s f_t$ and $B\ni t\tto \epi_s g_t$ are lsc.  Moreover, due to \cite[Proposition 2.59]{MR1485775}, the map $B\ni t\tto \epi_s (f\square g)_t$ is lsc. Finally, again by Claim 1, we conclude that $B\ni t\tto \epi (f\square g)_t$ is lsc.
     
     \textit{(b)} By definition of $\cl(f)$, we have that $\epi \cl(f) = \cl(\epi f)$. Moreover,  for any open set $V \subset X\times \R$
     \begin{equation*}
         \{t \colon\epi f_t\cap V \neq\emptyset\} = \{t\colon\cl(\epi f_t)\cap V\neq\emptyset\}
         = \{t\colon\epi \cl(f_t)\cap V\neq\emptyset\},
     \end{equation*}
     which proves that $\cl f$ is a Lusin integrand.
     
     \textit{(c)} Let $\epsilon>0$ and $A\in \mathcal{A}$ of finite measure. By Proposition \ref{lusin}, for all $n\in\mathbb{N}$, there exists a compact set $K_n\subset A$ with  $\mu(A\setminus K_n)<\epsilon \cdot 2^{-n}$ such that $a_n\colon K_n\to\R$ is continuous. We set $K=\bigcap_{n\in \N}K_n$ for which $\mu(A\setminus K)<\epsilon$. We proceed to prove that $K\ni t\tto \epi f_t$ is lsc. Indeed, let $U\times I$ be an open set of $X\times \R$ and take $\hat{t}\in K$ such that there exists $(\hat{x},\hat{\alpha})\in U\times I$ and $f_{\hat{t}}(\hat{x})\leq \hat{\alpha}$. By assumption, the map  $K\ni t\mapsto f_t(\hat{x})$ is continuous. Since $I$ is open, there exists $\beta\in I$ such that $\beta>\hat{\alpha}$ and, by continuity, we can find a neighborhood $V$ of $\hat{t}$ such that $f_t(\hat{x})<\beta$ for all $t\in V$, which shows that $\{ t\in T\colon\epi f_t\cap (U\times I)\neq \emptyset \}$ is an open set.
    \end{proof}
    
   Let $(T,\mathcal{A},\mu)$ be a measurable space. We say that $F \colon T \times X \tto Y$ is a \emph{Scorza-Dragoni}  map if for every $\epsilon>0$ and $A\in \mathcal{A}$ of finite measure there exists a compact set $B\subset A$ with $\mu(A \setminus  B) < \epsilon$ and  such that  $F \colon B \times X \to Y$  is lsc with closed graph. In particular,  we say that $f \colon T \times X \to Y$ is a  \emph{Scorza-Dragoni function}  if the set-valued mapping $F(t,x):=\{ f_t(x)\}$ is a  Scorza-Dragoni  map, i.e., for every $\epsilon>0$ and   $A\in \mathcal{A}$ of finite measure  there exists a compact set $B\subset A $ with  $\mu(A \setminus  B) < \epsilon$ such that   $f \colon B \times X \to Y$  is continuous. It is important to emphasize that the above notion is a stronger than the Lusin integrand property, where it is only required that $t\tto \epi f_t$ is lsc.

\begin{remark}
	The Scorza-Dragoni  property has been used in several contexts.  From the best of our knowledge,  it appeared first in \cite{MR915076} in the context of continuous real functions. Then, similar results were obtained for set-valued maps (see, e.g., \cite{MR275484,MR430205,MR1491362})). Moreover, some authors use the terminology of Lusin' types theorems, due to the connection with the classical Lusin's Theorem. We refer to  \cite{MR616201,MR1121606,MR851255,MR915076} for further results and discussions.
\end{remark}
The next result provides some relations between Lusin and Scorza-Dragoni functions.
\begin{proposition}\label{binary_prop}
      Let $f \colon T\times X\to \Rex$ be a Lusin integrand and $g \colon T \times X\to \Rex$ a Scorza-Dragoni function. Let $F\colon\R\times \R\to \R$ be an usc function such that,   for all $x\in \R$, the map $u\mapsto F(u ,x )$ is nondecreasing. Then, the map  $(t,x)\mapsto F(f_t(x), g_t(x))$ is a Lusin integrand. Consequently, $f+g$ and $f-g$ are Lusin integrands. Moreover, if $g$ takes nonnegative values, then $f\cdot g$ is also a Lusin integrand.
\end{proposition}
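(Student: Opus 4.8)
The plan is to fix $\varepsilon>0$ and $A\in\mathcal{A}$ of finite measure and produce a single compact set $B\subset A$ on which the composite epigraph multifunction is lsc. Since $f$ is a Lusin integrand and $g$ is Scorza--Dragoni, choose compact sets $B_1,B_2\subset A$ with $\mu(A\setminus B_1)<\varepsilon/2$ and $\mu(A\setminus B_2)<\varepsilon/2$ such that $B_1\ni t\tto\epi f_t$ is lsc and $g\colon B_2\times X\to\Rex$ is continuous, and set $B:=B_1\cap B_2$, so $B$ is compact and $\mu(A\setminus B)<\varepsilon$. Writing $h_t(x):=F(f_t(x),g_t(x))$, it then suffices to prove that $B\ni t\tto\epi h_t$ is lsc.

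The core of the argument is a pointwise lsc verification combining the three hypotheses. Fix $\bar t\in B$ and an open box $U\times I\subset X\times\R$ with a point $(\bar x,\bar\alpha)\in\epi h_{\bar t}\cap(U\times I)$; put $u_0:=f_{\bar t}(\bar x)$, $v_0:=g_{\bar t}(\bar x)$, so $F(u_0,v_0)\le\bar\alpha$. Since $I$ is open, pick $\beta\in I$ with $\beta>\bar\alpha\ge F(u_0,v_0)$, and by upper semicontinuity of $F$ at $(u_0,v_0)$ take $\eta>0$ with $F(u,v)<\beta$ whenever $|u-u_0|<\eta$ and $|v-v_0|<\eta$. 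The key point is that the place where $\epi f_t$ meets a box need not be near $\bar x$, so I first shrink the $x$-window: by continuity of $g$ on $B\times X$ there are a neighbourhood $W_2$ of $\bar t$ in $B$ and an open ball $U_1\subset U$ around $\bar x$ with $|g_t(x)-v_0|<\eta$ for all $t\in W_2$, $x\in U_1$. Now $(\bar x,u_0)\in\epi f_{\bar t}\cap\bigl(U_1\times(u_0-\eta,u_0+\eta)\bigr)$, so lsc of $t\tto\epi f_t$ yields a neighbourhood $W_1$ of $\bar t$ in $B$ such that for every $t\in W_1$ there exist $x_t\in U_1$ and $\alpha_t<u_0+\eta$ with $f_t(x_t)\le\alpha_t$, whence $f_t(x_t)<u_0+\eta$.

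For $t\in W:=W_1\cap W_2$ set $\tilde u:=\max\{f_t(x_t),u_0\}$, so $f_t(x_t)\le\tilde u$ and $|\tilde u-u_0|<\eta$; here monotonicity is decisive, since $f_t(x_t)$ may lie far below $u_0$. By the nondecreasing dependence of $F$ on its first argument and the choice of $\eta$,
\begin{align*}
h_t(x_t)=F(f_t(x_t),g_t(x_t))\le F(\tilde u,g_t(x_t))<\beta,
\end{align*}
because $|\tilde u-u_0|<\eta$ and $|g_t(x_t)-v_0|<\eta$. Thus $(x_t,\beta)\in\epi h_t\cap(U\times I)$ for all $t\in W$, which proves that $B\ni t\tto\epi h_t$ is lsc and hence that $h$ is a Lusin integrand. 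I expect the main obstacle to be precisely this interplay: lower semicontinuity of $\epi f$ only controls \emph{where} a near-minimizer sits in $X$, so one is forced to shrink $U$ to keep $g_t(x_t)$ close to $v_0$, while the gap between $f_t(x_t)$ and $u_0$ on the low side is absorbed by monotonicity together with the upper estimate on $F$. The infinite values $\pm\infty$ of $f$ and $g$ are handled by the same scheme after replacing $u_0$ by a finite threshold $M\ge f_{\bar t}(\bar x)$ with $F(M,v_0)<\beta$, furnished by the monotone usc extension of $F$ to $\Rex\times\Rex$.

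Finally, the stated consequences follow by specialization. Taking $F(u,v)=u+v$ and $F(u,v)=u-v$ --- both continuous, hence usc, and nondecreasing in $u$ --- shows that $f+g$ and $f-g$ are Lusin integrands. For the product, since $g\ge0$ one has $f_t\cdot g_t=F(f_t,g_t)$ for $F(u,v):=u\cdot\max\{v,0\}$, which is continuous and nondecreasing in $u$ for \emph{every} $v$ (as $\max\{v,0\}\ge0$); applying the main statement to this $F$ shows that $f\cdot g$ is a Lusin integrand.
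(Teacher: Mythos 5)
Your proof is correct and follows essentially the same route as the paper: after intersecting the two compact sets given by the Lusin and Scorza--Dragoni properties, you verify lower semicontinuity of $t\tto \epi F(f_t(\cdot),g_t(\cdot))$ pointwise, using upper semicontinuity of $F$ to fix a window around $(u_0,v_0)$, continuity of $g$ to shrink the $x$-neighbourhood, lsc of $\epi f$ to produce the point $x_t$, and monotonicity of $F$ in its first argument to absorb the gap below $u_0$ --- exactly the paper's chain $F(f_t(x),g_t(x))\leq F(\alpha,g_t(x))<\hat{\alpha}$, with your $\tilde u=\max\{f_t(x_t),u_0\}$ playing the role of the paper's $\alpha$. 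Your explicit choices of $F$ for the consequences (in particular $F(u,v)=u\cdot\max\{v,0\}$ for the product) simply make precise the specializations the paper leaves implicit, and your remark on extended values addresses a point the paper's proof also glosses over.
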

\begin{proof}
    Let $\epsilon >0$ and $A\in \mathcal{A}$ of finite measure. Then, there exists a compact set $K\subset A$ with $\mu(A\setminus K)<\epsilon$ such that $K\ni t\tto \epi f_t$ is lsc and $g|_{K\times X}$ is continuous. We proceed to prove that  $K\ni t\tto \epi F(f_t(\cdot),g_t(\cdot))$ is lsc. Indeed, let $U\times I$ be an open set of $X\times \R$ and take $\hat{t}\in K$ such that there exists $(\hat{x},\hat{\alpha})\in U\times I$ and $F(f_{\hat{t}}(\hat{x}),g_{\hat{t}}(\hat{x}))<\hat{\alpha}$.  By the usc of $F$, there exists $J_1\times I_1$, neighborhood of $(f_{\hat{t}}(\hat{x}),g_{\hat{t}}(\hat{x}))$, such that $F(s,t)<\hat{\alpha}$ for all $(s,t)\in J_1\times I_1$. Moreover, since $g$ is continuous at $(\hat{t},\hat{x})$, there exists a neighborhood $V\times U_1$ of $(\hat{t},\hat{x})$ such that $g_t(x)\in J_1$ for all $(t,x)\in V\times U_1$, where $U_1\subset U$ and, since $K\ni t\tto \epi f_t$ is lsc,  $V_1 := \{ t\in T\colon \epi f_t\cap U_1\times I_1\neq\emptyset \}$ is a neighborhood of $\hat{t}$ in $K$. Now, if $t\in V\cap V_1$, then there exists $(x,\alpha)\in U_1\times I_1$ such that $f_t(x)\leq \alpha$ and $g_t(x)\in J_1$. Hence,  $F(f_t(x),g_t(x))\leq F(\alpha,g_t(x))<\hat{\alpha} $. Therefore, $\{ t\in T\colon\epi F(f_t(\cdot),g_t(\cdot))\cap (U\times I)\neq \emptyset \}$ is an open set of $K$, which implies that $K\ni t\tto \epi F(f_t(\cdot),g_t(\cdot))$ is lsc.
    \end{proof}
For uniform Lusin families, we have the following proposition.
\begin{proposition}\label{prop_un_lu}
The following assertions hold:
      \begin{enumerate}[label=(\alph*), ref=\ref{prop_un_lu}-(\alph*)]
          \item  \label{prop_lus_a} Let $\{f^i\colon T\times X\to\Rex\}_{i\in I}$ be a uniform Lusin family. Then, $f_t(x): =\inf_{i\in I} f^i_t(x)$ is a Lusin integrand. 
          \item    \label{prop_lus_b} Let $\{f^i\colon T\times X\to\Rex\}_{i\in I}$ be a uniform Lusin family. If $g\colon T\times X\to\Rex$ is a Scorza-Dragoni function, then $\{ f^i-g \}_{i\in I}$ is a uniform Lusin family.
          \item \label{prop_lus_c} Assume that $f^i \colon T\times X \to \Rex$ are Lusin integrand with respect to $\mu$, for all $i\in\N$. Then $(f^i)_{i\in\mathbb{N}}$ is a uniform Lusin family with respect to $\mu$.
          \item\label{prop_lus_d} The set-valued maps $(C_i\colon T\tto X)_{i\in I}$ are a uniform[$(d)$]  family of Lusin multifunctions if and only if $\{(t,x)\mapsto\delta_{C_i(t)}(x)\}_{i\in I}$ is a uniform Lusin family. 
      \end{enumerate}
\end{proposition}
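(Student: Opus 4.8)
The plan is to handle the four assertions one by one, reducing each to the single-integrand results already proved and to two elementary stability properties of lower semicontinuous multifunctions: an arbitrary union of lsc maps is lsc, and the restriction of an lsc map to a subset is lsc for the subspace topology. In every case I would fix $\epsilon>0$ and $A\in\mathcal{A}$ of finite measure and, using the hypotheses, produce a \emph{single} compact set $B\subset A$ with $\mu(A\setminus B)<\epsilon$ on which the required lower semicontinuity holds simultaneously for all indices. For \textit{(a)}, I would first use the uniform Lusin hypothesis to obtain one compact $B$ making $B\ni t\tto \epi f^i_t$ lsc for every $i\in I$. Passing to strict epigraphs via Claim 1 in the proof of Proposition \ref{lemma_lusin_int}, I get $B\ni t\tto \epi_s f^i_t$ lsc for all $i$. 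The key identity is $\epi_s f_t=\bigcup_{i\in I}\epi_s f^i_t$, which holds because $\inf_{i}f^i_t(x)<\alpha$ exactly when $f^i_t(x)<\alpha$ for some $i$; since an arbitrary union of lsc multifunctions is lsc, $B\ni t\tto \epi_s f_t$ is lsc, and Claim 1 converts this back to lsc of $B\ni t\tto \epi f_t$, so $f$ is a Lusin integrand.

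For \textit{(b)}, I would intersect two good sets: a compact $B_1$ from the uniform Lusin property of $\{f^i\}_{i\in I}$, with $\mu(A\setminus B_1)<\epsilon/2$, and a compact $B_2$ from the Scorza-Dragoni property of $g$, with $\mu(A\setminus B_2)<\epsilon/2$ and $g|_{B_2\times X}$ continuous. On $B:=B_1\cap B_2$ the argument of Proposition \ref{binary_prop}, applied with $F(u,v)=u-v$ (continuous, hence usc, and nondecreasing in its first argument), shows that $B\ni t\tto \epi(f^i_t-g_t)$ is lsc; since $B$ does not depend on $i$, the family $\{f^i-g\}_{i\in I}$ is uniform Lusin.

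For \textit{(c)}, I would run the standard $\epsilon\,2^{-n}$ device: for each $n\in\N$ choose a compact $B_n\subset A$ with $\mu(A\setminus B_n)<\epsilon\,2^{-n}$ on which $B_n\ni t\tto \epi f^n_t$ is lsc, and set $B:=\bigcap_{n\in\N}B_n$. Then $B$ is compact and $\mu(A\setminus B)\le\sum_{n}\mu(A\setminus B_n)<\epsilon$; since $B\subset B_n$ and lower semicontinuity is inherited by restriction to the subspace $B$, each $B\ni t\tto \epi f^n_t$ is lsc, giving the uniform Lusin property. For \textit{(d)}, the heart is the pointwise equivalence, for one fixed multifunction $C$, between lsc of $t\tto C(t)$ and lsc of $t\tto \epi\delta_{C(t)}=C(t)\times[0,\infty)$: in the forward direction I test a basic open set $U\times J$ and reuse a single ordinate $\alpha\in J\cap[0,\infty)$, while in the reverse direction I test $U\times(-1,1)$ and read off the $X$-component of the witness. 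Because this equivalence is purely pointwise and uses no particular index, one and the same compact $B$ witnesses ``$B\ni t\tto C_i(t)$ lsc for all $i$'' and ``$B\ni t\tto \epi\delta_{C_i(t)}$ lsc for all $i$'', which is precisely the asserted equivalence of the two uniform families.

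The steps are largely routine; the only places that require genuine care are the strict-epigraph identity together with the union-of-lsc fact in \textit{(a)} (where $I$ may be uncountable, so one must avoid any countability-based argument), and in \textit{(d)} the correct identification $\epi\delta_{C(t)}=C(t)\times[0,\infty)$ together with the choice of test sets that makes the equivalence run in both directions. The measure-theoretic bookkeeping---extracting a single compact set valid for all indices---is immediate from the hypotheses in \textit{(a)}, \textit{(b)}, \textit{(d)}, and follows from the summable tolerances $\epsilon\,2^{-n}$ in \textit{(c)}.
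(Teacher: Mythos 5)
Your proposal is correct and takes essentially the same approach as the paper: a single compact set witnessing lower semicontinuity simultaneously for all indices, the extra-room/strict-epigraph trick for the infimum in \textit{(a)}, the $\epsilon\,2^{-n}$ intersection in \textit{(c)}, and the pointwise identity $\epi\delta_{C(t)}=C(t)\times[0,+\infty)$ in \textit{(d)}. The only differences are cosmetic and sound: you modularize \textit{(a)} through the strict-epigraph equivalence (Claim~1 in the proof of Proposition \ref{lemma_lusin_int}) and the union-of-lsc fact, where the paper inlines the same idea by picking $\hat\beta>\hat\alpha$ in $J$; in \textit{(b)} you reuse the argument of Proposition \ref{binary_prop} with $F(u,v)=u-v$ where the paper rederives it by hand; and in \textit{(d)} you actually supply the reverse implication that the paper dismisses as ``similar''.
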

\begin{proof}
    \textit{(a)} Let $A\in \mathcal{A}$ with $\mu(A)<\infty$ and $\epsilon >0$. As $\{f^i\}_{i\in I}$ is a uniform Lusin family, we have that there exists a compact set $K\subset A$ with $\mu(A\setminus K)<\epsilon$ such that $K\ni t\tto \epi f^i(t,\cdot)$ is lsc for all $i\in I$. We claim that $K\ni t\tto \epi f_t$ is lsc. Indeed, let $U\times J\subset X\times \R$ be an open set. We observe that if $\hat{t}\in K$ is such that there exists $(\hat{x},\hat{\alpha})\in U\times J$ with $f_{\hat{t}}(\hat{x})\leq\hat{\alpha}$, we can choose $\hat{\beta}\in J$ with $\hat{\beta}>\hat{\alpha}$. So,  $f_{\hat{t}}(\hat{x})<\hat{\beta}$ and there exists $i\in I$ such that $f^i_{\hat{t}}(\hat{x})<\hat{\beta}$. Moreover, since $K\ni t\tto \epi f^i(t,\cdot)$ is lsc, the set $V=\{ t\in K\colon \epi f^i_t\cap (U\times J)\neq\emptyset \}\in \mathcal{N}_{\hat{t}}$. Also, it is clear that $V\subset \{ t\in K\colon\epi f_t\cap (U\times J)\neq\emptyset \}$, which shows that $K\ni t\tto \epi f_t$ is lsc.
    
    \textit{(b)} Let $\epsilon >0$ and $A\in \mathcal{A}$ of finite measure. As $\{f^i\}_{i\in I}$ is a uniform Lusin family, there exists a compact set $K\subset A$ with $\mu(A\setminus K)<\epsilon$ such that $K\ni t\tto \epi f^i_t(\cdot)$ is lsc for all $i\in I$ and $g|_{K\times X}$ is continuous. We claim that $K\ni t\tto \epi(f^i_t(\cdot)-g_t)$ is lsc for all $i\in I$. Indeed, let $U\times J\subset X\times \R$ be an open set. We observe that if $\hat{t}\in K$ is such that there exists $(\hat{x},\hat{\alpha})\in U\times J$ with $f^i_{\hat{t}}(\hat{x})-g_{\hat t}(\hat x)\leq\hat{\alpha}$, then we can choose $\hat{\beta}\in J$ with $\hat{\beta}>\hat{\alpha}$. Hence,  $f^i_{\hat{t}}(\hat{x})-g_{\hat t}(\hat x)<\hat{\beta}$ and then there exists $\eta\in\R$ such that $f^i_{\hat{t}}(\hat{x})<\eta<g_{\hat t}(\hat x)+\hat{\beta}$. By  continuity, there exists a neighborhood $V\times U_1$ of $(\hat t,\hat x)$ with $U_1\subset U$ such that $g_t(x)>\eta-\hat \beta$ for all $(t,x)\in V\times U_1$. Moreover, as $K\ni t\tto \epi f^i_t(\cdot)$ is lsc, the set $V_1=\{ t\in K\colon\epi f^i_t\cap (V_1\times (-\infty,\eta))\neq\emptyset \}\in \mathcal{N}_{\hat{t}}$. Then, if $t\in V\cap V_1$, then there exists $(x,\alpha)\in V_1\times (-\infty,\eta)$ such that $f^i_t(x)\leq\alpha<\eta<g_t(x)+\hat\beta$. Hence, $f^i_t(x)-g_t(x)<\hat \beta$, which shows that $\{ t\in K\colon\epi (f^i_t-g_t)\cap (U\times J)\neq\emptyset \}$ is an open set for all $i\in I$.
    
    \textit{(c)} Let $\epsilon >0$ and $A\in\mathcal{A}$ of finite measure. For every $n\in\N$, let  $K_n\subset A$ be a compact set such that $\mu(K_n)< \epsilon\, 2^{-n}$ and $K_n\ni t\tto \epi f^n_t$. Then,  the set $K := \bigcap_{n\in\N} K_n$ satisfies $\mu(A\setminus K)<\epsilon$ and for all $n\in\N$, $K\ni t\tto \epi f^n_t$, which proves the result.
    
    \textit{(d)} If $(C_i\colon T\tto X)_{i\in I}$ is a uniform family of Lusin multifunctions,  then $\epi\delta_{C_i(t)} = C_i(t)\times [0,+\infty)$ for all $i\in I$. Let $\epsilon>0$ and $A\in\mathcal{A}$ of finite measure. Then,  there exists a compact set $B\subset A$ with $\mu(A\setminus B)<\epsilon$ and such that $K\ni t\tto C_i(t)$ is lsc for all $i\in I$. Let $i\in I$ and $U\times J$ be an open set of $X\times \R$, then  $\{t\in K\colon \epi \delta_{C_i(t)}\cap (U\times J)\neq\emptyset\}$ is open in $K$. Indeed, $\{t\in K\colon \epi \delta_{C_i(t)}\cap (U\times J)\neq\emptyset\}=\{t\in K\colon C_i(t)\cap U\neq\emptyset\}$ if $J\cap[0,+\infty)\neq \emptyset$ and  $\{t\in K\colon \epi \delta_{C_i(t)}\cap (U\times J)\neq\emptyset\}=\emptyset$ if $J\cap[0,+\infty)= \emptyset$. Therefore, $K\ni t\tto \epi\delta_{C_i(t)}$ is lsc for all $i\in I$. The reverse implication can be proved in a similar way.
\end{proof}

The following result provides a  sufficient condition so that the value function is a Lusin intergrand. 
\begin{proposition}
    Let $F\colon T\times X\tto Y$ and $f\colon T\times X\times Y\to\R$ be a Scorza-Dragoni mapping and a Scorza-Dragoni function, respectively. Then, the value function  
    $v_t(x):= \inf\{ f(t,x,y)\colon y\in F(t,x) \}$ is a Lusin integrand. \newline Moreover, if $F$ has locally compact values, i.e., for all $(t,x)\in T\times X$, there exists a neighborhood $U$ of $(t,x)$ such that $F(U)$ is relatively compact, then  $u_t(x):= \sup \{ f(t,x,y)\colon y\in F(t,x) \}$ is a Lusin integrand.
\end{proposition}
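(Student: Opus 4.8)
The plan is to treat the infimum and the supremum separately after a common reduction. Fix $\varepsilon>0$ and $A\in\mathcal{A}$ of finite measure. Since $F$ is Scorza-Dragoni there is a compact $B_1\subset A$ with $\mu(A\setminus B_1)<\varepsilon/2$ on which $F\colon B_1\times X\tto Y$ is lsc with closed graph, and since $f$ is a Scorza-Dragoni function there is a compact $B_2\subset A$ with $\mu(A\setminus B_2)<\varepsilon/2$ on which $f\colon B_2\times X\times Y\to\R$ is continuous. Taking $B:=B_1\cap B_2$ yields a compact set with $\mu(A\setminus B)<\varepsilon$ on which both properties hold simultaneously; everything below is carried out over this fixed $B$, and the goal in each case is to show that $B\ni t\tto\epi v_t$ (resp.\ $B\ni t\tto\epi u_t$) is lsc.

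For the infimum, I would fix $\bar t\in B$ and an open rectangle $U\times I\subset X\times\R$ with $(\bar x,\bar\alpha)\in\epi v_{\bar t}\cap(U\times I)$, choose $\beta\in I$ with $\beta>\bar\alpha\ge v_{\bar t}(\bar x)$, and use $v_{\bar t}(\bar x)<\beta$ to produce $\bar y\in F(\bar t,\bar x)$ with $f(\bar t,\bar x,\bar y)<\beta$. Continuity of $f$ at $(\bar t,\bar x,\bar y)$ gives a neighbourhood $V_1\times U_1\times Y_1$ of $(\bar t,\bar x,\bar y)$ on which $f<\beta$, and the lsc of $F$ at $(\bar t,\bar x)$ (applied to the open set $Y_1$, which meets $F(\bar t,\bar x)$ at $\bar y$) gives a neighbourhood $V_2\times U_2$ of $(\bar t,\bar x)$ on which $F(\cdot,\cdot)\cap Y_1\neq\emptyset$. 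Keeping $x=\bar x$ fixed, for every $t\in V_1\cap V_2$ one may pick $y\in F(t,\bar x)\cap Y_1$, whence $v_t(\bar x)\le f(t,\bar x,y)<\beta$, so $(\bar x,\beta)\in\epi v_t\cap(U\times I)$. This establishes the lsc and hence that $v$ is a Lusin integrand.

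For the supremum the difficulty is reversed: bounding $u_t(x)=\sup_{y\in F(t,x)}f(t,x,y)$ from above requires controlling all of $F(t,x)$, so the lsc of $F$ no longer suffices and one must exploit the local compactness hypothesis. The key intermediate step, which I expect to be the main obstacle, is to upgrade the closed graph of $F$ on $B\times X$ to upper semicontinuity near any $(\bar t,\bar x)$: choosing a neighbourhood $W$ with $\cl{F(W)}$ compact, a standard sequence argument shows that closed graph together with relative compactness forces usc with compact values. Granting this, I would again fix $\bar t$, an open rectangle $U\times I$ with $(\bar x,\bar\alpha)\in\epi u_{\bar t}\cap(U\times I)$, and $\beta\in I$ with $\beta>\bar\alpha\ge u_{\bar t}(\bar x)$, so that $f(\bar t,\bar x,y)<\beta$ for every $y\in F(\bar t,\bar x)$. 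Arguing by contradiction, if no neighbourhood of $\bar t$ forced $u_t(\bar x)<\beta$, there would be $t_n\to\bar t$ in $B$ and $y_n\in F(t_n,\bar x)$ with $f(t_n,\bar x,y_n)>\beta-1/n$; local compactness extracts a subsequence $y_{n_k}\to y^\ast$, the closed graph gives $y^\ast\in F(\bar t,\bar x)$, and continuity of $f$ yields $f(\bar t,\bar x,y^\ast)\ge\beta$, contradicting the choice of $\beta$. Thus $u_t(\bar x)<\beta$ on a neighbourhood of $\bar t$, giving $(\bar x,\beta)\in\epi u_t\cap(U\times I)$ and the desired lsc.
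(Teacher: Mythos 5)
Your proposal is correct and takes essentially the same route as the paper: after intersecting the two Scorza--Dragoni compact sets into a single $B$, the paper proves lsc of $B\ni t\tto\epi v_t$ exactly via continuity of $f$ plus lsc of $F$, and handles $u$ by the same contradiction argument you give (sequence $t_n\to\hat t$, selections $y_n\in F(t_n,\hat x)$ with $f$ large, a convergent subsequence from local relative compactness, closed graph, and continuity of $f$). The only cosmetic difference is that your proposed ``upgrade to usc'' intermediate step is never actually needed---your own sequence argument, like the paper's, uses only the closed graph and relative compactness directly.
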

\begin{proof}
    Let us consider $\epsilon >0$ and $A\in \mathcal{A}$ of finite measure. By assumption,   there exists a compact set $B\subset A$ with $\mu(A\setminus B)<\epsilon$ and such that $f\colon B\times X\times Y\to \R$ is continuous and $F\colon B\times X\tto Y$ is lsc with  closed graph.\\
{\bf Claim~1:} \emph{The map $B\ni t\tto\epi v_t$ is lsc.} \newline
\noindent \emph{Proof of Claim 1}: Take $\hat{t}\in \{ t\in B\colon  \epi v_t\cap (U\cap I)\neq \emptyset \}$ where $U\times I$ is an open set of $X\times \R$.  Then, there exists $(\hat{x},\hat{\alpha})\in U\times I$ such that $v_{\hat{t}}(\hat{x})\leq \hat{\alpha}$. Hence, we can find $\hat{y}\in Y$ with $\hat{y}\in F(\hat{t},\hat{x})$ such that $f(\hat{t},\hat{x},\hat{y})\leq\hat{\alpha}$. We can choose $\beta>\hat{\alpha}$, because $I$ is open and by continuity there exists $O_1\times U_1\times V_1$, neighborhood of $(\hat{t},\hat{x},\hat{y})$, such that $f(t,x,y)<\beta$ for all $(t,x,y)\in O_1\times U_1\times V_1$ where $U_1\subset U$. On the other hand, since $F|_B$ is lsc, there exists $O_2\times U_2$ neighborhood of $(\hat{t},\hat{x})$ such that $O_2\times U_2\subset\{ (t,x)\in B\times X\colon  F(t,x)\cap V_1\neq \emptyset \}$, where $U_2\subset U$. Then, if $t\in O_1\cap O_2$, we can take $x\in U_1\cap U_2$ and find $y\in V_1$ such that $y\in F(t,x)\cap V_1$. Hence, $f(t,x,y)<\beta\Rightarrow v_t(x)<\beta$.  Therefore $O_1\cap O_2\subset \{ t\in B\colon \epi v_t\cap (U\cap I)\neq \emptyset \}$, which proves the claim. \\
{\bf Claim~2:} \emph{If $F$ has locally compact values, then $ B\ni t\tto \epi u_t $ is lsc.} \newline
\noindent \emph{Proof of Claim 2}: Indeed, let us consider an open set $U\times I$ of $X\times \R$ and assume that $D:=\{ t\in B\colon \epi u_t\cap U\times I\neq\emptyset \}$ is not open. Then, there exists $\hat{t}\in D$ such that for all neighborhood $V$ (in $B$) of $\hat{t}$, there exists $t\in V$ such that $\epi u_t\cap (U\times I)=\emptyset$. In particular, we can take $V_n = \mathbb{B}_{1/n}(\hat t)\cap B$. Hence,  for all $n\in \N$, there exists $t_n\in \mathbb{B}_{1/n}(\hat t)\cap B$ such that $\epi u_{t_n}\cap (U\times I)=\emptyset$ and it is clear that $t_n\to \hat t$. On the other hand, there exists $V_1\times U_1\in \mathcal{N}_{(\hat{t},\hat{x})}$ such that $F(V_1\times U_1)$ is relatively compact. Moreover, there exists $(\hat{x},\hat{\alpha})\in U\times I$ such that $u_{\hat{t}}(\hat{x})\leq\hat{\alpha}$ and taking $\gamma\in I$ with $\gamma>\hat{\alpha}$. we obtain that for all $n\in\N$, $u(t_n,\hat{x})>\gamma$. Then,  there exists $y_n\in F(t_n,\hat{x})$ such that $f(t_n,\hat{x},y_n)>\gamma$. Since $t_n\to \hat{t}$, there exists $N\in\N$ such that $t_n\in V_1$ if $n\geq N$, then $y_n\in F(V_1\times U_1)$ for $n\geq N$. Hence, there exists $\hat{y}\in Y$ and a subsequence $\{ y_{n_k} \}$ such that $y_{n_k}\to \hat{y}$. Since $F|_B$ has closed graph,  $\hat{y}\in F(\hat{t},\hat{x})$. However,   $f(t_{n_k},\hat{x},y_{n_k})>\gamma$ for all $k\in\N$ and by continuity $f(\hat{t},\hat{x},\hat{y})\geq \gamma>\hat{\alpha}$. Thus,  $u_{\hat{t}}(\hat{x})>\hat{\alpha}$, which is a contradiction. Therefore, $D$ is open and  $B\ni t\tto \epi u_t$ is lsc.
\end{proof}

\section{Measurable selections for admissible  integrands and multifunctions}\label{sect-5}

This section introduces a class of integrands and multifunctions which are amenable for selection theorems. The results of this section are crucial in developing calculus rules for conjugate and subdifferential of non necessarily convex integral functionals. 

We say that $f\colon T\times X\to\Rex$ is an \emph{admissible integrand} if there exists a uniform Lusin family  $(f^i\colon T\times X\to\Rex)_{i\in I}$ such that  $f_t^i\in \Gamma(X)$ for all $i\in I$ and $t\in T$ and $\epi f_t = \cl(\epi (\inf_{i\in I} f^i_t))$ for all $t\in T$.
\begin{remark}\label{lemma_inf_}
It is important to emphasize that that for two proper functions $f,g$ the condition  $\epi f = \cl(\epi g)$ is equivalent to say that $f$ coincides with the lsc envelope of $g$. Hence, if $\epi f = \cl(\epi g)$, then $\inf_{x\in X}f(x)=\inf_{x\in X}g(x)$. 
\end{remark}

\begin{remark}\label{adm_normal}
The notion of admissible integrand is related to the concept of normal integral. Recall that $f\colon T\times X\to \Rex$ is normal integrand if $f$ is $\mathcal{A}\otimes\mathcal{B}(X)$ measurable and the maps $x\mapsto f_t(x)$ are lsc for all $t\in T$ (see, e.g., \cite{MR1485775,MR1491362,MR0467310}). Hence, when $X$ is separable,  both concepts of integrand coincide. Indeed,  it is clear that that an admissible integrand is a normal integrand.  Reciprocally, by virtue of  the  Castaing representation (\cite{MR0467310}), we can find a  sequence of  measurable functions $x_n\colon D\to X$ and $\alpha_n\colon D\to\R$ such that $\epi f_t = \cl(\{(x_n(t),\alpha_n(t))\}_{n\in\N})$ for all $t\in D$, where $D = \dom (\epi f)$.  Hence, for each $n\in\N$, we can consider the function $f^n_t (x):=\alpha_n(t)+\delta_{\{x_n(t)\}}(x)$ for $t\in D$ and $f^n_t(x) = +\infty$ for $t\notin D$. It follows that  $f^n_t\in\Gamma(X)$ for all $n\in\N$.  Moreover,  it is clear that $f^n$ is a Lusin integrand. Finally,  by Proposition \ref{prop_lus_c}, $(f^n)_{n\in\N}$ is a uniform Lusin family and $\epi f_t = \cl{\epi (\inf_{n\in\N}f^n_t)}$, which proves  that  $f$ is an admissible integrand.
\end{remark}

The notion of admissible integrand can be extended to multifunctions.  We say that $C\colon T\tto X$ is an \emph{admissible multifunction} if there exists a uniform family of Lusin multifunctions $(C_i\colon T\tto X)_{i\in I}$ with closed and convex values such that $C(t) = \cl\left(\bigcup_{i\in I} C_i(t)\right)$ for all $t\in T$. 
\begin{proposition}\label{mult_adms} A multifunction $C\colon T\tto X$ is admissible if and only if $(t,x)\mapsto \delta_{C(t)}(x)$ is an admissible integrand.
\end{proposition}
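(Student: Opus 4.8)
The plan is to reduce the equivalence to two dictionaries already at hand: Proposition~\ref{prop_lus_d}, which translates a uniform family of Lusin multifunctions $(C_i)_{i\in I}$ into the uniform Lusin family of indicators $\{(t,x)\mapsto\delta_{C_i(t)}(x)\}_{i\in I}$, and the elementary epigraphical identities
\[
\inf_{i\in I}\delta_{C_i(t)}=\delta_{\bigcup_{i\in I}C_i(t)},\qquad
\epi\delta_A=A\times[0,\infty),\qquad
\cl(A\times[0,\infty))=\cl(A)\times[0,\infty),
\]
the last one because $[0,\infty)$ is closed and closure commutes with products. These identities are exactly what turns the representation $C(t)=\cl(\bigcup_{i\in I}C_i(t))$ defining admissibility of $C$ into the representation $\epi\delta_{C(t)}=\cl(\epi(\inf_{i\in I}f^i_t))$ defining admissibility of $(t,x)\mapsto\delta_{C(t)}(x)$, and conversely.

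For the implication ($\Rightarrow$) I would assume $C$ admissible and fix a uniform family of Lusin multifunctions $(C_i)_{i\in I}$ with closed convex values such that $C(t)=\cl(\bigcup_{i\in I}C_i(t))$. Setting $f^i_t:=\delta_{C_i(t)}$, each $f^i_t\in\Gamma(X)$ since $C_i(t)$ is closed and convex (with the convention $\delta_\emptyset\equiv+\infty$, which absorbs empty values). Proposition~\ref{prop_lus_d} yields that $(f^i)_{i\in I}$ is a uniform Lusin family, and the identities above give $\inf_{i\in I}f^i_t=\delta_{\bigcup_{i\in I}C_i(t)}$, whence $\cl(\epi(\inf_{i\in I}f^i_t))=\cl(\bigcup_{i\in I}C_i(t))\times[0,\infty)=C(t)\times[0,\infty)=\epi\delta_{C(t)}$. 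Thus $\delta_C$ is an admissible integrand.

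For the converse I would start from a uniform Lusin family $(f^i)_{i\in I}$ with $f^i_t\in\Gamma(X)$ and $\epi\delta_{C(t)}=\cl(\epi(\inf_{i\in I}f^i_t))=\cl(\bigcup_{i\in I}\epi f^i_t)$ (note this forces $C(t)\times[0,\infty)$, hence $C(t)$, to be closed). I would then define $C_i(t):=\cl(\dom f^i_t)$, which is closed and convex. Projecting the identity $C(t)\times[0,\infty)=\cl(\bigcup_{i\in I}\epi f^i_t)$ onto $X$ gives the representation: every $(x,\alpha)\in\epi f^i_t$ lies in $C(t)\times[0,\infty)$, so $\dom f^i_t\subset C(t)$ and hence $C_i(t)\subset C(t)$; conversely each $x\in C(t)$ is a limit of first coordinates of points of $\bigcup_{i\in I}\epi f^i_t$, i.e.\ of points of $\bigcup_{i\in I}\dom f^i_t$. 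Taking closures yields $C(t)=\cl(\bigcup_{i\in I}\dom f^i_t)=\cl(\bigcup_{i\in I}C_i(t))$.

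The step I expect to be the crux is checking that $(C_i)_{i\in I}$ is a uniform family of Lusin multifunctions. On a compact set $B$ produced by the uniform Lusin family $(f^i)$, all maps $B\ni t\tto\epi f^i_t$ are lsc simultaneously; I would then argue that lower semicontinuity of a set-valued map is preserved both under the projection $\pi_X\colon X\times\R\to X$ (since $\pi_X(\epi f^i_t)=\dom f^i_t$ and meeting an open $V\subset X$ is tested against the open set $V\times\R$) and under taking closures (since $\cl\Psi(t)$ meets an open set exactly when $\Psi(t)$ does). Hence each $B\ni t\tto C_i(t)=\cl(\dom f^i_t)$ is lsc on the same $B$, so $(C_i)_{i\in I}$ is a uniform family of Lusin multifunctions and $C$ is admissible. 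The only remaining care is with degenerate values: $\dom f^i_t=\emptyset$ gives $C_i(t)=\emptyset$, and $C(t)=\emptyset$ forces all $f^i_t\equiv+\infty$; both cases are handled trivially by the conventions above.
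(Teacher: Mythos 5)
Your proof is correct, and the forward implication is exactly the paper's: Proposition \ref{prop_lus_d} together with the identities $\inf_{i\in I}\delta_{C_i(t)}=\delta_{\bigcup_i C_i(t)}$ and $\cl(A\times[0,\infty))=\cl(A)\times[0,\infty)$. In the converse you take a genuinely different construction: the paper enlarges the index set to $J=I\times(0,1]$ and builds the uniform Lusin family from strict sublevel sets, $C_j(t)=\cl(\{x\in X\colon f^i_t(x)<\varepsilon\})$ for $j=(i,\varepsilon)$, whereas you keep the index set $I$ and take the closed domains $C_i(t)=\cl(\dom f^i_t)=\cl(\pi_X(\epi f^i_t))$. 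Both work, and for the same underlying reason, which you identify: since $\delta_{C(t)}$ only sees domains, the epigraph identity $\cl(\bigcup_i\epi f^i_t)=C(t)\times[0,\infty)$ forces $f^i_t\geq 0$ and $\dom f^i_t\subset C(t)$, and projecting it onto $X$ gives $C(t)=\cl(\bigcup_i\dom f^i_t)$. Your check that lower semicontinuity of $B\ni t\tto\epi f^i_t$ passes to $B\ni t\tto\cl(\pi_X(\epi f^i_t))$ (projection tested against $V\times\R$, closure tested against the same open sets) is the exact analogue of the check the paper leaves implicit for its sublevel sets (there one tests $\epi f^i_t$ against $V\times(-\infty,\varepsilon)$), and both use the same compact sets $B$, so uniformity is preserved. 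If anything, your writeup is more complete than the paper's: the published converse stops after asserting that the $C_j$ form a uniform Lusin family of closed convex multifunctions and never records the representation $C(t)=\cl\bigl(\bigcup_j C_j(t)\bigr)$, which your projection argument supplies explicitly. The only step you use silently is $\cl(\epi(\inf_{i\in I}f^i_t))=\cl(\bigcup_{i\in I}\epi f^i_t)$; it deserves one line (any $(x,\alpha)$ with $\inf_i f^i_t(x)\leq\alpha$ is the limit as $\eta\downarrow 0$ of points $(x,\alpha+\eta)\in\epi f^{i}_t$ for suitable $i$), but it is standard and unproblematic, as is your handling of empty values via $\delta_\emptyset\equiv+\infty$.
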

\begin{proof}
    If $C\colon T\tto X$ is admissible, then $C(t) = \operatorname{cl} \left( \bigcup_{i\in I} C_i(t) \right)$ for all $t\in T$, for some uniform family of Lusin multifunctions $(C_i\colon T\tto X)_{i\in I}$. By virtue of Proposition \ref{prop_lus_d},   $((t,x)\mapsto\delta_{C_i(t)}(x))_{i\in I}$ is a uniform Lusin family.\newline  Moreover, $\epi \delta_{C(t)} = \cl{\epi(\inf_{i\in I}\delta_{C_i(t)})}$, which shows that $(t,x)\mapsto \delta_{C(t)}(x)$ is an admissible integrand.  \newline 
    Reciprocally, assume that $(t,x)\mapsto \delta_{C(t)}(x)$ is an admissible integrand.  Then, there exists a uniform Lusin family $(f^i)_{i\in I}$ such that 
    $$
    \epi\delta_{C(t)} = \cl{\epi(\inf_{i\in I}f_t^i(t,\cdot))} = C(t)\times [0,+\infty)  \text{ and }   f^i_t \in\Gamma(X) \text{ for all  }i\in I. 
    $$
    For every $j\in J :=I\times (0,1]$ we consider the multifunction ${C}_j\colon T\tto X$  defined by  ${C}_j(t) = \cl(\{x\in X\colon  f^i_t(x)< \epsilon\})$, where $j=(i,\varepsilon)$. It is clear that $C_j$ takes closed and convex values, and  $\{C_j\colon T \tto X \}_{ j \in J}$ is a uniform Lusin family of multifunctions. \smartqed
\end{proof}

The next result provides  a measurable selection theorem for sublevel sets of an admisible integrand. It can be seen as a measurable counterpart of  Lemma \ref{lemma_selection} without convexity and lower semicontinuity of  the epigraph multifunction.
\begin{theorem}\label{theorem_selection_2} 
Let $f\colon T \times X\to \Rex $ be an admissible integrand and consider  $A\in\mathcal{A}$ such that there exists a measurable function $\alpha\colon A\to\Rex$ such that $m_f(t)<\alpha(t)$ for all $t\in A$. Then, there exists a strongly measurable function $x\colon A\to X$ such that $f_t(x(t))\leq \alpha(t)$ $\mu$-a.e. in $A$.    
\end{theorem}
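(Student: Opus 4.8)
The plan is to unfold the definition of admissibility and reduce the problem, via compactness, to finitely many applications of the continuous selection of Lemma~\ref{lemma_selection}. Write $(f^i)_{i\in I}$ for a uniform Lusin family with $f^i_t\in\Gamma(X)$ and $\epi f_t=\cl(\epi g_t)$, where $g_t:=\inf_{i\in I}f^i_t$. Two elementary facts will be used repeatedly: first, $f_t\le g_t\le f^i_t$ for every $i$, so it suffices to produce, at each $t$, an index $i$ and a point $x$ with $f^i_t(x)\le\alpha(t)$; second, by Remark~\ref{lemma_inf_}, $m_f(t)=\inf_{x}g_t(x)=\inf_{i\in I}m_{f^i}(t)$. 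Since $\mu$ is $\sigma$-finite, I would partition $A$ into countably many disjoint measurable sets of finite measure and construct the selection separately on each, so there is no loss of generality in assuming $\mu(A)<\infty$.

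The heart of the argument is a \emph{local} statement: for every measurable $C\subset A$ with $\mu(C)>0$ there is a compact $K\subset C$ with $\mu(C\setminus K)<\tfrac12\mu(C)$ carrying a strongly measurable selection $x_K\colon K\to X$ with $f_t(x_K(t))\le\alpha(t)$. To build $K$ I combine the uniform Lusin property of $(f^i)_{i\in I}$ (to make all $K\ni t\tto\epi f^i_t$ lsc) with a Lusin theorem for $\alpha$ (composing with a homeomorphism $\Rex\cong[-1,1]$ and invoking Proposition~\ref{lusin}, so that $\alpha|_K$ is continuous, in particular lsc), discarding in total less than $\tfrac12\mu(C)$. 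On such a $K$ each $m_{f^i}$ is usc by Lemma~\ref{f_l_b}, hence each $K_i:=\{t\in K: m_{f^i}(t)<\alpha(t)\}$ is open in $K$; and $m_f=\inf_i m_{f^i}<\alpha$ forces $K=\bigcup_{i\in I}K_i$. Compactness then yields a finite subcover $K_{i_1},\dots,K_{i_m}$, and the shrinking lemma for finite open covers of a normal space produces compact sets $L_1,\dots,L_m$ with $L_j\subset K_{i_j}$ and $\bigcup_j L_j=K$.

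On each compact $L_j$ the integrand $f^{i_j}$ is convex with closed epigraph, $L_j\ni t\tto\epi f^{i_j}_t$ is lsc (restriction of an lsc multifunction), $\alpha|_{L_j}$ is lsc, and $m_{f^{i_j}}(t)<\alpha(t)$ there; thus Lemma~\ref{lemma_selection} supplies a \emph{continuous} $y_j\colon L_j\to X$ with $f^{i_j}_t(y_j(t))\le\alpha(t)$, and therefore $f_t(y_j(t))\le\alpha(t)$. Splitting $K$ into the disjoint Borel pieces $E_1=L_1$, $E_j=L_j\setminus\bigcup_{l<j}L_l$ and setting $x_K:=y_j$ on $E_j$ gives the desired selection on $K$: it is Borel measurable and, since each $y_j(L_j)$ is compact (hence separable), essentially separably valued, so strongly measurable by Pettis' theorem. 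Finally, I would iterate the local statement on $C_0=A$, $C_{n+1}=C_n\setminus K_{n+1}$, obtaining disjoint compact sets with $\mu(C_n)\le 2^{-n}\mu(A)\to0$; pasting the resulting selections defines $x$ on a set of full measure in $A$, strongly measurable for the same separability reason.

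The step I expect to be the main obstacle is precisely the one forcing this compactness detour: the index set $I$ is uncountable, the ``good'' index realizing $m_{f^i}(t)<\alpha(t)$ depends on $t$, and because $X$ is nonseparable no Kuratowski--Ryll-Nardzewski type selection is available for the (nonconvex) sublevel multifunction of $f$ itself. Reducing on a compact piece to a \emph{finite} family of convex integrands $f^{i_j}$, each amenable to Michael's theorem through Lemma~\ref{lemma_selection}, is what circumvents both the nonseparability and the lack of convexity of $f_t$; the remaining care is bookkeeping the measure discarded at each Lusin and inner-regularity step so that the exhaustion covers $A$ up to a null set.
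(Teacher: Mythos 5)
Your proof is correct and follows essentially the same route as the paper's: reduce via Remark \ref{lemma_inf_} to a uniform Lusin family $(f^i)_{i\in I}$, exhaust $A$ by compact sets on which $\alpha$ is continuous and every $t\tto\epi f^i_t$ is lsc, use the upper semicontinuity of $m_{f^i}$ (Lemma \ref{f_l_b}) together with compactness to extract a finite cover by sets on which a single index works, apply Lemma \ref{lemma_selection} on each piece, and paste by disjointification. Your extra touches --- shrinking to compact sets $L_j$ before invoking Lemma \ref{lemma_selection} (the paper applies it directly on the relatively open $U_{t_k}$, which suffices since Michael's theorem only needs paracompactness), treating extended-valued $\alpha$ via the homeomorphism $\Rex\cong[-1,1]$ rather than splitting off $\{t\colon\alpha(t)=+\infty\}$ as the paper does, and the explicit Pettis-theorem justification of strong measurability --- are harmless refinements of the same argument.
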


\begin{proof}
Due to Remark \ref{lemma_inf_}, it is enough to assume that $f_t$ is the pointwise infimum of a uniform Lusin family $(f^i_t)_{i\in I} \subset \Gamma(X)$. Hence, due to Proposition \ref{prop_lus_a},  $f$ is a Lusin integrand. Set $A':= \{x\in A\colon  \alpha(x)<+\infty\}$ and choose a sequence of compact sets $(B_n)_{n}$ of finite measure with  $\mu(A'\setminus (\bigcup_{n\in\N}B_n))=0$ and such that for all $n\in\N$, $\alpha \colon B_n\to\R$ is continuous and $B_n\ni t\tto \epi f^i_t$ is lsc, for all $i\in I$. Consider  $n\in\N$ and  $t\in B_n$.  Since  $m_f(t)<\alpha(t)$, there exists $i(t)\in I$ such that $m_{f^{i(t)}}(t)<\alpha(t)$. Hence,  by Lemma \ref{f_l_b}, $m_{f^{i(t)}}$ is usc on $B_n$ and since $\alpha|_{B_n}$ is continuous, there exists a neighbourhood $U_t$ of $t$ (relative to the topology on $B_n$)  such that $m_{f^{i(t)}}(s)<\alpha(s)$ for all $s\in U_t$. We observe that $(U_t)_{t\in B_n}$ is an open cover of $B_n$. Hence, by compactness of $B_n$, there exist $t_1,...,t_j\in B_n$ such that $B_n = \bigcup_{k=1}^j U_{t_k}$. Now,  by Lemma \ref{lemma_selection}, there exists continuous functions $x_{t_k}\colon U_{t_k}\to X$ such that $f^{i(t_k)}_s(x_{t_k}(s))\leq \alpha(s)$ for all $s\in U_{t_k}$. Hence, $f_s(x_{t_k}(s))\leq \alpha(s)$ for all $s\in U_{t_k}$. We can consider the function $x_n\colon B_n\to X$ given by $x_n(t) = \sum_{k=1}^j x_{t_k}(t)\1_{A_k}$ where $A_k = U_{t_k}\setminus \bigcup_{\ell=1}^{k-1}U_{t_{\ell}}$ for $k>1$ and $A_1 = U_{t_1}$.  We have that  $f_t(x_n(t))\leq \alpha(t)$ for all $t\in B_n$. We define the strongly measurable function $x'\colon A'\to X$ as $x'(t) = \sum_{n=1}^\infty x_n(t)\1_{B_n}$.  We have that $f_t(x(t))\leq \alpha(t)$ for all $t\in \bigcup_{n\in\N}B_n$, and for $t\in A\setminus A'$, we define $y(t) = x'\in X$ (where $x'$ is any element in $X$). Finally, by defining $x(t):= x'(t)\1_{A'} + y(t)\1_{A\setminus A'}$, we obtain the result. \smartqed
\end{proof}

The next result provides a measurable selection theorem for admissible multifunctions.
    \begin{theorem}\label{selection_thm}
Let $C\colon T\tto X$ be an admissible multifunction and  $A\in\mathcal{A}$. If $C$ takes nonempty values on $A$, then there exists a strongly measurable function $x\colon A\to X$ such that $x(t)\in C(t)$ $\mu$-a.e. $t\in A$.
    \end{theorem}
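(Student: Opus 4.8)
The plan is to reduce the selection problem for the admissible multifunction $C$ to the sublevel selection theorem for admissible integrands, namely Theorem \ref{theorem_selection_2}, by encoding $C$ through its indicator integrand. First I would invoke Proposition \ref{mult_adms}: since $C$ is admissible, the function $f\colon T\times X\to\Rex$ defined by $f_t(x):=\delta_{C(t)}(x)$ is an admissible integrand. This is the step that transfers all the structural hypotheses (the uniform family of Lusin multifunctions, together with the closedness and convexity of the pieces) from the multifunction side to the integrand side, so that Theorem \ref{theorem_selection_2} becomes directly applicable.

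Next I would identify the infimal value function. Because $C(t)\neq\emptyset$ for every $t\in A$, there is at least one point $x\in C(t)$ with $\delta_{C(t)}(x)=0$, so $m_f(t)=\inf_{x\in X}\delta_{C(t)}(x)=0$ for all $t\in A$. Taking the constant function $\alpha\equiv 1$, which is trivially measurable on $A$, yields $m_f(t)=0<1=\alpha(t)$ for all $t\in A$, which is exactly the hypothesis required by Theorem \ref{theorem_selection_2}. Applying that theorem then produces a strongly measurable function $x\colon A\to X$ with $f_t(x(t))\leq\alpha(t)=1$ for $\mu$-a.e. $t\in A$.

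The final observation closes the argument: the indicator $\delta_{C(t)}$ takes only the values $0$ and $+\infty$, so the inequality $\delta_{C(t)}(x(t))\leq 1$ forces $\delta_{C(t)}(x(t))=0$, which is precisely $x(t)\in C(t)$. Hence $x$ is the desired measurable selection $\mu$-almost everywhere on $A$.

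As for difficulty, there is no genuine obstacle remaining at this stage, since all of the analytic work---the continuous selection via Michael's theorem in Lemma \ref{lemma_selection}, the compact-exhaustion and patching construction inside Theorem \ref{theorem_selection_2}, and the passage from the convex Lusin pieces to the closed convex-hull multifunction in Proposition \ref{mult_adms}---has already been carried out. The only point demanding a little care is the choice of a finite measurable bound $\alpha$ dominating $m_f$; the two-valued range of the indicator makes any strictly positive constant admissible, so the reduction is entirely clean.
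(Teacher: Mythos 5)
Your proposal is correct and coincides with the paper's own proof: both reduce to the indicator integrand via Proposition \ref{mult_adms}, take the constant bound $\alpha\equiv 1$ above $m_f\equiv 0$, apply Theorem \ref{theorem_selection_2}, and conclude from the two-valued range of $\delta_{C(t)}$. Nothing is missing.
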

    \begin{proof}
        By virtue of Proposition \ref{mult_adms}, $(t,x)\mapsto\delta_{C(t)}(x)$ is an admissible multifunction. Taking into account that  $C(t)\neq \emptyset$ for all $t\in A$, it is clear that $\inf_{x\in X}\delta_{C(t)}(x)=0<1$, for all $t\in A$. Set  $\alpha(t):= 1$ for all $t\in A$. We have that $\inf_{x\in X}\delta_{C(t)}(x)<\alpha(t)$ for all $t\in A$. Hence, by Theorem \ref{theorem_selection_2},  there exists a strongly measurable function $x\colon A\to X$ such that $\delta_{C(t)}(x(t))\leq \alpha(t) = 1$ $\mu$-a.e., which implies that $x(t)\in C(t)$ $\mu$-a.e.. \smartqed
    \end{proof}
    
    \begin{remark} It is usual that for existence theorems for 
 differential inclusions be assumed the existence of a measurable selection of the multifunction defining the differential inclusion (see, e.g.,  \cite[Theorem 10.5]{MR1189795}). For nonseparable spaces this is not an easy task. Hence, Theorem \ref{selection_thm} is a reasonable alternative for finding such measurable selections in general spaces.
    \end{remark}

\section{Conjugate of integral functions  and consequences}\label{sect-6}

In this section, we compute the conjugate of non-necessarily convex integral functionals and derive calculus rules for the convex subdifferential.

We start by showing the measurability of a Lusin integrand and its conjugate composed with a strongly measurable function.

\begin{lemma}\label{lemma_lusin}
	Let $\mu$ be a $\sigma$-finite  inner regular measure over a metric space $T$ and $f\colon T \times X \to \Rex$ a Lusin integrand such that $f_t$ is lsc for all $t\in T$.  Then, the maps $t \mapsto f_t(x(t))$ and $t\mapsto f^\ast_t(x^\ast(t))$ are $\mathcal{A}$-measurable for any strongly measurable functions $x\colon T\to X$ and $x^\ast\colon  T\to  X^\ast$.
\end{lemma}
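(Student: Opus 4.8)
The plan is to combine the Lusin integrand hypothesis with the classical Lusin theorem (Proposition \ref{lusin}) to reduce, on sets of finite measure, to compact sets on which the data $x$ (resp.\ $x^\ast$) are continuous and the epigraph multifunction $t\tto\epi f_t$ is lsc; on such sets the measurability of the two maps will follow from the two parts of Lemma \ref{first_lemma}. First I would use $\sigma$-finiteness to write $T=\bigcup_n T_n$ with $\mu(T_n)<\infty$, so that it suffices to establish measurability of each map on every $T_n$. Fixing $n$ and $m\in\N$, the definition of Lusin integrand together with Proposition \ref{lusin} applied to the strongly measurable $x$ produces a compact set $K=K_{n,m}\subset T_n$ with $\mu(T_n\setminus K)<1/m$ such that $x|_K$ is continuous and $K\ni t\tto\epi f_t$ is lsc (note $K\in\mathcal{A}$ since $\mathcal{A}\supset\mathcal{B}(T)$).

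For the map $t\mapsto f_t(x(t))$, on $K$ we have $x|_K\in\mathcal{C}(K,X)$, the functions $f_t$ are lsc for all $t$, and $K\ni t\tto\epi f_t$ is lsc. Hence Lemma \ref{f_l_c}, applied with the compact metric space $K$ in place of $T$ and with its induced $\sigma$-algebra $\mathcal{A}|_K$, yields directly that $t\mapsto f_t(x(t))$ is measurable on $K$.

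For the conjugate $t\mapsto f^\ast_t(x^\ast(t))$, I would invoke Lemma \ref{f_l_a} with $\Tau_{X^\ast}$ taken to be the norm topology of $X^\ast$: the duality product is jointly continuous for the norm topologies on $X$ and $X^\ast$, so the standing hypothesis of Lemma \ref{first_lemma} holds, and therefore $(t,x^\ast)\mapsto f^\ast_t(x^\ast)$ is lsc on $K\times X^\ast$. Applying Proposition \ref{lusin} to the strongly measurable function $x^\ast\colon T\to X^\ast$ (and intersecting with $K$) makes $x^\ast|_K$ norm-continuous; composing the continuous map $t\mapsto(t,x^\ast(t))$ with the lsc function $(t,x^\ast)\mapsto f^\ast_t(x^\ast)$ gives an lsc, hence measurable, map on $K$.

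Finally, letting $m\to\infty$, the union $\bigcup_m K_{n,m}$ has full measure in $T_n$; each map is measurable on this countable union of measurable sets and, by completeness of $\mu$, may be assigned measurably on the remaining null set, so both maps are measurable on $T_n$ and therefore on all of $T=\bigcup_n T_n$. The genuinely delicate point is the conjugate: one must choose a topology on $X^\ast$ for which Lemma \ref{f_l_a} applies \emph{and} for which $x^\ast|_K$ stays continuous, and the norm topology does both; note also that, unlike $f^\ast_t$, the integrand $f_t$ need not be jointly lsc, which is precisely why the direct map cannot be treated by Lemma \ref{f_l_a} and must instead go through the auxiliary construction underlying Lemma \ref{f_l_c}.
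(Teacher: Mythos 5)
Your proposal is correct and follows essentially the same route as the paper: reduce by $\sigma$-finiteness to finite-measure sets, exhaust by compacts on which the epigraph map is lsc and $x$ is continuous (Proposition \ref{lusin}), then apply Lemma \ref{f_l_c} for $t\mapsto f_t(x(t))$ and Lemma \ref{f_l_a} (with the norm topology on $X^\ast$, under which the pairing is jointly continuous) for the conjugate, finishing with completeness of $\mu$. If anything, you are slightly more careful than the paper, which applies Proposition \ref{lusin} only to $x$ and leaves implicit the step making $x^\ast$ continuous on the compact pieces before composing with the jointly lsc conjugate; your explicit Lusin reduction for $x^\ast$ closes that small gap.
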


\begin{proof}
	Since $(T,\mathcal{A},\mu)$ is $\sigma$-finite, it is enough to prove the measurability over sets of finite measure.  Let $A\in \mathcal{A}$ of finite measure. Consider an increasing sequence of compact sets $B^1_k \subset A$ such $\mu(A\setminus  \bigcup_{k \in \mathbb{N}} B^1_k )=0$ and $B^1_k \colon t \tto \epi f_t$ is lsc. By virtue of Lemma \ref{first_lemma}, the map $(t,x^\ast) \mapsto f_t^\ast(x^\ast)$ is lsc over $B^1_k\times X^\ast$. Hence, the map $B_k^1\ni t\mapsto f^\ast_t(x^\ast(t))$ is measurable for all $k\in\mathbb{N}$ and, therefore, $t\mapsto f^\ast_t(x^\ast(t))$ is $\mathcal{A}$-measurable  over $A$ (recall that  $(T,\mathcal{A},\mu)$ is complete). Now, by applying Proposition \ref{lusin} to $x|_{_A} $, there exist an increasing sequence of  compact sets $B_k^2 \subset A$ such that $\mu(A\setminus  \bigcup_{k \in \mathbb{N}} B^2_k )=0$ and $B^2_k \colon t \mapsto x(t)$ is continuous. Set $B_k := B_k^1 \cap B_k^2$.  Then, by Lemma \ref{f_l_c}, $t \mapsto  f_t(x(t))$  is measurable over $B_k$.  Thus, if $B:= \bigcup_{k \in \mathbb{N}} B_k$, then we have that $\mu(A\setminus  B)=0$, and the  map $  B\ni  t \mapsto  f_t(x(t))$ is $\mathcal{A}$-measurable, which implies the measurability of $t\mapsto f_t(x(t))$. 
\end{proof}
In what follows, we consider the following integral functions:
\begin{equation*}
	\Intf{f}(x)=\int_T    f_t(x(t))  d \mu \text{ and }\Intf{f^\ast}(x^\ast)=\int_T    f^\ast_t(x^\ast(t))  d \mu,
\end{equation*}
where $x\colon T\to X$ and $x^\ast \colon T\to X^\ast$ are strongly measurables. According to Lemma \ref{lemma_lusin}, if $f$ is a Lusin integrand, then $\Intf{f}$ and $\Intf{f^\ast}$ are well-defined.  However,  these functionals cannot be considered for weakly$^\ast$ measurable functions, as it is shown in the following example.
\begin{example}[Non measurability of $t\mapsto f^\ast_t(x^\ast(t))$]\newline
Let us consider $X = \ell^2([0,1])$ and $T=[0,1]$ with the Lebesgue measure. Let $(e_t)_{t\in [0,1]}$ the canonical basis of $X$ and   $I\subset [0,1]$ be a non Lebesgue measurable set.  Define $x^\ast \colon T\to X^\ast$  and  $f\colon T\times X \to \Rex$ as $x^{\ast}(t):=e_t \1_{I}(t)-e_t \1_{I^c}(t) $ and $f_t(x):=\delta_{C}(x)$,
where  $C:= \{ x\in X\colon  0\leq x_t\leq 1 \textrm{ for all } t\in [0,1] \}$. Then, $f^{\ast}_t(y^{\ast})=\sup_{x\in C}\langle y^{\ast},x\rangle$. Moreover, if $x\in X$, then the set $\{t\in [0,1]\colon x_t\neq 0\}$ is countable. Hence, the map $t\mapsto \langle x^{\ast}(t),x\rangle$ is measurable for all $x\in X$, which implies that $x^{\ast}$ is weakly$^{\ast}$ measurable. However,  $\{ t\in T\colon  f^\ast_t(x^\ast(t))>0 \} =I$, which  implies that $t\mapsto f^\ast_t(x^\ast(t))$ is not measurable. \qed
\end{example}

The following result establishes that any admissible integrand is a Lusin integrand. 
\begin{lemma}\label{lemma_uniform}
      Let $f$ be an admissible integrand. Then, $f$ is a Lusin integrand. Consequently, the maps $t\mapsto f_t(x(t))$ and $t\mapsto f^\ast_t(x^\ast(t))$ are $\mathcal{A}$-measurables for any strongly measurable functions $x\colon T\to X$ and $x^\ast\colon T\to X^\ast$.
\end{lemma}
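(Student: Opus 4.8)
The plan is to prove the statement in two parts, corresponding to its two assertions. First I would establish that an admissible integrand $f$ is a Lusin integrand; then the measurability of $t\mapsto f_t(x(t))$ and $t\mapsto f^\ast_t(x^\ast(t))$ follows immediately by invoking \textbf{Lemma \ref{lemma_lusin}}, provided I can also check that $f_t$ is lsc for each $t$. The whole argument is essentially a matter of assembling results already proved in the excerpt.

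For the first assertion, by definition of admissibility there is a uniform Lusin family $(f^i)_{i\in I}$ with $f^i_t\in\Gamma(X)$ for all $i,t$ and with $\epi f_t=\cl(\epi(\inf_{i\in I}f^i_t))$ for all $t\in T$. Set $g_t:=\inf_{i\in I}f^i_t$. By \textbf{Proposition \ref{prop_lus_a}} (part \ref{prop_lus_a}), the pointwise infimum $g$ of a uniform Lusin family is itself a Lusin integrand. Since $\epi f_t=\cl(\epi g_t)$, we have precisely $f=\cl(g)$ in the sense of the lower-closure operation defined in the preliminaries (recall $\epi\cl(g)=\cl(\epi g)$). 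Then \textbf{Proposition \ref{lus_c}} (part \ref{lus_c}), which states that the lower closure of a Lusin integrand is again a Lusin integrand, yields that $f$ is a Lusin integrand. This chain $g \text{ Lusin} \Rightarrow \cl(g)=f \text{ Lusin}$ is the core of the proof.

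For the second assertion, I must verify the hypotheses of \textbf{Lemma \ref{lemma_lusin}}: that $f$ is a Lusin integrand (just shown) and that $f_t$ is lsc for every $t\in T$. The latter is automatic because $\epi f_t=\cl(\epi g_t)$ is a closed set, so $f_t=\cl(g_t)$ is by construction lower semicontinuous. With both hypotheses in hand, \textbf{Lemma \ref{lemma_lusin}} gives directly that $t\mapsto f_t(x(t))$ and $t\mapsto f^\ast_t(x^\ast(t))$ are $\mathcal{A}$-measurable for any strongly measurable $x\colon T\to X$ and $x^\ast\colon T\to X^\ast$, which is the desired conclusion.

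I do not anticipate a genuine obstacle here, since every ingredient has been prepared earlier; the only point requiring a little care is matching the abstract lower-closure operation $\cl(\cdot)$ from the preliminaries with the epigraph identity in the definition of admissibility, so that \textbf{Proposition \ref{lus_c}} applies verbatim. A secondary point is noting explicitly that the closed-epigraph characterization forces lower semicontinuity of each $f_t$, which is needed to satisfy the standing hypothesis of \textbf{Lemma \ref{lemma_lusin}}.
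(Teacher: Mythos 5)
Your proposal is correct and follows essentially the same route as the paper: the paper likewise sets $g_t:=\inf_{i\in I}f^i_t$, invokes Proposition \ref{prop_lus_a} to see $g$ is a Lusin integrand, applies Proposition \ref{lus_c} to $f=\cl(g)$, and then cites Lemma \ref{lemma_lusin} for the measurability conclusions. Your explicit verification that each $f_t$ is lsc (via the closedness of $\epi f_t=\cl(\epi g_t)$), which Lemma \ref{lemma_lusin} formally requires, is a small point the paper leaves implicit, so if anything your write-up is slightly more complete.
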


\begin{proof}
    Let $(f^i)_{i\in I}$ be a uniform Lusin family such that $$\epi f_t = \cl{\epi (\inf_{i\in I} f^i(t,\cdot ))} \text{ with } f_t^i \in \Gamma(X) \text{ for all } (i,t)\in I\times T.  $$ Set $g_t(x) := \inf_{i\in I} f^i_t(x)$.  It follows that  $f = \cl \ g$ and by Lemma \ref{prop_lus_a}, $g$ is a Lusin integrand. Hence, by Lemma \ref{lus_c}, $f$ is a Lusin integrand. The second  assertion follows from  Lemma \ref{lemma_lusin}.
\end{proof}
Let $\mathcal{L}$ be the  space of strongly measurable functions $x\colon T\to X$. We say that $\mathcal{L}$ is \emph{decomposable} (see, e.g., \cite{MR0467310,MR1491362}) if for every $x\in \mathcal{L}$,  and every strongly measurable function $y\colon T\to X$  with compact range the function $x\1_{A^c}  + y \1_A\in \mathcal{L}$ for all $A \in \mathcal{A}$ of finite measure. 

Now, we state the main result of this section, which corresponds to the computation of the conjugate of an integral function in a decomposable space of functions with values on a non-necessarily separable Banach space. It is worth to emphasize that the above result does not require the convexity of the integrand.
\begin{theorem}[Conjugate of an integral functional I]\label{Theorem01} \newline
Let $f$ be an admissible integrand and $\mathcal{L}$ be a decomposable space of measurable functions such that there exists some $u \in \mathcal{L}$ with $\Intf{f}(u) <+\infty$.  Then, for any   strongly measurable function $x^\ast\colon T\to X^\ast $   
	\begin{align}\label{conjugate}
		\Intf{f^\ast}(x^\ast)=\sup_{x\in  \mathcal{L}} \int_T  \left(  \langle x^\ast(t), x(t)\rangle - f_t(x(t))  \right) \dmu.
	\end{align}
\end{theorem}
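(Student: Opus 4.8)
The plan is to reduce the statement to an interchange of infimum and integral for a single admissible integrand and then to produce a near‑optimal measurable selection by combining the selection theorem \ref{theorem_selection_2} with the decomposability of $\mathcal{L}$. First I would set $h_t(x):=f_t(x)-\langle x^\ast(t),x\rangle$ and verify that $h$ is again an admissible integrand with $m_h(t)=-f^\ast_t(x^\ast(t))$. Since $x^\ast$ is strongly measurable, Proposition \ref{lusin} shows that on each compact set where $x^\ast$ is continuous the pairing $(t,x)\mapsto\langle x^\ast(t),x\rangle$ is jointly continuous, so $g_t(x):=\langle x^\ast(t),x\rangle$ is a Scorza--Dragoni function. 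If $(f^i)_{i\in I}$ is a uniform Lusin family with $f^i_t\in\Gamma(X)$ and $\epi f_t=\cl(\epi(\inf_i f^i_t))$, then $h^i_t:=f^i_t-g_t\in\Gamma(X)$, the family $(h^i)_{i\in I}$ is uniform Lusin by Proposition \ref{prop_lus_b}, and because subtracting the continuous function $g_t$ commutes with the lower closure one obtains $\epi h_t=\cl(\epi(\inf_i h^i_t))$. Hence $h$ is admissible, and by Lemma \ref{lemma_uniform} the map $t\mapsto h_t(y(t))$ is measurable for every strongly measurable $y$; in particular $J(y):=\int_T\bigl(\langle x^\ast(t),y(t)\rangle-f_t(y(t))\bigr)\dmu=\int_T(-h_t(y(t)))\dmu$ is well defined for $y\in\mathcal{L}$.

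The inequality $\sup_{y\in\mathcal{L}}J(y)\le\Intf{f^\ast}(x^\ast)$ is immediate: the Fenchel--Young inequality gives $-h_t(y(t))\le f^\ast_t(x^\ast(t))$ pointwise, and monotonicity of the upper integral yields $J(y)\le\Intf{f^\ast}(x^\ast)$ for every $y$. The content lies in the reverse inequality $\Intf{f^\ast}(x^\ast)\le\sup_y J(y)$, which is exactly the interchange $\inf_{y\in\mathcal{L}}\int_T h_t(y(t))\dmu=\int_T m_h(t)\dmu$. If $\Intf{f^\ast}(x^\ast)=\int_T(-m_h)\dmu=-\infty$ there is nothing to prove, so I would fix a real $r<\int_T(-m_h)\dmu$ and aim to build $y\in\mathcal{L}$ with $J(y)>r$.

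Using that $m_h=-f^\ast_\cdot(x^\ast(\cdot))$ is measurable, on the set $D:=\{t:\ m_h(t)<h_t(u(t))\}$ I would choose a finite‑valued measurable $\alpha$ with $m_h(t)<\alpha(t)\le h_t(u(t))$ and $\int_T(-\alpha)\dmu>r$ (approximating $-m_h$ from below and setting $\alpha=m_h$ on $T\setminus D$, where $u(t)$ already minimizes $h_t$ and $f_t(u(t))$ is finite a.e. because $\Intf{f}(u)<+\infty$). Theorem \ref{theorem_selection_2} applied to the admissible integrand $h$ then yields a strongly measurable $\bar x$ on $D$ with $h_t(\bar x(t))\le\alpha(t)$ a.e., so that $-h_t(\bar x(t))\ge-\alpha(t)$ and, by $\alpha\le h_t(u)$ on $D$, also $-h_t(\bar x(t))\ge-h_t(u(t))$. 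To realize this selection inside $\mathcal{L}$ I would invoke $\sigma$‑finiteness, writing $T=\bigcup_n T_n$ with $T_n$ increasing of finite measure, and Lusin's theorem (Proposition \ref{lusin}) to extract compact sets $B_n\subset T_n\cap D$ exhausting $D$ up to a null set on which $\bar x$ is continuous, whence $\bar x(B_n)$ is compact. Decomposability then produces $x_n:=u\1_{B_n^c}+\bar x\1_{B_n}\in\mathcal{L}$, for which $-h_t(x_n)\ge-h_t(u)$ everywhere and $-h_t(x_n)\ge-\alpha$ on $B_n$, so $J(x_n)\ge\int_{B_n}(-\alpha)\dmu+\int_{B_n^c}(-h_t(u))\dmu$.

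The proof is then finished by letting $n\to\infty$: the term $\int_{B_n}(-\alpha)\dmu$ increases to $\int_T(-\alpha)\dmu>r$ while the residual $\int_{B_n^c}(-h_t(u))\dmu$ tends to its optimal value over $T\setminus D$, so $J(x_n)>r$ for $n$ large. I expect the genuine obstacle to be precisely this last passage to the limit: controlling the residual integral $\int_{B_n^c}(-h_t(u))\dmu$ and handling the $+\infty-\infty$ ambiguities of the upper integral. This is exactly where the hypothesis $\Intf{f}(u)<+\infty$ is indispensable, since together with the pointwise bound $-h_t(u)\le f^\ast_t(x^\ast(t))$ it keeps the relevant positive parts integrable and legitimizes the monotone‑convergence step; verifying that convergence for the upper integral, and checking the compact‑range requirement forced by the definition of decomposability, are the two points demanding real care.
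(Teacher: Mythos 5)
Your proposal reproduces the paper's proof essentially step for step: the paper also passes to the shifted integrand $\psi_t(x)=f_t(x)-\langle x^\ast(t),x\rangle$, proves its admissibility exactly as you do (Proposition \ref{lusin} to make $t\mapsto x^\ast(t)$ continuous on compact sets, so that the pairing is Scorza--Dragoni, then Proposition \ref{prop_lus_b}), applies the selection Theorem \ref{theorem_selection_2} with a measurable threshold strictly above $m_\psi=-f^\ast_{\cdot}(x^\ast(\cdot))$, uses Proposition \ref{lusin} again to give the selection compact range, and patches with $u$ via decomposability. The differences are bookkeeping only: the paper fixes an increasing sequence of integrable minorants $\ell_j$ of $f^\ast_t(x^\ast(t))$ whose integrals realize $\Intf{f^\ast}(x^\ast)$ and, for each $(\epsilon,j)$, one compact $B$ with $\int_{B^c}|\ell_j|\dmu<\epsilon$ and $\int_{B^c}|\psi_t(u(t))|\dmu<\epsilon$, whereas you fix $r$ below the target value, split $T$ into the set $D$ where $u$ is suboptimal, and exhaust by compacts $B_n$.

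One concrete caution about the step you yourself single out as the obstacle: your proposed justification points the inequality the wrong way. Fenchel--Young gives $-h_t(u(t))\le f^\ast_t(x^\ast(t))$, an upper bound on the residual integrand, but $\int_{B_n^c}(-h_t(u(t)))\dmu$ must be bounded \emph{below}, i.e.\ one needs $\bigl(f_t(u(t))-\langle x^\ast(t),u(t)\rangle\bigr)^+$ to be integrable off large compacts. The hypothesis $\Intf{f}(u)<+\infty$ controls the positive part of $f_\cdot(u(\cdot))$ but says nothing about $\langle x^\ast(\cdot),u(\cdot)\rangle$, since $x^\ast$ is an arbitrary strongly measurable function with no integrability attached. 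To be fair, the paper's own proof glosses the very same point: the compact $B$ with $\int_{B^c}|\psi_t(u(t))|\dmu<\epsilon$ is asserted without justification and tacitly presupposes that $\psi_\cdot(u(\cdot))$ is integrable off a compact. The issue is not cosmetic: take $T=[1,\infty)$ with Lebesgue measure, $X=\R$, $f_t(x)=e^{-(x-1)}-x$, $u\equiv 1$ (so $\Intf{f}(u)=0$), $x^\ast\equiv -1$ and $\mathcal{L}=L^\infty(T,\R)$; then $f^\ast_t(x^\ast(t))=0$, so the left-hand side of \eqref{conjugate} is $0$, while $\langle x^\ast(t),z(t)\rangle-f_t(z(t))=-e^{-(z(t)-1)}\le -e^{1-\Vert z\Vert_\infty}$ forces the supremum over $\mathcal{L}$ to be $-\infty$. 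So your proof is at parity with the paper's, and your flagged ``genuine obstacle'' is real, but it cannot be closed from the stated hypotheses alone; one must either dispose of the degenerate cases separately (e.g.\ if $\int(\psi_t(u(t)))^-\dmu=+\infty$ the supremum is trivially $+\infty$) and additionally secure integrability of $(\psi_t(u(t)))^+$, which amounts to an extra assumption such as $\mu$ finite, $\langle x^\ast(\cdot),u(\cdot)\rangle\in L^1(T)$, or $x^\ast$ lying in the dual pairing appropriate to $\mathcal{L}$.
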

\begin{proof} If  $\Intf{f}(u)=-\infty$, then the result holds trivially. Otherwise, if $\Intf{f}(u)\in \mathbb{R}$, by definition of  $f^\ast_t(x^\ast(t))$, we can consider a sequence of integrable functions $(\ell_j)_{j\in \N }$   such that $\ell_j(t) \leq \ell_{j+1} (t)< f_t^\ast(x^\ast(t))$ $\mu$-a.e. and  
	\begin{equation*}
		\sup_{ j\in \mathbb{N}} \int_T \ell_j (t) \dmu=   \Intf{f^\ast}(x^\ast).
	\end{equation*}
	\noindent The rest of proof is divided into two claims:

	\noindent {\bf Claim~1:} \emph{The function $\psi_t(x) :=f_t(x) -  \langle x^\ast(t),x\rangle $ is an admissible integrand.} \newline
\noindent \emph{Proof of Claim 1}: Let us consider a uniform Lusin family $(f_t^i)_{i\in I}$  such that $\epi f_t = \cl\left( \epi(\inf_{i\in I} f_t^i )\right)$.   It is clear that 
$\epi \psi_t = \cl\left( \epi(\inf_{i\in I} g^i_t)\right)$, where $g_i(t,x) := f_i(t,x) - \langle x^\ast(t),x\rangle$. Moreover, by  Proposition \ref{lusin}, for all $\epsilon>0$ and $A\in \mathcal{A}$ of finite measure, there exists a compact set $K\subset A$ with $\mu(A\setminus K)<\epsilon$ such that $x^\ast\colon K\to X$ is continuous. Hence, $K\times X\ni (t,x) \mapsto \langle x^\ast(t),x\rangle$ is continuous. Consequently, by Lemma \ref{prop_lus_b},   $(g_i)_{i\in I}$ is a uniform Lusin family, which proves the claim.

\noindent {\bf Claim~2:} \emph{Theorem \ref{Theorem01} holds.} \newline
\noindent \emph{Proof of Claim 2}: Let us consider $\epsilon >0$ and $j\in\mathbb{N}$. By Lemma \ref{lemma_uniform},  $\psi$ is a Lusin integrand. Hence, we can apply Theorem \ref{theorem_selection_2} to get  the existence of a strongly measurable function $y\colon T\to X$ such that $\psi_t(y(t))\leq -\ell_j (t),$ for all $t\in T$. Now,  there is  a compact set $B$ of finite measure such that  $\int_{ B^c } |\ell_j(t)| \dmu < \epsilon$ and $\int_{  B^c } |\psi_t(u(t))| \dmu < \epsilon$. Hence, by virtue of Proposition \ref{lusin},  we can assume that $B\ni t\mapsto y(t)$ is continuous, which implies that $y(B)$ is compact.  Moreover,
	\begin{align*}
		\int_B \ell_j(t) \dmu \leq  \int_B (   \langle x^\ast(t),y(t)\rangle - f_t(y(t)) ) \dmu.
	\end{align*}
Hence, by extending $y$ as zero out of $B$ and using  the  decomposability of $\mathcal{L}$, we obtain that 
	$z(t):= u(t) \1_{B^c} + y(t) \1_{B} $ belongs to $\mathcal{L}$. Furthermore, 
	\begin{align*}
		\int_T \ell_j(t) \dmu &=  \int_{B}\ell_j(t)\dmu + \int_{B^c}\ell_j(t)\dmu\leq  \int_B \left(    \langle x^\ast(t),y(t)\rangle - f_t(y(t)) \right) \dmu+\epsilon \\
		&\leq \epsilon + \int_{T} \left(    \langle x^\ast(t),z(t)\rangle - f_t(z(t)) \right) \dmu+\int_{B^c}\psi(t,u(t)) \dmu\\
		&\leq 2\epsilon + \int_{T}\left(    \langle x^\ast(t),z(t)\rangle - f_t(z(t)) \right) \dmu\\
		&\leq 2\epsilon + \sup_{x\in  \mathcal{L}}  \int_T  \left(  \langle x^\ast(t), x(t)\rangle - f_t( x(t))  				 \right)d \mu,
	\end{align*}
	which implies, since $\epsilon >0$ is arbitrary and $j \in \mathbb{N}$, the inequality $\leq $ in \eqref{conjugate}. The opposite inequality always holds.  The proof is then finished.
\end{proof}

\noindent The next corollary gives a characterization of optimal solutions for optimization problems  over decomposable spaces.
\begin{corollary}  Under the assumptions of Theorem \ref{Theorem01}, an element $\bar{x} \in \mathcal{L}$ solves the problem  $ \min_{ x\in \mathcal{L}}  \Intf{f}(x)$ if and only if  $\bar{x}(t)\in \operatorname{argmin}_{x\in X}f_t(x)$ for almost all $t\in T$.
\end{corollary}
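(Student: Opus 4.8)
The plan is to deduce the whole statement from the conjugate formula of Theorem~\ref{Theorem01} specialized at the zero function. Taking $x^\ast\equiv 0$ (which is strongly measurable) in \eqref{conjugate} and recalling, as in the proof of Lemma~\ref{f_l_b}, that $f^\ast_t(0)=-\m{f}(t)$, the left-hand side of \eqref{conjugate} becomes $\int_T f^\ast_t(0)\dmu=-\int_T\m{f}(t)\dmu$, while the right-hand side becomes $\sup_{x\in\mathcal{L}}\int_T\bigl(-f_t(x(t))\bigr)\dmu=-\inf_{x\in\mathcal{L}}\Intf{f}(x)$. This yields the interchange identity
\begin{equation*}
\inf_{x\in\mathcal{L}}\Intf{f}(x)=\int_T\m{f}(t)\dmu,
\end{equation*}
which is the engine of the argument. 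No measurability obstruction arises here: since $f$ is admissible, Lemma~\ref{lemma_uniform} guarantees that $t\mapsto f_t(\bar x(t))$ and $t\mapsto \m{f}(t)=-f^\ast_t(0)$ are $\mathcal{A}$-measurable, so both integrals are well defined.

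Next I record the elementary pointwise bound $f_t(\bar x(t))\ge \m{f}(t)$, valid for every $t$ by definition of $\m{f}$, and its integrated form $\Intf{f}(\bar x)\ge\int_T\m{f}(t)\dmu$ obtained by monotonicity of the integral. For the \emph{sufficiency} direction, if $\bar x(t)\in\operatorname{argmin}_{x\in X}f_t(x)$ for a.e.\ $t$, then $f_t(\bar x(t))=\m{f}(t)$ a.e., whence $\Intf{f}(\bar x)=\int_T\m{f}(t)\dmu=\inf_{x\in\mathcal{L}}\Intf{f}(x)$ by the identity above, so $\bar x$ is a solution. For the \emph{necessity} direction, if $\bar x$ solves the problem then $\Intf{f}(\bar x)=\int_T\m{f}(t)\dmu$; combining this equality with the pointwise bound shows that the nonnegative measurable function $t\mapsto f_t(\bar x(t))-\m{f}(t)$ has zero integral, hence vanishes $\mu$-a.e., i.e.\ $f_t(\bar x(t))=\m{f}(t)$ and $\bar x(t)\in\operatorname{argmin}_{x\in X}f_t(x)$ for a.e.\ $t$.

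The one point requiring care, and the step I expect to be the main obstacle, is the passage from ``equal integrals'' to $\int_T\bigl(f_t(\bar x(t))-\m{f}(t)\bigr)\dmu=0$ in the necessity argument, because of the convention $+\infty+(-\infty)=+\infty$ built into the upper integral, which in general forbids splitting or subtracting integrals. This is resolved by the standing hypothesis that some $u\in\mathcal{L}$ satisfies $\Intf{f}(u)<+\infty$: it forces the optimal value $\int_T\m{f}(t)\dmu<+\infty$, so in the case of a genuine solution with finite value both $t\mapsto f_t(\bar x(t))$ and $t\mapsto\m{f}(t)$ are integrable real-valued functions and the difference may be integrated term by term, after which the classical ``nonnegative function with zero integral vanishes a.e.'' principle applies verbatim and closes the proof.
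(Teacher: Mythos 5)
Your proposal is correct and follows essentially the same route as the paper: both specialize the conjugate formula of Theorem \ref{Theorem01} at $x^\ast\equiv 0$ to obtain $\inf_{x\in\mathcal{L}}\Intf{f}(x)=-\Intf{f^\ast}(0)=\int_T \m{f}(t)\dmu$, and then pass from equality of integrals to the a.e.\ pointwise identity $f_t(\bar x(t))=\m{f}(t)$ via the nonnegativity of $f_t(\bar x(t))-\m{f}(t)$. The only difference is that you make explicit the integrability/convention bookkeeping (using $\Intf{f}(u)<+\infty$ to justify subtracting the integrals) that the paper's two-line proof leaves implicit, which is a legitimate refinement rather than a different method.
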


\begin{proof}
By Theorem \ref{Theorem01},  $\Intf{f}(\bar{x}) = \inf_{x\in \mathcal{L}} \Intf{f}(x) 
    =-\mathcal{I}_{f^\ast}(0)
    = \int_T \inf_{ x\in X} f_t(x) \dmu$. Hence,  $f_t(\bar{x}(t))= \inf_{ x\in X} f_t(x)$ for a.e. $t\in T$, which ends the proof.
\end{proof}

 For every $p\in [1,+\infty]$, the space $L^p(T,X)$ is  decomposable. In addition, when $X$ is an Asplund space and $p<+\infty$, its topological dual  is given by  $L^q(T,X^\ast)$, where $1/p+1/q=1$ (see, e.g.,  \cite{MR0453964}). On the other hand, the dual space of $L^\infty(T,X)$ can be identify  with  the direct sum $L^1(T,X^\ast_{w^\ast})\oplus L^{\text{sing}}(T,X)$, where $L^{\text{sing}}(T,X)$ is the space of singular measures defined on $L^\infty(T,X)$ (see, e.g., \cite[Theorem 4.1]{levin1975convex}).
\begin{corollary}\label{cor1}
Let $X$ be  an Asplund space and assume that $p \in [1,+\infty)$. Let  $f\colon T\times X \to \Rex$ an admissible integrand such that $\Intf{f}(u)<+\infty$ for some $u \in L^p(T,X)$. Then, for all $\epsilon\geq 0$, the following formulas hold:
\begin{align*}
   (\Intf{f})^\ast(x^\ast)&=	\Intf{f^\ast}(x^\ast) \textrm{ for every } x^\ast \in L^q(T, X^\ast), \textrm{ with } q = \frac{p}{p-1},\\
    \partial_\epsilon \Intf{f}(x) &=\bigcup_{\eta\in\mathcal{Z}(\epsilon)}\left\{  x^\ast \in L^q(T, X^\ast) \colon x^\ast(t) \in \partial_{\eta(t)} f_t(x(t)) \quad \mu\text{-a.e.}\right\}, 
\end{align*}
where $\mathcal{Z}(\epsilon):= \{ \ell\in L^1(T)\colon\ell\geq 0  \ \mu\text{-a.e. and }\int_T\ell(t)\dmu\leq \epsilon \}.$
\end{corollary}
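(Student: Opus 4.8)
The plan is to deduce both formulas from the conjugate identity of Theorem \ref{Theorem01}, exploiting the identification of the dual of $L^p(T,X)$ with $L^q(T,X^\ast)$ for $p\in[1,\infty)$ and $X$ Asplund. First I would establish the conjugate formula $(\Intf{f})^\ast = \Intf{f^\ast}$ on $L^q(T,X^\ast)$. Theorem \ref{Theorem01}, applied with $\mathcal{L}=L^p(T,X)$ (which is decomposable and contains $u$ with $\Intf{f}(u)<+\infty$), gives for every strongly measurable $x^\ast$ the equality
\begin{align*}
\Intf{f^\ast}(x^\ast)=\sup_{x\in L^p(T,X)}\int_T\bigl(\langle x^\ast(t),x(t)\rangle - f_t(x(t))\bigr)\dmu.
\end{align*}
When $x^\ast\in L^q(T,X^\ast)$ the pairing $\langle x^\ast,x\rangle=\int_T\langle x^\ast(t),x(t)\rangle\dmu$ is exactly the duality pairing of $L^p(T,X)$ with its dual, so the right-hand side is precisely $\sup_{x}\{\langle x^\ast,x\rangle - \Intf{f}(x)\}=(\Intf{f})^\ast(x^\ast)$. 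The only subtlety is that the integrand $\langle x^\ast(t),x(t)\rangle - f_t(x(t))$ must have a well-defined integral so the two suprema genuinely coincide; measurability is guaranteed by Lemma \ref{lemma_uniform}, and one splits into positive and negative parts using the upper-integral convention so that the equality of suprema is unaffected.

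Next I would derive the $\eps$-subdifferential formula. The inclusion $\supseteq$ is direct: if $\eta\in\mathcal{Z}(\eps)$ and $x^\ast(t)\in\partial_{\eta(t)}f_t(x(t))$ a.e., then integrating the pointwise inequality $\langle x^\ast(t),y(t)-x(t)\rangle\le f_t(y(t))-f_t(x(t))+\eta(t)$ over $T$ against any $y\in L^p(T,X)$ and using $\int_T\eta\le\eps$ yields $x^\ast\in\partial_\eps\Intf{f}(x)$. For the harder inclusion $\subseteq$, I would use the Fenchel characterization: $x^\ast\in\partial_\eps\Intf{f}(x)$ is equivalent to $\Intf{f}(x)+(\Intf{f})^\ast(x^\ast)\le\langle x^\ast,x\rangle+\eps$, which by the just-proved conjugate formula becomes
\begin{align*}
\int_T\bigl(f_t(x(t))+f_t^\ast(x^\ast(t))-\langle x^\ast(t),x(t)\rangle\bigr)\dmu\le\eps.
\end{align*}
The integrand here is the pointwise Fenchel gap $\gamma(t):=f_t(x(t))+f_t^\ast(x^\ast(t))-\langle x^\ast(t),x(t)\rangle$, which is nonnegative by the Fenchel--Young inequality and measurable by Lemma \ref{lemma_uniform}. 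Thus $\gamma\in L^1(T)$, $\gamma\ge 0$ a.e., and $\int_T\gamma\le\eps$, so setting $\eta:=\gamma$ places $\eta\in\mathcal{Z}(\eps)$; moreover $\gamma(t)\le\eta(t)$ recovers exactly the pointwise relation $x^\ast(t)\in\partial_{\eta(t)}f_t(x(t))$ a.e.

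The main obstacle I anticipate is the rigorous handling of the conjugate identity at the level of \emph{integrals rather than pointwise values}, namely ensuring that the Fenchel gap $\gamma$ is genuinely integrable and nonnegative so that the global inequality $\int_T\gamma\le\eps$ legitimately decomposes into the pointwise $\eps$-subdifferential membership. This requires that $\Intf{f}(x)$ and $\Intf{f^\ast}(x^\ast)$ be finite (or at least that their sum be controlled), which follows because $x^\ast\in\partial_\eps\Intf{f}(x)$ forces $\Intf{f}(x)<+\infty$ and the Fenchel inequality then bounds $\Intf{f^\ast}(x^\ast)$ from above; combined with nonnegativity of $\gamma$ this pins down integrability. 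The Asplund hypothesis enters precisely to guarantee the dual representation $L^p(T,X)^\ast=L^q(T,X^\ast)$ (Radon--Nikod\'ym property), without which the pairing used in the first step would not exhaust the continuous linear functionals.
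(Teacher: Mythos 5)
Your proposal is correct and follows essentially the same route as the paper: both deduce the conjugate formula directly from Theorem \ref{Theorem01} via the identification $L^p(T,X)^\ast = L^q(T,X^\ast)$, and both obtain the $\epsilon$-subdifferential formula by taking $\eta$ to be the pointwise Fenchel gap $f_t(x(t))+f_t^\ast(x^\ast(t))-\langle x^\ast(t),x(t)\rangle$, nonnegative by Young--Fenchel, with the reverse inclusion coming from the standard characterization of $\partial_\epsilon$ (the paper cites Z\u{a}linescu's Theorem 2.4.2 where you integrate the pointwise inequality directly). Your extra attention to the integrability of the gap and to why the two suprema coincide under the upper-integral convention fills in details the paper leaves implicit, but does not change the argument.
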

\begin{proof} The first formula is a direct consequence of Theorem \ref{Theorem01}. To prove the second one, let  $x^\ast \in L^q(T,X)$. Then, by Theorem \ref{Theorem01}, we have that 
\begin{equation*}
x^\ast\in \partial_\epsilon \Intf{f}(x) \iff  \int_T \eta(t) \dmu\leq \epsilon,
\end{equation*}
where $\eta(t) := f_t(x(t)) - \langle x^\ast(t),x(t)\rangle + f^\ast_t(x^\ast(t))\geq 0$ for a.e.  $t\in T$ by virtue of the Young-Fenchel inequality. Hence, $\eta\in\mathcal{Z}(\epsilon)$. Moreover, $\eta(t)\leq 0$ if and only if $x^\ast(t)\in \partial_{\eta(t)}f_t(x(t))$. Hence, we have proved the inclusion $\subset$ in the second formula.  The reverse inclusion follows from \cite[Theorem 2.4.2]{MR1921556}.
\end{proof}

The following result gives a formula for the conjugate of $\Intf{f}$ over $L^\infty(T,X)$ for non-necessarily separable spaces. Hence, we extend  \cite[Theorem 6.4]{levin1975convex} (see also \cite[Theorem 1]{MR4261271}),  where $X$ was assumed to be separable Banach space.  In the following theorem, we consider $\Intf{f}$ defined over $L^\infty(T,X)$.
\begin{theorem}[Conjugate of an integral functional II ]
 Let $X$ be an Asplund space and $f\colon T\times X \to \Rex$ be an  admissible integrand. Assume that  $\Intf{f}(u)<+\infty$ for some $u \in L^\infty(T,X)$. Then, for every $x^\ast \in L^1(T,X^\ast)$ and $\lambda^\ast\in L^\text{sing}(T,X)$, the following formula holds:
\begin{align}\label{form_conj_L_inf}
    (\Intf{f})^\ast(x^\ast +\lambda^\ast )&=	\Intf{f^\ast}(x^\ast) + \sigma_{ \dom 	\Intf{f} } (\lambda^\ast).
\end{align}
Moreover, for all $\epsilon\geq 0$ and $x\in \dom\Intf{f}\cap L^\infty(T,X)$, if $v^\ast\in L^1(T,X^\ast)$ and $\lambda^\ast\in L^{\text{sing}}(T,X)$, then $v^\ast +\lambda^\ast  \in \partial_\epsilon\Intf{f}(x)$ if and only if there exist $\epsilon_1,\epsilon_2\geq 0$ and $\ell\in \mathcal{Z}(\epsilon_1)$ with $\epsilon_1+\epsilon_2\leq \epsilon$ such that $$v^\ast(t)\in\partial_{\ell(t)}f_t(x(t)) \ \mu\text{-a.e. and }\lambda^\ast\in N_{\dom \Intf{f}}^{\epsilon_2}(x). $$
\end{theorem}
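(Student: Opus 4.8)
The plan is to first establish the conjugate formula \eqref{form_conj_L_inf} and then to read off the $\eps$-subdifferential description from it by a pointwise Young--Fenchel analysis. Throughout I abbreviate $F:=\Intf{f}$, and I denote by $(A_n)_{n\in\N}$ an increasing sequence with $T=\bigcup_{n\in\N}A_n$ witnessing that $\lambda^\ast$ is singular, so that $\lambda^\ast|_{A_n\cap A}=0$ whenever $\mu(A)<\infty$. The inequality $\le$ in \eqref{form_conj_L_inf} is the routine one: for every $x\in\dom F$ the pointwise Young--Fenchel inequality gives $\int_T(\langle x^\ast(t),x(t)\rangle-f_t(x(t)))\dmu\le\Intf{f^\ast}(x^\ast)$, while $\langle\lambda^\ast,x\rangle\le\sigma_{\dom F}(\lambda^\ast)$; adding and taking the supremum over $x\in L^\infty(T,X)$ yields $F^\ast(x^\ast+\lambda^\ast)\le\Intf{f^\ast}(x^\ast)+\sigma_{\dom F}(\lambda^\ast)$.

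The reverse inequality is the heart of the argument, and I would obtain it by adapting the construction in the proof of Theorem \ref{Theorem01}. Fix $\alpha<\Intf{f^\ast}(x^\ast)$ and $\beta<\sigma_{\dom F}(\lambda^\ast)$ and pick $w\in\dom F\cap L^\infty(T,X)$ with $\langle\lambda^\ast,w\rangle>\beta$. As in Claim~1 of Theorem \ref{Theorem01}, the integrand $\psi_t(x):=f_t(x)-\langle x^\ast(t),x\rangle$ is admissible, hence a Lusin integrand by Lemma \ref{lemma_uniform}; choosing an integrable $\ell$ with $\ell(t)<f_t^\ast(x^\ast(t))$ a.e. and $\int_T\ell\,\dmu>\alpha$ and applying Theorem \ref{theorem_selection_2} produces a strongly measurable $y$ with $\langle x^\ast(t),y(t)\rangle-f_t(y(t))\ge\ell(t)$ a.e. The decisive point is the set on which $y$ is inserted: using $\sigma$-finiteness write $T=\bigcup_m T_m$ with $T_m\uparrow T$ of finite measure, and for large $n,m$ use Proposition \ref{lusin} to select a compact $K\subset A_n\cap T_m$ with $y|_K$ continuous (hence of compact range) and $\mu((A_n\cap T_m)\setminus K)$ small. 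Setting $z:=w\1_{K^c}+y\1_K$, decomposability of $L^\infty(T,X)$ gives $z\in L^\infty(T,X)$, and since $K\subset A_n$ has finite measure we have $\lambda^\ast|_K=0$, so $\langle\lambda^\ast,z\rangle=\langle\lambda^\ast,w\1_{K^c}\rangle=\langle\lambda^\ast,w\rangle>\beta$. Consequently
\begin{align*}
\langle x^\ast+\lambda^\ast,z\rangle-F(z)=\int_K\bigl(\langle x^\ast(t),y(t)\rangle-f_t(y(t))\bigr)\dmu+\int_{K^c}\bigl(\langle x^\ast(t),w(t)\rangle-f_t(w(t))\bigr)\dmu+\langle\lambda^\ast,w\rangle.
\end{align*}
Since $t\mapsto\langle x^\ast(t),w(t)\rangle-f_t(w(t))$ is integrable (as $w\in\dom F$), the middle integral tends to $0$ and the first to $\int_T\ell\,\dmu>\alpha$ as $K\uparrow T$; hence for $K$ large the right-hand side exceeds $\alpha+\beta$, and letting $\alpha\uparrow\Intf{f^\ast}(x^\ast)$ and $\beta\uparrow\sigma_{\dom F}(\lambda^\ast)$ gives the claim, the cases where either term is $+\infty$ being handled by the same construction while letting the corresponding term diverge. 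I expect the main obstacle to be precisely this simultaneous control: the selection $y$ may be inserted only on finite-measure subsets of the $A_n$ (so that it is invisible to $\lambda^\ast$), yet these subsets must exhaust $T$ to recover the full integral $\Intf{f^\ast}(x^\ast)$, and $y|_K$ must be bounded for the decomposability of $L^\infty$ to apply.

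For the $\eps$-subdifferential formula I would use the identity $v^\ast+\lambda^\ast\in\partial_\eps F(x)\iff F(x)+F^\ast(v^\ast+\lambda^\ast)\le\langle v^\ast+\lambda^\ast,x\rangle+\eps$, valid since $F(x)\in\R$. Substituting \eqref{form_conj_L_inf} and rearranging, this reads
\begin{align*}
\int_T\eta(t)\dmu+\bigl(\sigma_{\dom F}(\lambda^\ast)-\langle\lambda^\ast,x\rangle\bigr)\le\eps,
\end{align*}
where $\eta(t):=f_t(x(t))+f_t^\ast(v^\ast(t))-\langle v^\ast(t),x(t)\rangle\ge0$ by Young--Fenchel and the second bracket is $\ge0$ since $x\in\dom F$. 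Setting $\eps_1:=\int_T\eta\,\dmu$, $\eps_2:=\sigma_{\dom F}(\lambda^\ast)-\langle\lambda^\ast,x\rangle$ and $\ell:=\eta$, one gets $\eps_1+\eps_2\le\eps$, $\ell\in\mathcal{Z}(\eps_1)$; the identity $\eta(t)\le\ell(t)$ is exactly $v^\ast(t)\in\partial_{\ell(t)}f_t(x(t))$ a.e., and $\eps_2=\sigma_{\dom F}(\lambda^\ast)-\langle\lambda^\ast,x\rangle$ means precisely $\lambda^\ast\in N^{\eps_2}_{\dom F}(x)$, which proves the ``only if'' part. Conversely, given such $\eps_1,\eps_2,\ell$, the relation $v^\ast(t)\in\partial_{\ell(t)}f_t(x(t))$ yields $\int_T\eta\,\dmu\le\int_T\ell\,\dmu\le\eps_1$, while $\lambda^\ast\in N^{\eps_2}_{\dom F}(x)$ yields $\sigma_{\dom F}(\lambda^\ast)-\langle\lambda^\ast,x\rangle\le\eps_2$; adding these and invoking \eqref{form_conj_L_inf} recovers $F(x)+F^\ast(v^\ast+\lambda^\ast)\le\langle v^\ast+\lambda^\ast,x\rangle+\eps$, i.e. $v^\ast+\lambda^\ast\in\partial_\eps F(x)$.
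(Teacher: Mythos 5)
Your proof is correct, and its second half (the $\epsilon$-subdifferential characterization via $F(x)+F^\ast(v^\ast+\lambda^\ast)-\langle v^\ast+\lambda^\ast,x\rangle\leq\epsilon$, splitting the excess into $\eta\in L^1$ and $\sigma_{\dom \Intf{f}}(\lambda^\ast)-\langle\lambda^\ast,x\rangle$) is the paper's argument verbatim, only with the converse direction spelled out rather than left implicit. For the hard inequality in \eqref{form_conj_L_inf} you use the same key mechanism as the paper --- singularity makes $\lambda^\ast$ blind to modifications on finite-measure subsets of the $A_n$, so one can approach $\Intf{f^\ast}(x^\ast)$ and $\sigma_{\dom\Intf{f}}(\lambda^\ast)$ simultaneously --- but you execute it differently: the paper invokes Theorem \ref{Theorem01} as a black box to produce an $L^\infty$ near-maximizer $x_1$ of the sup formula, then splices $x_1$ with a near-maximizer $x_2$ of the support function via $y_n=x_1\1_{A\cap A_n}+x_2\1_{(A\cap A_n)^c}$, controlling four tail integrals over $A^c$ and $A\cap A_k^c$; you instead inline the proof of Theorem \ref{Theorem01}, applying Claim~1 (admissibility of $\psi_t=f_t-\langle x^\ast(t),\cdot\rangle$), Theorem \ref{theorem_selection_2} and Proposition \ref{lusin} to splice the Lusin-truncated raw selection $y\1_K$, $K\subset A_n\cap T_m$ compact, with a single test function $w$. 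The paper's route is shorter given Theorem \ref{Theorem01} and keeps all test functions in $L^\infty$ from the outset; yours makes explicit why the insertion set can be taken of finite measure and with $y$ of compact range, so that decomposability of $L^\infty$ and invisibility to $\lambda^\ast$ hold for the same set $K$ --- the point you correctly flag as the crux --- at the cost of redoing the selection argument. Both treatments of the exhaustion ($\int_K\ell\to\int_T\ell$ and the vanishing of $\int_{K^c}$ of the integrable function $t\mapsto\langle x^\ast(t),w(t)\rangle-f_t(w(t))$, after discarding the trivial cases $\Intf{f}(w)=-\infty$ or infinite right-hand side) are sound.
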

\begin{proof}
It is clear that $(\Intf{f})^\ast(x^\ast +\lambda^\ast )\leq \Intf{f^\ast}(x^\ast) + \sigma_{\dom \Intf{f}}(\lambda^\ast)$. To prove the opposite inequality, let us consider $\epsilon >0$, and $r_1,r_2 \in \R$ such that $r_1 <\Intf{f^\ast}(x^\ast ) $ and $r_2 < \sigma_{\dom \Intf{f}}(\lambda^\ast)$. Hence,  we can find $x_1,x_2\in \dom \Intf{f} $ such that $\langle x^\ast,x_1\rangle-\Intf{f}(x_1)\geq r_1$  and $\langle\lambda^\ast,x_2\rangle\geq  r_2$. 
     Since $x_1,x_2\in \dom \Intf{f}$, there exists $A\in \mathcal{A}$ of finite measure such that for $i=1,2$
     \begin{equation*}
         \int_{A^c} |\langle x^\ast(t),x_i(t)\rangle-f_t(x_i(t))|\dmu <\epsilon.
     \end{equation*}
    Since $\lambda^\ast$ is singular, there exists and increasing sequence $(A_n)\subset \mathcal{A}$ such that $\mu(T \backslash \bigcup_{n\in\N} A_n)=0$ and for all $B\in \mathcal{A}$ of finite measure, $\lambda^\ast|_{A_n\cap B} = 0$ for all $n\in \N$. Next, define $y_n(t)=x_1(t)\1_{A\cap A_n}(t)+x_2(t)\1_{(A\cap A_n)^c}(t)$. Then, $\langle\lambda^\ast,y_n\rangle = \langle\lambda^\ast,x_2\rangle$ for all $n\in\N$. Moreover, since  $\mu(A\cap A_n^c)$ converges to $0$, we can find  $k\in \mathbb{N}$ such that for $i=1,2$ $$ \int_{A\cap A_k^c}|\langle x^\ast(t),x_i(t)\rangle - f_t(x_i(t))| \dmu<\epsilon. $$
    Then, by definition of conjugate,
    \begin{eqnarray}
        \nonumber (\Intf{f})^\ast(x^\ast+\lambda^\ast) \geq  \langle x^\ast+\lambda^\ast,y_k\rangle -\Intf{f}(y_k)
        \nonumber =\langle \lambda^\ast,x_2\rangle+\langle x^\ast,y_k\rangle -\Intf{f}(y_k).
    \end{eqnarray}
Moreover, if $h_i(t):=\langle x^\ast(t),x_i(t)\rangle - f_t(x_i(t)) \dmu$ for $i=1,2$, we obtain that
\begin{equation*}
    \begin{aligned}
    \langle x^\ast,y_k\rangle -\Intf{f}(y_k)&=  \int_{A\cap A_k} h_1(t)\dmu+\int_{(A\cap A_k)^c} h_2(t)\dmu \\
&= \langle x^\ast,x_1\rangle-\Intf{f}(x_1) - \int_{A^c} h_1(t)\dmu- \int_{A\cap A_k^c} h_1(t)\dmu\\
&+ \int_{A^c} h_2(t)\dmu +\int_{A\cap A_k^c} h_2(t)\dmu\\
&\geq \langle x^\ast,x_1\rangle-\Intf{f}(x_1)-4\epsilon.
    \end{aligned}
\end{equation*}
    Therefore, $   (\Intf{f})^\ast(x^\ast+\lambda^\ast)\geq \langle \lambda^\ast,x_2\rangle+\langle x^\ast,x_1\rangle-\Intf{f}(x_1)-4\epsilon  \geq r_1 +r_2 - 4\epsilon$. Since, $r_1,r_2$ and $\epsilon$ are arbitrary, we obtain formula \eqref{form_conj_L_inf}.  \newline 
    To prove the last assertion, let us consider $\epsilon \geq 0$ and $x\in\dom\Intf{f}$. Then, by definition and formula \eqref{form_conj_L_inf}, we have that
    \begin{eqnarray*}
        v^\ast+\lambda^\ast \in\partial_\epsilon\Intf{f}(x) &\iff& \Intf{f}(x) + (\Intf{f})^\ast(v^\ast+\lambda^\ast)-\langle v^\ast+\lambda^\ast,x\rangle\leq\epsilon\\
        &\iff & \Intf{f}(x) + \Intf{f^\ast}(x^\ast)-\langle v^\ast,x\rangle + \sigma_{\dom\Intf{f}}(\lambda^\ast)-\langle\lambda^\ast,x\rangle\leq \epsilon.
    \end{eqnarray*}
    It is enough to  set $\epsilon_1 := \Intf{f}(v) + \Intf{f^\ast}(v^\ast)-\langle v^\ast,v\rangle$,  $\epsilon_2 := \sigma_{\dom\Intf{f}}(\lambda^\ast)-\langle\lambda^\ast,x\rangle$ and  $\ell(t):=f_t(v(t)) +f^\ast_t(v^\ast(t))-\langle v^\ast(t),v(t)\rangle$ to conclude the proof.
\end{proof}

 As an application of the above theorem, we can compute the subdifferential and conjugate of   the  \emph{expected functionals} $E_f\colon X \to \Rex$ defined by $\Expf{f}(x):=\int_T f_t(x) \dmu$, where $f$ is a Lusin integrand with  $f_t\in \Gamma_0(X)$ for all $t\in T$. These functionals appear commonly in stochastic programming, where they are used to study  the expected formulation of mathematical programming problems with uncertainty (see, e.g.,  \cite{MR4261271,MR3947674,MR4062793,MR4295325,MR4350897}). Before presenting these results, we need a sufficient condition for the continuity of $	\Intf{f}$ over $L^{\infty}(T,X)$.
\begin{lemma}\label{lemma1} Assume that $X$ is an Asplund space and consider a Lusin integrand $f\colon T\times X \to \Rex$ with $f_t\in \Gamma_0(X)$ for all $t\in T$. Let $x_0 \in L^\infty(T,X)$ such that $\Intf{f}(x_0) \in \mathbb{R}$ and there exist $k\in L^1(T)$  and $\eta >0$ such that 
      \begin{equation*}
          f_t(x ) \leq k(t), \textrm{ for all } x\in \mathbb{B}_{\eta}(x_0(t)) \textrm{ and all } t\in T.
      \end{equation*}
      Then, $\Intf{f}$ is continuous at $x_0$ with respect to the topology of $L^\infty(T,X)$.
\end{lemma}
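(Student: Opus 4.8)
The plan is to reduce the continuity of $\Intf{f}$ at $x_0$ to an integrable, uniform-in-$t$ Lipschitz estimate for the integrand, obtained from the elementary principle that a convex function bounded above on a ball is Lipschitz on the concentric half-ball. First I would record the pointwise estimate: if $g\in\Gamma_0(X)$ satisfies $g\le M$ on $\mathbb{B}_r(\bar x)$ and $g(\bar x)\in\R$, then $g$ is bounded below on $\mathbb{B}_r(\bar x)$ by $2g(\bar x)-M$ (use $\bar x=\tfrac12 x+\tfrac12(2\bar x-x)$ and convexity, noting $2\bar x-x\in\mathbb{B}_r(\bar x)$), and for $x,y\in\mathbb{B}_{r/2}(\bar x)$ one has
$$
|g(x)-g(y)|\le \frac{4\bigl(M-g(\bar x)\bigr)}{r}\,\|x-y\|,
$$
the standard argument being to push $y$ a distance $r/2$ past itself along the direction $y-x$ so that it lands in $\mathbb{B}_r(\bar x)$, and to apply convexity on the resulting segment together with the lower bound. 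Applying this with $g=f_t$, $\bar x=x_0(t)$, $M=k(t)$ and $r=\eta$ yields, for each $t$, a Lipschitz constant $L(t):=\tfrac{4}{\eta}\bigl(k(t)-f_t(x_0(t))\bigr)\ge 0$ of $f_t$ on $\mathbb{B}_{\eta/2}(x_0(t))$.

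The next step is to verify that $L\in L^1(T)$. Since $f$ is a Lusin integrand with $f_t\in\Gamma_0(X)$ (hence lsc) and $x_0$ is strongly measurable, Lemma \ref{lemma_lusin} gives that $t\mapsto f_t(x_0(t))$ is $\mathcal{A}$-measurable; moreover $f_t(x_0(t))\le k(t)$ (as $x_0(t)\in\mathbb{B}_\eta(x_0(t))$) with $k\in L^1(T)$, while $\int_T f_t(x_0(t))\dmu=\Intf{f}(x_0)\in\R$ forces both the positive and negative parts of $t\mapsto f_t(x_0(t))$ to be integrable. Hence $t\mapsto f_t(x_0(t))$ is integrable, and therefore $L$ is a nonnegative element of $L^1(T)$.

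Finally I would integrate. For $x\in L^\infty(T,X)$ with $\|x-x_0\|_\infty<\eta/2$ we have $x(t)\in\mathbb{B}_{\eta/2}(x_0(t))$ for a.e.\ $t$, so the pointwise Lipschitz bound gives $|f_t(x(t))-f_t(x_0(t))|\le L(t)\,\|x-x_0\|_\infty$ a.e. In particular $t\mapsto f_t(x(t))$ is measurable (again by Lemma \ref{lemma_lusin}) and within $L^1$-distance of the integrable map $t\mapsto f_t(x_0(t))$, so $\Intf{f}(x)\in\R$; integrating the bound yields
$$
\left|\Intf{f}(x)-\Intf{f}(x_0)\right|\le \|x-x_0\|_\infty\int_T L(t)\dmu.
$$
Since $\int_T L\dmu<\infty$, this shows $\Intf{f}$ is locally Lipschitz, hence continuous, at $x_0$ for the $L^\infty$-topology. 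The main obstacle is the middle step: one must produce a Lipschitz constant valid for a.e.\ $t$ \emph{simultaneously} on a ball whose radius is independent of $t$, \emph{and} integrable in $t$; this is exactly where the hypotheses $k\in L^1(T)$, the finiteness of $\Intf{f}(x_0)$, and the measurability from Lemma \ref{lemma_lusin} combine, for without the a.e.\ finiteness and integrability of $t\mapsto f_t(x_0(t))$ the constant $L$ would fail to lie in $L^1(T)$.
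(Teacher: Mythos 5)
Your proof is correct, but it takes a genuinely different route from the paper. The paper's proof is a two-line appeal to abstract convex analysis: it observes that $\Intf{f}$ is a proper convex function on $L^\infty(T,X)$ which is bounded above by $\int_T k\,\dmu$ on the ball of radius $\eta$ around $x_0$ (since $\|x-x_0\|_\infty\leq\eta$ gives $f_t(x(t))\leq k(t)$ a.e.), and then invokes the classical result that a proper convex function bounded above on a neighbourhood of a point is continuous there \cite[Corollary 2.2.13]{MR1921556}. You instead run the finite-dimensional-style argument at the level of the integrand: the quantitative fact that $g\in\Gamma_0(X)$ with $g\leq M$ on $\mathbb{B}_r(\bar x)$ is Lipschitz on $\mathbb{B}_{r/2}(\bar x)$ with constant $\tfrac{4}{r}(M-g(\bar x))$ (your derivation of the lower bound via $\bar x=\tfrac12 x+\tfrac12(2\bar x-x)$ and of the constant via the point $z=y+\tfrac{r/2}{\|y-x\|}(y-x)$ is the standard one and checks out), applied with $M=k(t)$, $r=\eta$, $\bar x=x_0(t)$, and then integrated; the verification that $L(t)=\tfrac{4}{\eta}\bigl(k(t)-f_t(x_0(t))\bigr)$ lies in $L^1(T)$ is sound, since Lemma \ref{lemma_lusin} gives measurability of $t\mapsto f_t(x_0(t))$, properness of $f_t$ plus the bound $f_t(x_0(t))\leq k(t)$ gives pointwise finiteness, and $\Intf{f}(x_0)\in\R$ under the paper's upper-integral convention forces integrability of both parts. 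What your route buys is an explicit local Lipschitz modulus $\|x-x_0\|_\infty\int_T L\,\dmu$ (strictly more than continuity) and independence from the cited functional-analytic result; in particular you never need to verify that $\Intf{f}$ is proper on all of $L^\infty(T,X)$, which the paper's appeal to \cite[Corollary 2.2.13]{MR1921556} implicitly requires (there it would follow from a global affine minorant built from a subgradient of $f_t$ at $x_0(t)$, a step the paper leaves silent). What the paper's route buys is brevity. Neither proof uses the Asplund hypothesis, which is present only for the results that follow the lemma.
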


\begin{proof} Under the assumptions of the lemma, $\Intf{f}$ is a proper convex   function, which is bounded from above on a neighbourhood   of $x_0$.  Then, then the result follows from \cite[Corollary 2.2.13]{MR1921556}.
\end{proof}

\begin{theorem}\label{subdif_ef}
 Assume that  $X$ is an Asplund space and consider a Lusin integrand $f\colon T\times X \to \Rex$ with $f_t\in \Gamma_0(X)$ for all $t\in T$. Let $x_0 \in X$ such that $\Expf{f}(x_0) \in \mathbb{R}$ and there exist  $k\in L^1(T)$ and $\eta>0$ such that  $f_t(x)\leq k(t)$ for all $x\in \mathbb{B}_\eta(x_0)$ and all $t\in T$. Then, for all $x\in \dom E_f$ and $\epsilon \geq 0$
      \begin{align*}
      \partial_\epsilon\Expf{f}(x) = \bigcup_{\substack{\epsilon_1,\epsilon_2\geq 0 \\ \ell\in\mathcal{Z}(\epsilon_1) \\ \epsilon_1+\epsilon_2\leq\epsilon}}\int_T \partial_{\ell(t)} f_t (x) \dmu +N_{\dom \Expf{f}}^{\epsilon_2} (x), 
      \end{align*} where $\int _T \partial_{\eta(t)} f_t(x)\dmu$ is defined by  
      $$ \left\{ \int_T \langle x^\ast(t),\,\cdot\,\rangle \dmu \in X^\ast  \colon  x^\ast\in L^1(T,X) \textrm{ and } x^\ast(t)\in\partial_{\eta(t)} f_t(x) \textrm{ for }   \mu\text{-a.e.} 
         \right\}.$$
\end{theorem}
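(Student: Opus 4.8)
The plan is to reduce the statement to the formula for $\partial_\epsilon\Intf{f}$ over $L^\infty(T,X)$ obtained in the preceding theorem (Conjugate of an integral functional II), by realizing $\Expf{f}$ as the composition of $\Intf{f}$ with the isometric embedding $\iota\colon X\to L^\infty(T,X)$ that sends $x$ to the constant function $\iota(x)\equiv x$. Indeed, $\Expf{f}(x)=\int_T f_t(x)\dmu=\Intf{f}(\iota(x))$, so $\Expf{f}=\Intf{f}\circ\iota$ with $\iota$ a continuous linear operator. Consequently $\dom\Expf{f}=\iota^{-1}(\dom\Intf{f})$, which is convex because $\Intf{f}$ is convex; this also makes the $\epsilon$-normal cones of $\dom\Expf{f}$ and $\dom\Intf{f}$ meaningful.

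The key analytical input is the $\epsilon$-subdifferential chain rule for a convex function composed with a continuous linear map, valid under a constraint qualification. Here the qualification is supplied by Lemma \ref{lemma1}: under the stated domination hypothesis $\Intf{f}$ is continuous at $\iota(x_0)$ for the topology of $L^\infty(T,X)$, so in particular $\iota(x_0)\in\Inte\dom\Intf{f}\cap\operatorname{range}(\iota)$. Via Fenchel duality this gives the exact representation $(\Intf{f}\circ\iota)^\ast(x^\ast)=\min\{(\Intf{f})^\ast(\xi^\ast)\colon \iota^\ast\xi^\ast=x^\ast\}$ with attainment (see, e.g., \cite{MR1921556}), which translates into
\begin{equation*}
\partial_\epsilon\Expf{f}(x)=\iota^\ast\,\partial_\epsilon\Intf{f}(\iota(x))\quad\text{for all }x\in\dom\Expf{f},\ \epsilon\ge 0.
\end{equation*}
First I would establish this identity, taking care that it is the continuity point $x_0$, not the base point $x$, that licenses the constraint qualification.

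It then remains to transport the description of $\partial_\epsilon\Intf{f}(\iota(x))$ through $\iota^\ast$, using the decomposition $L^\infty(T,X)^\ast=L^1(T,X^\ast)\oplus L^{\text{sing}}(T,X)$. By the preceding theorem every element of $\partial_\epsilon\Intf{f}(\iota(x))$ has the form $v^\ast+\lambda^\ast$ with $v^\ast(t)\in\partial_{\ell(t)}f_t(x)$ for $\mu$-a.e. $t$ (recall $\iota(x)(t)=x$), $\ell\in\mathcal{Z}(\epsilon_1)$, $\lambda^\ast\in N^{\epsilon_2}_{\dom\Intf{f}}(\iota(x))$ and $\epsilon_1+\epsilon_2\le\epsilon$. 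For the absolutely continuous part a direct computation gives $\langle\iota^\ast v^\ast,x'\rangle=\int_T\langle v^\ast(t),x'\rangle\dmu$, so $\iota^\ast v^\ast=\int_T v^\ast(t)\dmu\in\int_T\partial_{\ell(t)}f_t(x)\dmu$. For the singular part, rather than computing $\iota^\ast\lambda^\ast$ explicitly I would apply the same chain rule to the indicator functions: since $\delta_{\dom\Expf{f}}=\delta_{\dom\Intf{f}}\circ\iota$ and $\delta_{\dom\Intf{f}}$ vanishes on a neighbourhood of $\iota(x_0)\in\operatorname{range}(\iota)$ (because $\iota(x_0)\in\Inte\dom\Intf{f}$), hence is continuous there, one gets $N^{\epsilon_2}_{\dom\Expf{f}}(x)=\iota^\ast N^{\epsilon_2}_{\dom\Intf{f}}(\iota(x))$. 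Combining the two parts yields both inclusions of the claimed formula: the forward one by applying $\iota^\ast$ termwise, and the reverse one by lifting a given element of $N^{\epsilon_2}_{\dom\Expf{f}}(x)$ to some $\lambda^\ast\in N^{\epsilon_2}_{\dom\Intf{f}}(\iota(x))$ and invoking the preceding theorem.

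The hard part will be the correct treatment of the singular component of $L^\infty(T,X)^\ast$ under $\iota^\ast$: unlike the $L^1$-part, $\iota^\ast\lambda^\ast$ carries no transparent integral representation, and the argument must avoid computing it directly. The device of applying the chain rule to $\delta_{\dom\Intf{f}}$ circumvents this, but it hinges on $\iota(x_0)$ being an \emph{interior} point of $\dom\Intf{f}$, which is precisely the content of Lemma \ref{lemma1}; checking that this single interior/continuity point simultaneously legitimizes both chain-rule applications is the delicate step.
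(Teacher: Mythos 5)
Your proposal is correct and takes essentially the same route as the paper's own proof: factor $\Expf{f}=\Intf{f}\circ\Phi$ through the constant embedding $\Phi\colon X\to L^\infty(T,X)$, invoke Lemma \ref{lemma1} for the continuity point that licenses the exact $\epsilon$-subdifferential chain rule $\partial_\epsilon \Expf{f}(x)=\Phi^\ast\partial_\epsilon\Intf{f}(\Phi(x))$, and transport the $L^1(T,X^\ast)\oplus L^{\text{sing}}(T,X)$ description of $\partial_\epsilon\Intf{f}(\Phi(x))$ from the preceding theorem through $\Phi^\ast$. Your additional step---applying the same chain rule to $\delta_{\dom\Intf{f}}\circ\Phi=\delta_{\dom \Expf{f}}$ so as to identify $\Phi^\ast N^{\epsilon_2}_{\dom\Intf{f}}(\Phi(x))$ with $N^{\epsilon_2}_{\dom \Expf{f}}(x)$ without computing $\Phi^\ast$ on singular functionals---is precisely the detail the paper leaves implicit in ``the above equalities yield the result,'' so it fills in rather than departs from the published argument.
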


\begin{proof}
  Consider the continuous linear map  $\Phi\colon X\to L^\infty(T,X)$ given by $\Phi(x) = \phi_x$,  where  $\phi_x\colon T\to X$ is defined by $ \phi_x(t) = x$. Then, we have that $E_f = \Intf{f}\circ \Phi$. Hence, by virtue of  Lemma \ref{lemma1}, $\Intf{f}$ is continuous at $\phi_{x_0}$. Moreover, by the Chain Rule for the $\epsilon$-subdifferential (see, e.g. \cite[Theorem 2.8.1]{MR1921556}), we get that  
     \begin{align*}
     \partial_\epsilon E_f(x) = \partial_\epsilon (\Intf{f}\circ \Phi)(x) = \Phi^\ast\partial_\epsilon \Intf{f}(\Phi (x)) \textrm{ for all } x\in \dom E_f.
     \end{align*}
    Moreover, if $x^\ast\in L^1(T,X)$ and $\lambda^\ast\in L^\infty(T,X)^\ast$ is a singular functional, then  
     \begin{align*}
     \Phi^\ast(x^\ast+\lambda^\ast) = \int_T \langle x^\ast(t),\,\cdot\,\rangle \dmu + \Phi^\ast(\lambda^\ast),
     \end{align*}
     where $\Phi^\ast$ is the adjoint of $\Phi$. Finally, the above equalities yield the result.
\end{proof}

The next result provides a formula for the conjungate to $E_f$. The next corollary  extends \cite[Theorem 23]{MR0373611}.

\begin{corollary}
In addition to the hypotheses of Theorem \ref{subdif_ef}, assume that  $\dom\Intf{f} = L^\infty(T,X)$.  Then,   
     \begin{align*} (E_f)^\ast(x^\ast) = \inf \{ \Intf{f^\ast}(v^\ast)\colon v^\ast\in L^1(T,X^\ast) \text{ and } \int_T v^\ast(t)\dmu = x^\ast  \}.
     \end{align*} 
\end{corollary}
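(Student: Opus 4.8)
The plan is to exploit the factorization $\Expf{f}=\Intf{f}\circ\Phi$ obtained in the proof of Theorem~\ref{subdif_ef}, where $\Phi\colon X\to L^\infty(T,X)$ is the continuous linear embedding $\Phi(x)=\phi_x$, $\phi_x(t)\equiv x$, and then to dualize this composition. The key point enabling an exact dual formula is that, by Lemma~\ref{lemma1}, the functional $\Intf{f}$ is continuous at the point $\phi_{x_0}$, which belongs to the range of $\Phi$; this is precisely the qualification condition under which the conjugate of a composition with a continuous linear map \cite{MR1921556} is given by
\begin{equation*}
(\Expf{f})^\ast(x^\ast)=(\Intf{f}\circ\Phi)^\ast(x^\ast)=\inf\left\{(\Intf{f})^\ast(\xi)\colon \xi\in L^\infty(T,X)^\ast,\ \Phi^\ast\xi=x^\ast\right\}.
\end{equation*}

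To evaluate this infimum, I would write each $\xi\in L^\infty(T,X)^\ast$ in its decomposition $\xi=v^\ast+\lambda^\ast$ with $v^\ast\in L^1(T,X^\ast)$ and $\lambda^\ast\in L^{\text{sing}}(T,X)$. By the conjugate formula \eqref{form_conj_L_inf}, one has $(\Intf{f})^\ast(v^\ast+\lambda^\ast)=\Intf{f^\ast}(v^\ast)+\sigma_{\dom\Intf{f}}(\lambda^\ast)$. Since by hypothesis $\dom\Intf{f}=L^\infty(T,X)$ is the whole space, its support function satisfies $\sigma_{L^\infty(T,X)}(\lambda^\ast)=0$ if $\lambda^\ast=0$ and $\sigma_{L^\infty(T,X)}(\lambda^\ast)=+\infty$ otherwise. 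Hence every $\xi$ with a nonzero singular part contributes $+\infty$ to the infimum, regardless of the constraint, so it suffices to minimize over purely absolutely continuous functionals $\xi=v^\ast\in L^1(T,X^\ast)$ subject to $\Phi^\ast v^\ast=x^\ast$, for which the objective equals $\Intf{f^\ast}(v^\ast)$.

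It remains to identify the constraint $\Phi^\ast\xi=x^\ast$ in this reduced form. As computed in the proof of Theorem~\ref{subdif_ef}, for $v^\ast\in L^1(T,X^\ast)$ one has $\langle\Phi^\ast v^\ast,x\rangle=\langle v^\ast,\phi_x\rangle=\int_T\langle v^\ast(t),x\rangle\dmu=\langle\int_T v^\ast(t)\dmu,x\rangle$, so that $\Phi^\ast v^\ast=\int_T v^\ast(t)\dmu$. Substituting, the constraint becomes $\int_T v^\ast(t)\dmu=x^\ast$, and the displayed infimum reduces exactly to $\inf\{\Intf{f^\ast}(v^\ast)\colon v^\ast\in L^1(T,X^\ast),\ \int_T v^\ast(t)\dmu=x^\ast\}$, which is the claimed formula.

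I expect the main obstacle to be justifying the exact dual composition formula with no duality gap. The easy inequality $(\Expf{f})^\ast(x^\ast)\le\Intf{f^\ast}(v^\ast)$ for any feasible $v^\ast$ follows directly by restricting the supremum defining $(\Expf{f})^\ast$ to constant functions, using $\langle v^\ast,\phi_x\rangle=\langle x^\ast,x\rangle$ and $\Intf{f}(\phi_x)=\Expf{f}(x)$ together with \eqref{form_conj_L_inf}. The reverse inequality, however, is where the continuity of $\Intf{f}$ at $\phi_{x_0}$ furnished by Lemma~\ref{lemma1} is indispensable, exactly as in the subdifferential chain rule already invoked in the proof of Theorem~\ref{subdif_ef}; this is the step that must be argued carefully.
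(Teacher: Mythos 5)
Your proposal is correct and follows essentially the same route as the paper: the paper likewise dualizes the composition $\Expf{f}=\Intf{f}\circ\Phi$ via the conjugate chain rule of \cite[Theorem 2.8.3]{MR1921556} (with the qualification supplied by Lemma \ref{lemma1}), decomposes dual elements as $v^\ast+\lambda^\ast$ with $\lambda^\ast\in L^{\text{sing}}(T,X)$, and uses \eqref{form_conj_L_inf} together with $\sigma_{\dom\Intf{f}}=\delta_{\{0\}}$ (since $\dom\Intf{f}=L^\infty(T,X)$) to kill the singular part and reduce the constraint to $\int_T v^\ast(t)\dmu=x^\ast$. Your write-up is in fact slightly more explicit than the paper's, spelling out the identification $\Phi^\ast v^\ast=\int_T v^\ast(t)\dmu$ and the role of the continuity qualification, but there is no substantive difference.
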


\begin{proof} By  \cite[Theorem 2.8.3]{MR1921556}, we obtain that   
\begin{equation*}
    (E_f)^\ast(x^\ast) = (\Intf{f}\circ \Phi)^\ast(x^\ast)=\inf\, (\Intf{f})^\ast(v^\ast + \lambda^\ast),
\end{equation*}
where the infimum is taken over all $v^\ast\in L^1(T,X^\ast)$ and $\lambda^\ast\in L^{\text{sing}}(T,X)$ such that   $\Phi^\ast(v^\ast + \lambda^\ast) = x^\ast$. Here $\Phi$    was defined in the proof of Theorem \ref{subdif_ef}.  Finally,  since $\dom\Intf{f} = L^\infty(T,X)$, i.e., $\sigma_{\dom \Intf{f}} =\delta_{ \{0 \}}$,  we obtain the result.
\end{proof}

\section{Clarke Subdifferential of Integral Functionals}\label{sect-7}
In this section, we characterize the Clarke subdifferential of integral functionals of not necessarily convex integrands. 

\begin{theorem}\label{theo_clarke} 
 Let $X$ be  an Asplund space and consider $p\in [1,\infty)$ and $q \in (1,+\infty]$ with $\frac{1}{p}+\frac{1}{q}=1$. Let $f\colon T\times X\to \R$ be a Lusin integrand for which there exists $k\in L^q(T;\mathbb{R}_+)$ such that  $f_t $ is $k(t)$-Lipschitz for all $t\in T$. Let $ x\in L^p(T,X)$ be such that the function $(t,v)\mapsto df_t(x(t))(v)$ is a Lusin integrand, then  
    \begin{equation*}
    d\Intf{f}(x)(v) \leq \int_T  df_t(x(t))(v(t))\dmu \textrm{ for all }  v\in L^p(T,X).
    \end{equation*} Moreover, the following inclusion holds 
    \begin{eqnarray*}
        \overline{\partial}\mathcal{I}_f(x)\subset  \{ x^\ast\in L^q(T,X^\ast)\colon x^\ast(t)\in \overline{\partial} f_t(x(t)) \ \mu\text{-a.e.} \}.
    \end{eqnarray*}
\end{theorem}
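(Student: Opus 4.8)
The plan is to first record that $\Intf{f}$ is globally Lipschitz on $L^p(T,X)$, so that its Clarke directional derivative and subdifferential are meaningful, then to prove the directional inequality, and finally to deduce the subdifferential inclusion from it by a pointwise localization argument. For $y,z\in L^p(T,X)$, the $k(t)$-Lipschitz property of $f_t$ and Hölder's inequality give $|\Intf{f}(y)-\Intf{f}(z)|\le \int_T k(t)\|y(t)-z(t)\|\dmu\le \|k\|_q\|y-z\|_p$, so $\Intf{f}$ is Lipschitz with constant $\|k\|_q$ (in particular locally Lipschitz and finite near $x$; measurability and integrability of $t\mapsto f_t(x(t))$ follow from Lemma \ref{lemma_lusin}). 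Since $X$ is Asplund and $p<\infty$, we use $L^p(T,X)^\ast=L^q(T,X^\ast)$, so each $x^\ast\in\overline{\partial}\Intf{f}(x)$ acts by $\langle x^\ast,v\rangle=\int_T\langle x^\ast(t),v(t)\rangle\dmu$.

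\textbf{The directional inequality.} By definition of $d\Intf{f}(x)(v)$ I would choose sequences $y_n\to x$ in $L^p(T,X)$ and $s_n\downarrow 0$ with $s_n^{-1}[\Intf{f}(y_n+s_nv)-\Intf{f}(y_n)]\to d\Intf{f}(x)(v)$, and pass to a subsequence along which $y_n(t)\to x(t)$ for a.e.\ $t$. Writing $\Delta_n(t):=s_n^{-1}[f_t(y_n(t)+s_nv(t))-f_t(y_n(t))]$, the Lipschitz bound gives $\Delta_n(t)\le k(t)\|v(t)\|$ with $k\|v\|\in L^1$ (Hölder), so the reverse Fatou lemma yields $d\Intf{f}(x)(v)=\limsup_n\int_T\Delta_n\dmu\le\int_T\limsup_n\Delta_n\dmu$. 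For a.e.\ $t$, the pair $(y_n(t),s_n)$ is admissible in the definition of the Clarke directional derivative of $f_t$ at $x(t)$, hence $\limsup_n\Delta_n(t)\le df_t(x(t))(v(t))$; measurability of the right-hand side follows from Lemma \ref{lemma_lusin}, since $w\mapsto df_t(x(t))(w)$ is $k(t)$-Lipschitz and therefore lsc. This proves the first inequality.

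\textbf{From the integral bound to the pointwise inclusion.} For $x^\ast\in\overline{\partial}\Intf{f}(x)$, combining $\langle x^\ast,v\rangle\le d\Intf{f}(x)(v)$ with the first part gives $\int_T[\langle x^\ast(t),v(t)\rangle-df_t(x(t))(v(t))]\dmu\le 0$ for all $v\in L^p(T,X)$. Set $\rho(t,w):=df_t(x(t))(w)-\langle x^\ast(t),w\rangle$; since the pairing $(t,w)\mapsto\langle x^\ast(t),w\rangle$ is Scorza--Dragoni (Proposition \ref{lusin} applied to $x^\ast$), Proposition \ref{binary_prop} shows $\rho$ is a Lusin integrand, each $\rho_t$ is finite, convex and \emph{positively homogeneous} with $\rho_t(0)=0$, and being Lusin with $\rho_t$ proper convex lsc it is admissible. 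Homogeneity forces $m_\rho(t):=\inf_w\rho(t,w)\in\{0,-\infty\}$ (and $m_\rho$ is measurable, being $-\rho^\ast_t(0)$, by Lemma \ref{lemma_lusin}), while $m_\rho(t)=0$ is exactly the condition $x^\ast(t)\in\overline{\partial}f_t(x(t))$. Arguing by contradiction, if $A:=\{t:m_\rho(t)=-\infty\}$ had positive measure, I would pick $A_0\subset A$ with $0<\mu(A_0)<\infty$ and apply Theorem \ref{theorem_selection_2} with $\alpha\equiv-1$ to get a strongly measurable $w\colon A_0\to X$ with $\rho(t,w(t))\le-1$ a.e.; normalizing $v:=\tfrac{w}{1+\|w\|}\1_{A_0}$ keeps $\|v\|<1$ on the finite-measure set $A_0$, so $v\in L^p(T,X)$, while positive homogeneity gives $\rho(t,v(t))<0$ a.e.\ on $A_0$, whence $\int_T[\langle x^\ast(t),v(t)\rangle-df_t(x(t))(v(t))]\dmu=-\int_{A_0}\rho(t,v(t))\dmu>0$, a contradiction. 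Hence $\mu(A)=0$ and the inclusion follows.

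\textbf{Main obstacle.} The delicate step is this last localization: in a nonseparable $X$ one cannot test the integral inequality against a countable dense set of directions, so the pointwise inclusion must instead be recovered through the measurable selection of Theorem \ref{theorem_selection_2} applied to the admissible integrand $\rho$. The positive homogeneity of $\rho_t$ plays a double role—it collapses the ``for all $w$'' condition into the single scalar condition $m_\rho(t)=0$, and it lets the selection be renormalized into $L^p(T,X)$ without destroying its strict sign. One must also keep all the relevant integrals well defined, which is ensured throughout by the domination $k\|v\|\in L^1$ coming from the Lipschitz bound and Hölder's inequality.
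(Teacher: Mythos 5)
Your proof is correct. The first half (reverse Fatou with the integrable majorant $k(\cdot)\|v(\cdot)\|$ obtained from the $k(t)$-Lipschitz bound and H\"older) is essentially identical to the paper's argument, except that you are more careful in extracting an a.e.-convergent subsequence $y_n(t)\to x(t)$, a step the paper glosses over. The second half takes a genuinely different route. The paper observes that $x^\ast\in\overline{\partial}\Intf{f}(x)$ forces $x^\ast\in\partial\Intf{g}(0)$ for the \emph{convex} integrand $g_t(u):=df_t(x(t))(u)$, and then simply invokes Corollary \ref{cor1} with $\epsilon=0$ (where $\mathcal{Z}(0)$ reduces to the zero function), so the pointwise inclusion $x^\ast(t)\in\partial g_t(0)=\overline{\partial}f_t(x(t))$ a.e.\ drops out of the already-established conjugate machinery of Theorem \ref{Theorem01}. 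You instead bypass Corollary \ref{cor1} and redo the localization by hand: you form $\rho_t(w)=g_t(w)-\langle x^\ast(t),w\rangle$, check it is admissible, use positive homogeneity to collapse the pointwise condition to $m_\rho(t)\in\{0,-\infty\}$, and rule out a positive-measure bad set by applying the selection Theorem \ref{theorem_selection_2} with $\alpha\equiv-1$ and renormalizing the selection into $L^p(T,X)$. In effect you re-prove exactly the special case of Corollary \ref{cor1} that the paper consumes, so both arguments ultimately rest on the same selection theorem; what your version buys is self-containedness and transparency --- it makes explicit both the nonseparability obstruction (no countable dense set of test directions) and the admissibility verification for $\rho$ via the singleton Lusin family, a hypothesis-check that the paper leaves implicit when it applies Corollary \ref{cor1} to $g$ --- at the cost of duplicating machinery the paper already has, and of losing the slightly stronger $\epsilon$-subdifferential information that Corollary \ref{cor1} provides for free.
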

\begin{proof} Since $(t,v)\mapsto df_t(x(t))(v)$ is a Lusin integrand, by Lemma \ref{lemma_lusin},  the map $t\mapsto df_t(x(t))(v(t))$ is measurable. For $y\in X$, $v\in L^p(T,X)$ and $s>0$, we define   $\varphi_{y,v,s}(t) := s^{-1}\left[ f_t(y+sv(t))-f_t(y) \right]$. Since $f_t$ is $k(t)$-Lipschitz, we obtain that $|\varphi_{y,v,s}(t)|\leq k(t)\|v(t)\|$ for all $t\in T$. Moreover, by Hölder Inequality, $k(\cdot)\|v(\cdot)\|\in L^1(T)$. Therefore, the map $t\mapsto df_t(x(t))(v(t))$ belongs to $L^1(T)$. Next, by virtue of Fatou's Lemma (see \cite[Theorem 2.8.3]{MR2267655}), for any sequence $y_n \to x$  and $s_n \to 0^+$ we have that 
\begin{eqnarray}
    \nonumber \int_T df_t(x(t))(v(t))\dmu &\geq& \int_T\limsup_{n\to\infty} \frac{f_t(y_n(t)+s_nv(t))-f_t(y_n(t))}{s_n}\dmu,
\end{eqnarray}
which implies the first inequality. To prove the inclusion, let  $x^\ast\in\overline{\partial}\mathcal{I}_f(x)$.  Then,  $x^\ast \in \partial  \mathcal{I}_g(0)$, where $g_t(u) := df_t(x(t))(u)$. Finally, by Corollary \ref{cor1}, we get that
\begin{eqnarray}
    \nonumber  x^\ast\in \partial\mathcal{I}_g(0)&\iff& x^\ast(t)\in \partial g_t(0) \ \mu\text{-a.e. } \iff  x^\ast(t)\in \overline{\partial} f_t(x(t))  \mu\text{-a.e.,} 
\end{eqnarray}
which ends the proof.
\end{proof}
 
The following result is the $L^\infty(T,X)$ counterpart of the above theorem. Its proof follows similar arguments, so it is omitted.
\begin{theorem}\label{theo_clarke2} 
 Let $X$ be  an Asplund space and   $ x\in L^\infty(T,X)$. Let $f\colon T\times X\to \R$ be a Lusin integrand for which there are $k\in L^1(T;\mathbb{R}_+)$ and $\epsilon>0$ such that 
 \begin{align*}
     | f_t(u) -f_t(v)| \leq k(t)\| u- v\|, \textrm{ for all } u,v \in \mathbb{B}_\epsilon(x(t)), \textrm{ for all } t\in T.
 \end{align*}
 Assume, in addition, that  $(t,v)\mapsto df_t(x(t))(v)$ is a Lusin integrand. Then,   
    \begin{equation*}
    d\Intf{f}(x)(v) \leq \int_T  df_t(x(t))(v(t))d\mu \textrm{ for all }  v\in L^\infty(T,X).
    \end{equation*} Moreover, the following inclusion holds 
    \begin{eqnarray*}
        \overline{\partial}\mathcal{I}_f(x)\subset  \{ x^\ast\in L^1(T,X^\ast)\colon x^\ast(t)\in \overline{\partial} f_t(x(t)) \ \mu\text{-a.e.} \}.
    \end{eqnarray*}
\end{theorem}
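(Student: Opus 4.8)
The plan is to mirror the three-step scheme used in the proof of Theorem \ref{theo_clarke}, replacing the H\"older estimate by the elementary $L^\infty$ bound and being careful that the Lipschitz estimate for $f_t$ is now only available on $\mathbb{B}_\epsilon(x(t))$. First, since $(t,v)\mapsto df_t(x(t))(v)$ is assumed to be a Lusin integrand, Lemma \ref{lemma_lusin} guarantees that $t\mapsto df_t(x(t))(v(t))$ is $\mathcal{A}$-measurable for every $v\in L^\infty(T,X)$, so the right-hand side $\int_T df_t(x(t))(v(t))\dmu$ is meaningful once integrability is verified.

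For the directional-derivative inequality, fix $v\in L^\infty(T,X)$ and set $\varphi_{y,v,s}(t):=s^{-1}\left[f_t(y(t)+sv(t))-f_t(y(t))\right]$ for $y\in L^\infty(T,X)$ and $s>0$. The essential difference with the $L^p$ case is that $f_t$ is Lipschitz only on $\mathbb{B}_\epsilon(x(t))$, but convergence in the $L^\infty$ norm supplies the uniform pointwise control needed to remain inside these balls: if $\|y-x\|_\infty<\epsilon/2$ and $s\|v\|_\infty<\epsilon/2$, then both $y(t)$ and $y(t)+sv(t)$ lie in $\mathbb{B}_\epsilon(x(t))$ for \emph{every} $t\in T$, whence $|\varphi_{y,v,s}(t)|\le k(t)\|v(t)\|\le \|v\|_\infty\,k(t)$. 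Since $k\in L^1(T)$, the difference quotients are dominated by the fixed integrable function $\|v\|_\infty k(\cdot)$, which in particular shows $t\mapsto df_t(x(t))(v(t))\in L^1(T)$. Choosing arbitrary sequences $y_n\to x$ in $L^\infty(T,X)$ and $s_n\to 0^+$ and applying the reverse Fatou lemma to $\varphi_{y_n,v,s_n}$ yields
\begin{align*}
\int_T df_t(x(t))(v(t))\dmu \ge \limsup_{n\to\infty}\int_T \frac{f_t(y_n(t)+s_nv(t))-f_t(y_n(t))}{s_n}\dmu ;
\end{align*}
taking the supremum over all such sequences gives $d\Intf{f}(x)(v)\le \int_T df_t(x(t))(v(t))\dmu$.

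For the inclusion, let $x^\ast\in\overline{\partial}\Intf{f}(x)$ and put $g_t(u):=df_t(x(t))(u)$. Each $g_t$ is sublinear, finite and $k(t)$-Lipschitz, hence $g_t\in\Gamma_0(X)$ and $\partial g_t(0)=\overline{\partial}f_t(x(t))$; since $g$ is a Lusin integrand with convex lower semicontinuous sections it is (trivially) an admissible integrand. Moreover $\Intf{g}$ is finite and globally $\|k\|_1$-Lipschitz on $L^\infty(T,X)$, so $\dom\Intf{g}=L^\infty(T,X)$. By the first part, $\langle x^\ast,v\rangle\le d\Intf{f}(x)(v)\le \Intf{g}(v)$ for all $v$, which (using $\Intf{g}(0)=0$ and the sublinearity of $\Intf{g}$) means $x^\ast\in\partial\Intf{g}(0)$. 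Applying the subdifferential formula for $\Intf{g}$ over $L^\infty(T,X)$ (the preceding Theorem on the conjugate of an integral functional over $L^\infty$), and noting that $\dom\Intf{g}=L^\infty(T,X)$ forces $\sigma_{\dom\Intf{g}}=\delta_{\{0\}}$ and hence the vanishing of the singular component, we conclude $x^\ast\in L^1(T,X^\ast)$ with $x^\ast(t)\in\partial g_t(0)=\overline{\partial}f_t(x(t))$ for $\mu$-a.e. $t\in T$, which is the asserted inclusion.

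The main obstacle will be the bookkeeping around the \emph{purely local} Lipschitz hypothesis: one must verify that, for $y$ close to $x$ in the $L^\infty$ norm and $s$ small, the points $y(t)$ and $y(t)+sv(t)$ genuinely remain in $\mathbb{B}_\epsilon(x(t))$ \emph{simultaneously for all} $t$, so that the uniform domination by $\|v\|_\infty k(\cdot)$ is legitimate and reverse Fatou applies. This uniform pointwise control is precisely what the $L^\infty$ topology provides and what fails in $L^p$, which is why the global Lipschitz assumption of Theorem \ref{theo_clarke} can here be relaxed to a local one. A secondary point requiring care is the suppression of the singular component of a Clarke subgradient, which is justified by the finiteness and continuity of $\Intf{g}$ everywhere on $L^\infty(T,X)$.
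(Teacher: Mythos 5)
Your proposal is correct and is essentially the argument the paper intends: the paper omits this proof, saying only that it ``follows similar arguments'' to Theorem \ref{theo_clarke}, and you carry out exactly that adaptation, correctly handling the two points that genuinely change in the $L^\infty$ setting. Specifically, the $L^\infty$ norm forces $y(t)$ and $y(t)+sv(t)$ into $\mathbb{B}_\epsilon(x(t))$ simultaneously for (almost) all $t$ once $\|y-x\|_\infty<\epsilon/2$ and $s\|v\|_\infty<\epsilon/2$, so the difference quotients are dominated by $\|v\|_\infty\,k(\cdot)\in L^1(T)$ despite the merely local Lipschitz hypothesis (and $L^\infty$-convergence even gives the pointwise a.e.\ convergence $y_n(t)\to x(t)$ needed for the reverse Fatou comparison), while the singular component of $x^\ast\in\overline{\partial}\Intf{f}(x)\subset\partial\Intf{g}(0)$, with $g_t=df_t(x(t))(\cdot)$ an admissible integrand, is eliminated via formula \eqref{form_conj_L_inf} because $|\Intf{g}(v)|\le\|k\|_1\|v\|_\infty$ yields $\dom\Intf{g}=L^\infty(T,X)$ and hence $N_{\dom\Intf{g}}(0)=\{0\}$.
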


To apply Theorem \ref{theo_clarke}, we have to check that $(t,v)\mapsto df_t(x(t))(v)$ is a Lusin integrand.  The next proposition provides a sufficient condition ensuring this condition. We recall that for all $t\in T$, the map $(x,v)\mapsto df_t(x)(v)$ is usc (see, e.g., \cite[Chapter 2, Proposition 1.1]{MR1488695})).

\begin{proposition}
    Let $f\colon T\times X\to \R$ be a normal integrand such that $f_t$ is $k(t)$-Lipschitz, where $k$ is as in Theorem \ref{theo_clarke}. If the map $(t,x,v)\mapsto df_t(x)(v)$ is usc, then for all strongly measurable functions $x\colon T\to X$, the map $(t,v)\mapsto df_t(x(t))(v)$ is a Lusin integrand.
\end{proposition}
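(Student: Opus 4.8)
The plan is to verify the Lusin integrand property directly from the definition, combining Lusin's theorem for the strongly measurable function $x$ with the assumed joint upper semicontinuity of $(t,x,v)\mapsto df_t(x)(v)$. Write $g_t(v):=df_t(x(t))(v)$; since $f_t$ is $k(t)$-Lipschitz, one has $|g_t(v)|\leq k(t)\|v\|$, so $g$ is real-valued and the epigraph/strict-epigraph reduction (Claim~1 in the proof of Proposition \ref{lemma_lusin_int}) is available.

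First, I would fix $\varepsilon>0$ and $A\in\mathcal{A}$ of finite measure. Applying Proposition \ref{lusin} to the strongly measurable map $x$ yields a compact set $B\subset A$ with $\mu(A\setminus B)<\varepsilon$ such that $x\colon B\to X$ is continuous. The goal then reduces to showing that $B\ni t\tto \epi g_t$ is lsc.

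Next, I would observe that on $B\times X$ the map $(t,v)\mapsto g_t(v)=df_t(x(t))(v)$ is usc, being the composition of the continuous map $(t,v)\mapsto(t,x(t),v)$ (continuity coming from the continuity of $x$ on $B$) with the jointly usc map $(t,x,v)\mapsto df_t(x)(v)$. By the equivalence recorded in the proof of Proposition \ref{lemma_lusin_int}, it suffices to prove that the strict-epigraph multifunction $B\ni t\tto\epi_s g_t=\{(v,\alpha):g_t(v)<\alpha\}$ is lsc. So I would fix $\hat t\in B$ and an open set $U\times I\subset X\times\R$ with $\epi_s g_{\hat t}\cap(U\times I)\neq\emptyset$, i.e. there is $(\hat v,\hat\alpha)\in U\times I$ with $g_{\hat t}(\hat v)<\hat\alpha$. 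Using the upper semicontinuity of $g$ at $(\hat t,\hat v)$ together with $\hat\alpha>g_{\hat t}(\hat v)$, I would produce a neighborhood $W$ of $\hat t$ in $B$ and a neighborhood $U_1\subset U$ of $\hat v$ such that $g_t(v)<\hat\alpha$ for all $(t,v)\in W\times U_1$; keeping $v=\hat v$ fixed then gives $(\hat v,\hat\alpha)\in\epi_s g_t\cap(U\times I)$ for every $t\in W$. Hence $\{t\in B:\epi_s g_t\cap(U\times I)\neq\emptyset\}$ is open, which proves the lsc of $t\tto\epi_s g_t$ and therefore, via the same equivalence, of $t\tto\epi g_t$.

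The main obstacle, and essentially the only nontrivial point, is the passage from joint upper semicontinuity of the function $g$ to lower semicontinuity of its epigraph multifunction; this is exactly where the reduction to the strict epigraph is essential, since it allows me to witness the intersection with $U\times I$ by the single fixed point $(\hat v,\hat\alpha)$ rather than tracking the whole level structure. Everything else is routine: the composition argument for usc and the use of Proposition \ref{lusin} to render $x$ continuous on a large compact set. Combining these, $B\ni t\tto\epi g_t$ is lsc with $\mu(A\setminus B)<\varepsilon$, which is precisely the Lusin integrand property for $(t,v)\mapsto df_t(x(t))(v)$.
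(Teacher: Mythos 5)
Your proof is correct and takes essentially the same route as the paper: apply Proposition \ref{lusin} to render $x$ continuous on a compact $B\subset A$ with $\mu(A\setminus B)<\varepsilon$, then compose the continuous map $(t,v)\mapsto (t,x(t),v)$ with the jointly usc map $(t,x,v)\mapsto df_t(x)(v)$ and deduce that $B\ni t\tto \epi df_t(x(t))(\cdot)$ is lsc by propagating a witness $(\hat v,\hat\alpha)$ to nearby $t$. Your detour through the strict epigraph is only a cosmetic variant --- and in fact a slightly more careful one, since the paper fixes $(\hat v,\hat\alpha)$ with $df_{\hat t}(x(\hat t))(\hat v)\leq\hat\alpha$ and asserts $df_t(x)(v)\leq\hat\alpha$ on a neighborhood, a step that, when equality holds, requires exactly the strict-inequality adjustment (picking $\beta\in I$ with $\beta>\hat\alpha$, as in Proposition \ref{lemma_lusin_int}) that your argument builds in from the start.
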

\begin{proof}
    Let $\epsilon >0$ and $A\subset T$ of finite measure. By Proposition \ref{lusin}, we can find a compact set $B\subset A$ with $\mu(A\setminus B)<\epsilon$ and such that $x\colon B\to X$ is continuous. Let $U\times  I$ an open set of $X\times \R$.  Then, since $(t,x,v)\mapsto df_t(x)(v)$ is usc, if $\hat{t}\in B$ is such that there exists $(\hat{v},\hat{\alpha})\in U\times I$ with $df_{\hat{t}}(x(\hat{t}))(\hat{v})\leq\hat{\alpha}$, we can find a neighborhood $V\times U_1\times U_2$ of $(\hat{t},x(\hat{t}),\hat{v})$ such that $df_{t}(x)(v)\leq\hat{\alpha}$ for all $(t,x,v)\in V\times U_1\times U_2$. Finally, if $t\in V\cap x^{-1}(U_1)$, which is a neighborhood of $\hat{t}$, then $df_t(x(t))(\hat{v})\leq \hat{\alpha}$. Hence, the set $\{ t\in B\colon (U\times I)\cap \epi df_{t}(x(t))(\cdot)\neq\emptyset \}$ is open, which ends the proof.
\end{proof}

\begin{remark} Under the  assumptions of  Theorems \ref{theo_clarke} and \ref{theo_clarke2}, except that, in this case, the map $(t,v)\mapsto df_t(x)(v)$ is a Lusin integrand for   $x\in X$, we can obtain  the following formulas for expected functionals:
\begin{equation*}
    \begin{aligned}
    dE_f(x)(v)&\leq \int_T df_t(x)(v)\dmu \textrm{ for all }  v\in X,\\
    \overline{\partial}E_f(x)&\subset \left\{\int_T x^\ast(t)\dmu\colon x^\ast\in L^1(T,X^\ast)\text{ and } x^\ast(t)\in\overline{\partial} f_t(x) \ \mu\text{-a.e.} \right\}  
    \end{aligned}
\end{equation*}
We refer to \cite{MR3783778} for similar formulas for the Clarke and limiting/Mordukhovich subdifferential of expected functionals, where the authors use \emph{weakly measurable selections} and the \emph{Gelfand integral}.   
\end{remark}

\section{Application to Calculus of Variations }\label{sect-8}

In this section, we provide optimality conditions for a Calculus of Variations problem with Lipschitz data in an Asplund space. 
Our approach is based on the Clarke penalization approach (see \cite{MR1488695}) and the estimations on the Clarke subdifferential of integral functions developed in Section \ref{sect-7} (c.f. Theorem \ref{theo_clarke}). 
The obtained result illustrates the applicability of the developed theory in previous sections.

Let $X$ be an Asplund space, and consider the problem: 
\begin{equation}\label{Problem_CV} 
  \min  J(x):=\ell(x(a),x(b)) +\int_a^b f(t,x(t), \dot{x}(t)) dt,
\end{equation}
over all admissible arcs $x\in \ACp$ satisfying $(x(a),x(b)) \in S$.  Here $S\subset X\times X$ is a nonempty and closed set and $p\in (1,+\infty)$. Moreover, we consider the following assumption:  \begin{itemize}
    \item[$(\H_{0}):$] The function $\ell\colon X\times X\to \R$ is locally Lipschitz,  $f\colon [a,b]\times X\times X \to \R$ is a normal integrand, and the maps $(x,v)\mapsto f(t,x,v)$ are $k(t)$-Lipschitz for a.e. $t\in [a,b]$ with $k\in L^q([a,b])$ and $1/p+1/q=1$.
\end{itemize}

We say that $x^{\ast}\colon [a,b]\to X$ is a \emph{local solution} of \eqref{Problem_CV} if there exists $\eps>0$ such that $J(x^{\ast})\leq J(x)$ for all admissible arcs with $\Vert x-x^{\ast}\Vert_{\ACp}\leq \eps$ and $(x^\ast(a),x^\ast(b))\in S$.

The main result of this section is the following.
\begin{theorem}\label{thm_clarke}
Let $X$ be an Asplund space and assume that $(\H_0)$ holds.  Let $x$ be a local solution of \eqref{Problem_CV}. Then, there exists $p\in \ACq$  such that 
 \begin{eqnarray}
        (\dot{p}(t),p(t)) \in  &\overline{\partial} f(t,x(t),\dot{x}(t)) \textrm{ a.e } t\in [a,b], \label{eq_euler_1} \\ (p(a),-p(b)) \in &\partial \ell(x(a),x(b)) + N_{S}(x(a),x(b)). \label{eq_euler_2}
    \end{eqnarray}
\end{theorem}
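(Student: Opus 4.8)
The plan is to use Clarke's exact penalization together with the Clarke subdifferential estimate of Theorem~\ref{theo_clarke}.

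I would first encode an arc by its initial value and its derivative. The maps $E\colon\ACp\to X\times X$, $E(x)=(x(a),x(b))$, and $L\colon\ACp\to L^p([a,b],X\times X)$, $L(x)(t)=(x(t),\dot x(t))$, are bounded and linear, and $E$ is surjective (any pair of endpoints is attained by an affine arc). Setting $\tilde f(t,(u,v)):=f(t,u,v)$, which is a normal, $k(t)$-Lipschitz integrand on the Asplund space $X\times X$, the cost reads $J(x)=\ell(E(x))+\mathcal{I}_{\tilde f}(L(x))$. Using $(\H_0)$ and Hölder's inequality (the state part of the integrand is controlled in sup-norm by $|x(a)|+\|\dot x\|_{1}$ and the velocity part by $\|k\|_{q}\|\dot x\|_{p}$) one checks that $\mathcal{I}_{\tilde f}\circ L$ is Lipschitz on $\ACp$; since $\ell$ is locally Lipschitz and $E$ is bounded, $J$ is locally Lipschitz near the local solution $x$.

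Because $E$ is a surjective bounded linear operator, the open mapping theorem gives metric regularity, $d_{E^{-1}(S)}(\cdot)\le\kappa\,d_S(E(\cdot))$ near $x$, so exact penalization applies: for $K$ large, $x$ is an unconstrained local minimizer of $J_K:=J+K\,(d_S\circ E)$ and thus $0\in\overline\partial J_K(x)$. Applying in turn the Clarke sum rule, the chain rule through the continuous linear maps $E$ and $L$, the inclusion $K\,\overline\partial d_S(E(x))\subset N_S(E(x))$, and Theorem~\ref{theo_clarke} for $\mathcal{I}_{\tilde f}$ (whose hypothesis that $(t,w)\mapsto d\tilde f_t((x(t),\dot x(t)))(w)$ be a Lusin integrand is verified through upper semicontinuity of the Clarke directional derivative, as in the proposition following Theorem~\ref{theo_clarke2}), I obtain a pair $(\rho_a,\rho_b)\in\overline\partial\ell(x(a),x(b))+N_S(x(a),x(b))$ and a selection $(\alpha,\beta)\in L^q([a,b],X^\ast\times X^\ast)$ with $(\alpha(t),\beta(t))\in\overline\partial f(t,x(t),\dot x(t))$ a.e., satisfying $E^\ast(\rho_a,\rho_b)+L^\ast(\alpha,\beta)=0$ in $(\ACp)^\ast$.

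The delicate step --- the one I expect to be the main obstacle --- is to extract the absolutely continuous adjoint arc from this abstract identity by integration by parts. Using $y(t)=y(a)+\int_a^t\dot y(s)\,ds$ and Fubini, $E^\ast(\rho_a,\rho_b)$ pairs with $y$ as $\langle\rho_a+\rho_b,y(a)\rangle+\int_a^b\langle\rho_b,\dot y(s)\rangle\,ds$, whereas $L^\ast(\alpha,\beta)$ pairs as $\langle\int_a^b\alpha(t)\,dt,y(a)\rangle+\int_a^b\langle\int_s^b\alpha(t)\,dt+\beta(s),\dot y(s)\rangle\,ds$. Since $y\mapsto(y(a),\dot y)$ identifies $(\ACp)^\ast$ with $X^\ast\times L^q([a,b],X^\ast)$, the identity $E^\ast(\rho_a,\rho_b)+L^\ast(\alpha,\beta)=0$ splits into $\rho_a+\rho_b+\int_a^b\alpha(t)\,dt=0$ and $\beta(s)+\rho_b+\int_s^b\alpha(t)\,dt=0$ for a.e.\ $s$. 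Defining $p(s):=\beta(s)=-\rho_b-\int_s^b\alpha(t)\,dt$, the second relation shows $p\in\ACq$ with $\dot p=\alpha$, so $(\dot p(t),p(t))=(\alpha(t),\beta(t))\in\overline\partial f(t,x(t),\dot x(t))$ a.e., which is \eqref{eq_euler_1}; evaluating at $s=b$ gives $\rho_b=-p(b)$, and the first relation then gives $\rho_a=p(a)$, so $(p(a),-p(b))=(\rho_a,\rho_b)\in\overline\partial\ell(x(a),x(b))+N_S(x(a),x(b))$, which yields \eqref{eq_euler_2}. Note that the penalization naturally produces the Clarke subdifferential $\overline\partial\ell$ and the Clarke normal cone, consistently with \eqref{eq_euler_1}; if the limiting objects are intended in \eqref{eq_euler_2} they follow only under additional regularity.
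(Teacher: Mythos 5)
Your overall architecture --- exact penalization, Clarke calculus, Theorem \ref{theo_clarke} for the integral term, then an integration-by-parts identification of the adjoint arc --- is the same as the paper's, but your realization differs in two places. For the penalization, the paper proves Proposition \ref{Prop_penalizacion} by hand (constructing corrected arcs $y_n$ with endpoints projected onto $S$, yielding $K_0=M+\Vert k\Vert_1$), whereas you invoke metric regularity of the surjective endpoint map $E$ via the open mapping theorem plus Clarke's penalization principle; both are valid and yield the same reduction. For the subdifferential step, the paper lifts the problem to $\mathcal{X}=X\times L^p\times L^p$, encodes the coupling $x(t)=u+\int_a^t y(s)\,ds$ in the closed convex set $A$, and uses the Dubois--Reymond Lemma \ref{lema_dub} to describe $N_A$; you instead work directly on $\ACp\cong X\times L^q$-dual pairing via $y\mapsto (y(a),\dot y)$ and compute $E^\ast$ and $L^\ast$ explicitly by Fubini. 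Your Fubini computation is exactly the content of Lemma \ref{lema_dub}, and your extraction of $p(s)=-\rho_b-\int_s^b\alpha(t)\,dt$ with $\dot p=\alpha$ is correct; your route avoids the auxiliary variable at the cost of computing adjoints by hand.

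There is, however, one genuine shortfall, which you yourself flag in your last sentence: the theorem's condition \eqref{eq_euler_2} is stated with the \emph{limiting} objects $\partial\ell$ and $N_S$ (this is the paper's notation), and your all-Clarke calculus only delivers $\overline{\partial}\ell(x(a),x(b))+N^{C}_S(x(a),x(b))$, which is in general strictly larger. The paper secures the limiting version by a deliberate choice of decomposition: it applies the Mordukhovich sum and chain rules from \cite[Chapter~6]{MR2986672}, where the only linear map composed through is $\Phi(u,x,y)=(u,u+\int_a^b y(s)ds)$, which \emph{is} surjective, and the nonsurjective coupling between $x$ and $\dot x$ is absorbed into the convex set $A$, whose normal cone is unambiguous; only the integral term is then relaxed from $\partial \Intf{f}$ to $\overline{\partial}\Intf{f}$ so that Theorem \ref{theo_clarke} applies. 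Your argument cannot be upgraded to limiting subdifferentials as written, because your chain rule passes through the map $L(x)=(x,\dot x)$, which is \emph{not} surjective onto $L^p([a,b],X\times X)$, and the limiting chain rule through nonsurjective linear maps is not available without extra hypotheses --- this is precisely the obstruction the paper's set $A$ is designed to circumvent. A minor remark in your favor: you at least address the Lusin-integrand hypothesis of Theorem \ref{theo_clarke} for $(t,w)\mapsto df_t((x(t),\dot x(t)))(w)$ (though the joint upper semicontinuity you lean on is an assumption beyond $(\H_0)$), while the paper's proof invokes Theorem \ref{theo_clarke} without verifying that hypothesis at all.
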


To show the above theorem, we first prove that problem \eqref{Problem_CV} is equivalent to an unconstrained one.

\begin{proposition}\label{Prop_penalizacion}
       An arc $x\colon [a,b]\to X$ is a local solution of \eqref{Problem_CV} if and only if $(x(a),x(b))\in S$ and there exists $K_0>0$ such that for $K>K_0$ $x$ is a local solution of the unconstrained problem:
        \begin{align}\label{Problem_CVP}
\hspace{-1mm}\min_{x\in \ACp} J_{K}(x):= \ell(x(a),x(b)) +\int_a^b f(t,x(t), \dot{x}(t)) dt + K d_{S}(x(a),x(b)).
\end{align}
\end{proposition}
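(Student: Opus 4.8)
The plan is to establish the equivalence via the exact (Clarke) penalization principle, following the approach of \cite{MR1488695}. The two analytic ingredients are that the cost $J$ is locally Lipschitz on $\ACp$ for the norm $\Vert\cdot\Vert_{\ACp}$, and that the boundary values $(x(a),x(b))$ of an arc can be corrected at a cost in the $\ACp$-norm that is linear in the correction of the endpoints. I would first dispose of the easy implication: assume $(x(a),x(b))\in S$ and that $x$ is a local solution of \eqref{Problem_CVP} for all $K>K_0$. Since $d_S(x(a),x(b))=0$ we have $J_K(x)=J(x)$, and for any admissible arc $y$ near $x$ with $(y(a),y(b))\in S$ we likewise have $J_K(y)=J(y)$; local optimality of $x$ for $J_K$ then yields $J(x)=J_K(x)\le J_K(y)=J(y)$, so $x$ solves \eqref{Problem_CV} locally.

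For the converse the first step is to verify, using $(\H_0)$, that $J$ is locally Lipschitz near $x$. The endpoint map $y\mapsto(y(a),y(b))$ is continuous and linear from $\ACp$ into $X\times X$ (from $y(t)=y(a)+\int_a^t\dot y$ and Hölder's inequality), so the term $\ell(y(a),y(b))$ is locally Lipschitz because $\ell$ is; meanwhile the $k(t)$-Lipschitz continuity of $f_t$ together with Hölder's inequality ($k\in L^q$, $\dot y\in L^p$) bounds $\int_a^b|f(t,y(t),\dot y(t))-f(t,z(t),\dot z(t))|\,dt$ by a constant times $\Vert y-z\Vert_{\ACp}$. Let $L$ denote a Lipschitz constant of $J$ on a ball $\mathbb{B}_\epsilon(x)$ in $\ACp$.

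Next I would construct the comparison arc. Given $y$ near $x$, set $(\alpha,\beta):=(y(a),y(b))$ and $d:=d_S(\alpha,\beta)$, and for $\delta>0$ pick $(\alpha',\beta')\in S$ with $\Vert(\alpha',\beta')-(\alpha,\beta)\Vert\le d+\delta$ (an approximate projection, since $S$ is merely closed). Define $z(t):=y(t)+\tfrac{b-t}{b-a}(\alpha'-\alpha)+\tfrac{t-a}{b-a}(\beta'-\beta)$, so that $(z(a),z(b))=(\alpha',\beta')\in S$; estimating $z(a)-y(a)$ and $\dot z-\dot y$ (the derivative is perturbed by an element of $X$ constant in $t$) gives $\Vert z-y\Vert_{\ACp}\le C_0\,(d+\delta)$ with $C_0$ depending only on $b-a$. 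Since $(x(a),x(b))\in S$ forces $d\le\Vert(\alpha,\beta)-(x(a),x(b))\Vert\le C_1\Vert y-x\Vert_{\ACp}$, shrinking the neighbourhood ensures both $y$ and $z$ lie in $\mathbb{B}_\epsilon(x)$ for $y$ close enough to $x$.

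Finally I would combine these estimates. By optimality of $x$ for \eqref{Problem_CV} (using $(z(a),z(b))\in S$) and the Lipschitz bound, $J(x)\le J(z)\le J(y)+L\Vert z-y\Vert_{\ACp}\le J(y)+LC_0(d+\delta)$; letting $\delta\downarrow0$ gives $J(y)\ge J(x)-LC_0\,d$. Hence $J_K(y)=J(y)+K\,d\ge J(x)+(K-LC_0)\,d$, and taking $K_0:=LC_0$ we obtain $J_K(y)\ge J(x)=J_K(x)$ for every $K>K_0$ and every $y$ in the ball, i.e.\ $x$ is a local solution of \eqref{Problem_CVP}. The main obstacle is this penalization estimate itself: establishing the $\ACp$-Lipschitz continuity of $J$ under $(\H_0)$ and realizing the boundary correction $z$ with an $\ACp$-cost linear in $d$, while coping with the fact that the metric projection onto the merely closed set $S$ need not exist in a general Asplund space, which is precisely what forces the approximate-projection device and the passage $\delta\downarrow0$.
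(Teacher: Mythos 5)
Your proof is correct and takes essentially the same route as the paper's: both rely on the local Lipschitz modulus of $\ell$ and the $k(t)$-Lipschitz bound on the integrand (with $k\in L^q$ and H\"older's inequality), an approximate projection onto the merely closed set $S$, and the identical linear endpoint-correction arc $z(t)=y(t)+\tfrac{b-t}{b-a}(\alpha'-\alpha)+\tfrac{t-a}{b-a}(\beta'-\beta)$, producing a penalty threshold $K_0$ built from these Lipschitz constants (the paper's $K_0=M+\Vert k\Vert_1$). The only difference is presentational: the paper runs the same estimates through a contradiction along a sequence $x_n\to x$ with a deficit-dependent tolerance $\varepsilon_n$, whereas you give the direct quantitative bound $J_K(y)\geq J_K(x)+(K-K_0)\,d_S(y(a),y(b))$ on a ball.
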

\begin{proof}
Let $x$ be a local solution of \eqref{Problem_CV}. Since $\ell$ is locally Lipschitz, there exist $\delta>0$ and $M>0$ such that for all $(w,z), (u,v)\in \mathbb{B}_{\delta}((x(a),x(b))$
$$
\vert \ell(z,w)-\ell(u,v)\vert \leq M(\Vert z-u\Vert+\Vert w-v\Vert).
$$
Set $K_0:=M+\Vert k\Vert_1$. Proceeding by contradiction, it is possible to find $K>K_0$ and a sequence $(x_n)\subset \ACp$ such that $J_{K}(x_n)<J_{K}(x)=J(x)$ for all $n\in \mathbb{N}$, and $x_n\to x$ in $\ACp$.  For each $n\in \mathbb{N}$, let us consider $(\mu_n,\sigma_n)\in S$ such that
$$
\Vert \mu_n-x_n(a)\Vert +\Vert \sigma_n-x_n(b)\Vert \leq d_S(x_n(a),x_n(b))+\eps_n,
$$
where $\eps_n:=\min\{(2K)^{-1}(J(x)-J_K(x_n)),1/n\}$. Then, for $n\in \mathbb{N}$, we define
$$
y_n(t):=x_n(t)+\frac{t-a}{b-a}(\sigma_n-x_n(b))+\frac{b-t}{b-a}(\mu_n-x_n(a)).
$$
It is clear that $y_n\in \ACp$, $y_n(a)=\mu_n$, $y_n(b)=\sigma_n$, $(y_n(a),y_n(b))\in S$, and 
$$
\Vert y_n-x_n\Vert_{\infty}+\Vert \dot{y}_n-\dot{x}_n\Vert_{\infty}\leq \Vert \sigma_n-x_n(b)\Vert+\Vert \mu_n-x_n(a)\Vert.
$$
Moreover, let $N\in \mathbb{N}$ such that $\Vert y_n-x\Vert_{\infty}\leq \min\{\eps,\delta\}$ for all $n\in \mathbb{N}$. Hence, 
\begin{equation*}
    \begin{aligned}
    J(y_n)-J(x_n)&= \ell(y_{n}(a),y_{n}(b)) - \ell(x_n(a),x_n(b))\\ 
 &+\int_a^b [f(t,y_{n}(t),\dot{y}_{n}(t))-f(t,x_n(t),\dot{x}_n(t))]dt\\
&  \leq  M(\|\sigma_n-x_n(b)\| + \|\mu_n-x_n(a)\|)\\ 
  &+ \int_a^b k(t)(\|y_{n}(t)-x_n(t)\| + \|\dot{y}_{n}(t)-\dot{x}_n(t)\|)dt\\
  & \leq M(\|\sigma_n-x_n(b)\| + \|\mu_n-x_n(a)\|) \\
  & + (\|\sigma_n-x_n(b)\|+\|\mu_n-x_n(a)\|)\int_a^b k(t)dt\\
&  \leq (\|\sigma_n-x_n(b)\|+\|\mu_n-x_n(a)\|)\left(M+\|k\|_1\right),
    \end{aligned}
\end{equation*}
which shows that 
$$
J(y_n)\leq J(x_n)+K_0(\Vert \mu_n-x_n(a)\Vert+\Vert \sigma_n-x_n(b)\Vert) \textrm{ for all } n\geq N.
$$
Moreover, we observe that $(y_n)$ is feasible for \eqref{Problem_CV}. Hence, 
$$
J(x)\leq J(x_n)+K_0(\Vert \mu_n-x_n(a)\Vert+\Vert \sigma_n-x_n(b)\Vert) \textrm{ for all } n\geq N.
$$
Therefore, 
$$
J(x)\leq J(x_n)+K_0(d_S(x_n(a),x_n(b))+\eps_n)\leq J_K(x_n)+\frac{1}{2}(J(x)-J_k(x_n))<J(x),
$$
which is a contradiction.  Finally, the converse implication is direct.

\end{proof}

Now, let us consider the closed and convex set 
$$A:= \left\{ (u,x,y)\in  \mathcal{X}\colon  x(t) = u+\int_a^ty(s)ds \textrm{ for all } t\in [a,b]\right\},$$
where $\mathcal{X}=X\times L^p([a,b],X)\times L^p([a,b],X)$.  It is clear that if $x$ is a local solution of \eqref{Problem_CVP}, then $(x(a),x,\dot{x})$ is a local solution of the following problem: 
\begin{equation}\label{Problem_CV2}
  \min_{(y,x,y)\in A}  J_K(x)= \ell(x(a),x(b)) +\int_a^b f(t,x(t),y(t))dt + K\cdot d_{S}(x(a),x(b)).
\end{equation}
The next result, known as Dubois-Reymond Lemma, can be proved in a similar way to \cite[Proposition~3.5.21]{MR1488695}.
\begin{lemma}\label{lema_dub} Let $(u,x,y)\in A$. If $(u^\ast,x^\ast,y^\ast)\in N_A(u,x,y)$, then $$y^\ast(t)=-\int_t^b x^\ast(s)ds \text{ for all } t\in [a,b] \textrm{ and } y^\ast(a) = u^\ast = - \int_a^b x^\ast(s)ds.
$$
\end{lemma}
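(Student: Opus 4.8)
The plan is to exploit the fact that $A$ is not merely closed and convex but a closed linear subspace of $\mathcal{X}$. Since $A$ is convex, the normal cone $N_A$ coincides with the normal cone of convex analysis (as recalled in the preliminaries, where $N_C(x)=N^0_C(x)$ for convex $C$), and for a linear subspace the latter is exactly the annihilator $A^{\perp}$; thus for every $(u,x,y)\in A$ one has $N_A(u,x,y)=A^{\perp}$. First I would record that, since $X$ is Asplund and $p\in(1,\infty)$, the dual of $\mathcal{X}$ is $X^{\ast}\times L^q([a,b],X^{\ast})\times L^q([a,b],X^{\ast})$ with $1/p+1/q=1$, so that $(u^{\ast},x^{\ast},y^{\ast})$ acts on $(w,z,v)\in\mathcal{X}$ by $\langle u^{\ast},w\rangle+\int_a^b\langle x^{\ast}(t),z(t)\rangle\,dt+\int_a^b\langle y^{\ast}(t),v(t)\rangle\,dt$. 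The condition $(u^{\ast},x^{\ast},y^{\ast})\in N_A(u,x,y)$ then reads: this pairing vanishes for every $(w,z,v)\in A$.

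The next step is to parametrize $A$: every element is of the form $(w,z,v)$ with $w\in X$ and $v\in L^p([a,b],X)$ arbitrary and $z(t)=w+\int_a^t v(s)\,ds$. Substituting this into the annihilation identity and grouping the contributions of $w$ and of $v$ separately, I would arrive at
$$\Big\langle u^{\ast}+\int_a^b x^{\ast}(t)\,dt,\,w\Big\rangle+\int_a^b\Big\langle y^{\ast}(s)+\int_s^b x^{\ast}(t)\,dt,\,v(s)\Big\rangle\,ds=0.$$
The only nontrivial manipulation is to rewrite the cross term $\int_a^b\langle x^{\ast}(t),\int_a^t v(s)\,ds\rangle\,dt$ by Fubini's theorem over the triangle $\{a\le s\le t\le b\}$ as $\int_a^b\langle \int_s^b x^{\ast}(t)\,dt,\,v(s)\rangle\,ds$; the interchange is legitimate because $x^{\ast}\in L^q$ and $v\in L^p$, so by Hölder's inequality the double integral is absolutely convergent.

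Finally, I would test this identity on the two independent families of directions. Taking $v=0$ and letting $w$ range over $X$ forces $u^{\ast}=-\int_a^b x^{\ast}(t)\,dt$. Taking $w=0$ and letting $v$ range over $L^p([a,b],X)$, the $L^p$--$L^q$ duality (the fundamental lemma of the calculus of variations) yields $y^{\ast}(s)=-\int_s^b x^{\ast}(t)\,dt$ for a.e.\ $s\in[a,b]$; since the right-hand side is absolutely continuous in $s$, this is precisely the canonical continuous representative of the class $y^{\ast}$, which gives the stated pointwise formula $y^{\ast}(t)=-\int_t^b x^{\ast}(s)\,ds$ for all $t\in[a,b]$. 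Evaluating it at $t=a$ then gives $y^{\ast}(a)=-\int_a^b x^{\ast}(s)\,ds=u^{\ast}$, closing the argument. I expect the Fubini interchange to be the only delicate point, together with the care needed to pass from the a.e.\ identity for the $L^q$-class $y^{\ast}$ to its continuous representative.
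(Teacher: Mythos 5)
Your proof is correct and is essentially the intended argument: the paper itself omits the proof of Lemma \ref{lema_dub}, deferring to the du Bois--Reymond lemma in Clarke et al.\ \cite[Proposition~3.5.21]{MR1488695}, and your route --- $A$ is a closed subspace so the limiting normal cone reduces to the annihilator $A^{\perp}$ (using $N_A=N_A^0$ for convex sets, as recalled in the paper's preliminaries), the identification $\mathcal{X}^\ast=X^\ast\times L^q\times L^q$ valid since $X$ is Asplund, a Fubini interchange justified by H\"older, and the $L^p$--$L^q$ duality to pass from the integral identity to the a.e.\ (hence continuous-representative) formula for $y^\ast$ --- is exactly that standard argument, with the delicate points (Fubini, and identifying $y^\ast$ with its absolutely continuous representative so that $y^\ast(a)=u^\ast$ makes sense) correctly handled.
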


Now, we are in position to prove Theorem \ref{thm_clarke}.  We recall that $L^p([a,b],X)$ is an Asplund space (see, e.g.,  \cite[Corollary 3, p. 100]{MR0453964}).  \\
\noindent \emph{Proof of Theorem \ref{thm_clarke}:} Let us consider: 
$$
\Phi(u,x,y):=(u,u+\int_a^b y(s)ds) \textrm{ and } \mathcal{I}_f(u,x,y):=\int_a^b f(t,x(t),y(t))dt.
$$
Hence, it is clear that problem \eqref{Problem_CV2} can be written as 
    \begin{align}\label{Problem_CV3}
  \min\,  (\ell\circ\Phi)(u,x,y) +\Intf{f}(u,x,y) + (K\cdot d_{S}\circ \Phi)(u,x,y) + \delta_A(u,x,y),
\end{align}
over $(u,x,y)\in \mathcal{X}$.

Let $x$ be a local solution of (\ref{Problem_CV}).  Then, by Proposition \ref{Prop_penalizacion}, it is clear that  $z:=(x(a),x,\dot{x})$ is a local solution of (\ref{Problem_CV3}). Hence, by the optimality condition, the sum and the chain rule (see, e.g.,  \cite[Chapter~6]{MR2986672}), it follows that
\begin{equation*}
\begin{aligned}
(0,0,0) &\in \partial(\ell\circ\Phi +\Intf{f} + K\cdot d_{S}\circ \Phi+\delta_A)(z)\\
&\subset  \partial(\ell\circ\Phi)(z) +\partial\Intf{f}(z) + \partial((K\cdot d_{S})\circ \Phi)(z)+\partial\delta_A(z)\\
&\subset \Phi^\ast \partial\ell(\Phi(z))+\partial\Intf{f}(z)
+\Phi^\ast(K\partial d_S(\Phi(z)))+N_A(z)\\
&\subset \Phi^\ast \partial\ell(\Phi(z)) + \partial \Intf{f}(z) + \Phi^\ast N_{S}(\Phi(z)) + N_A(z),
    \end{aligned}
\end{equation*}
where we have used that $\Phi(z)=(x(a),x(b))\in S$ and  $\Phi^\ast(K\partial d_S(\Phi(z)))\subset \Phi^\ast N_S(\Phi(z))$. Therefore, $(0,0,0)\in \Phi^\ast \partial\ell(\Phi(z)) + \overline{\partial} \Intf{f}(z) + \Phi^\ast N_{S}(\Phi(z)) + N_A(z)$. \newline
The above condition implies the existence of $(u^\ast_1,v^\ast)\in \partial\ell(\Phi(z))$, $(u^\ast_2,x^\ast,y^\ast)\in N_A(z)$, and $(u_3^\ast,w^\ast)\in N_{S}(\Phi(z))$ such that
$$
-\Phi^\ast(u^\ast_1,v^\ast) - (u^\ast_2,x^\ast,y^\ast)-\Phi^\ast(u^\ast_3,w^\ast)\in \overline{\partial}\Intf{f}(z).
$$
Moreover, since $\Phi^\ast(u^\ast,v^\ast) = (u^\ast+v^\ast,0,v^\ast)$ and $\overline{\partial} \Intf{f}(z) = \{0\}\times \overline{\partial}\Intf{f}(x,\dot{x})$, we obtain that $ u_1^\ast + u_2^\ast + u_3^\ast + v^\ast + w^\ast = 0$  and
$$
(-x^\ast,-v^\ast-w^\ast-y^\ast)\in \overline{\partial}\Intf{f}(x,\dot{x}).
$$
Next, by virtue of Lemma \ref{lema_dub}, $y^\ast(t) = -\int_t^b x^\ast(s)ds$ and $u_2^\ast =- \int_a^b x^\ast(s)ds$.  Hence, by taking $p(t) := u_1^\ast + u_2^\ast +u_3^\ast +  \int_t^bx^\ast(s)ds$, we get that $\dot{p}(t)=-x^{\ast}(t)$ and 
$$
p(t) = u_1^\ast + u_2^\ast +u_3^\ast+ \int_t^bx^\ast(s)ds= -v^\ast - w^\ast -y^\ast.
$$
We conclude that $(\dot{p},p)\in \overline{\partial}\Intf{f}(x,\dot{x})$.  Hence, by Theorem \ref{theo_clarke}, we obtain \eqref{eq_euler_1}. Finally, due to previous calculations, we get that
\begin{equation*}
    \begin{aligned}
(p(a),-p(b)) &= (u_1^\ast + u_2^\ast +u_3^\ast+ \int_a^bx^\ast(s)ds,-u_1^\ast - u_2^\ast-u_3^\ast )\\
&= (u_1^\ast + u_2^\ast + u_3^\ast+ \int_a^bx^\ast(s)ds,v^\ast+w^\ast )\\
&= (u_1^\ast,v^\ast) + (u_3^\ast,w^\ast)\\
 &\in \partial\ell(\Phi(z)) + N_{S}(\Phi(z))\\
 &=  \partial\ell(x(a),x(b)) + N_{S}(x(a),x(b)),
    \end{aligned}
\end{equation*}
which proves \eqref{eq_euler_2}. The proof of Theorem \ref{thm_clarke} is completed.
\qed

\section{Application to Sweeping Processes}\label{sect-9}

Let $\H$ be a Hilbert space. The sweeping process is a first-order differential inclusion involving the normal cone to a moving closed convex set. It was introduced by J.J. Moreau in the early seventies as a model for elastoplasticity in contact mechanics (see \cite{MO1,MO2,MO4}).  In its simplest form, it consists of  finding an absolutely continuous solution $x\colon [a,b]\to \H$ of the following differential inclusion:
\begin{equation}\label{SSPP}
\left\{
    \begin{aligned}
    \dot{x}(t)&\in -N_{C(t)}(x(t)) \textrm{ a.e. } t\in [a,b],\\
    x(0)&=x_0\in C(0).
    \end{aligned}
    \right.
\end{equation}
However, due to the presence of the normal cone, the above definition requires implicitly that $x(t)\in C(t)$ for all $t\in [a,b]$. Hence, so that the sweeping process admits a solution in the above sense, we require some continuity properties on the set-valued map $C$ (jumps on the moving sets are not allowed). Indeed, the first result on the existence of solutions for the sweeping process \eqref{SSPP} (see \cite{MO1}) requires that
\begin{align}\label{hyp_haus}
\operatorname{Haus}(C(t),C(s))\leq \kappa \vert t-s\vert  \quad t,s\in [a,b],
\end{align}
where $\kappa \geq 0$ and $\operatorname{Haus}(A,B)$ is the Hausdorff distance between $A$ and $B$.

Due to the practical interest in contact mechanics problems, where collisions and jumps can occur (see, e.g., \cite{MR1710456,MR3467591,MR2857428}), efforts have been made to extend the notion of  solution to the case of discontinuous moving sets (typically of bounded variation). In this section, we study two notions of solutions in the discontinuous setting and we prove that they are equivalent in a very general framework.

The first definition is based on \cite{MR3571564} and is the most widely used notion of solution.
\begin{definition}\label{def2}
We say that $x\colon [a,b]\to \H$ is a \emph{solution in the sense of differential measures} of the sweeping process \eqref{SSPP}  if 
\begin{enumerate}[label=\textit{(\alph*)}, ref=\ref{def2}-(\alph*)]
    \item The mapping $x(\cdot)$ is of bounded variation on $[a,b]$, right continuous, and satisfies $x(a)=x_0$ and $x(t)\in C(t)$ for all $t\in [a,b]$.
    \item  There exists a positive Radon measure $\nu$ with respect to which the differential measure $dx$ of $x(\cdot)$ is absolutely continuous with $\frac{dx}{\dnu}(\cdot)$ as an $L_{\nu}^1([a,b],\H)$-density and 
$$
\frac{dx}{\dnu}(t)\in -N_{C(t)}(x(t)) \quad \nu\textrm{-a.e. } t\in [a,b].
$$
\end{enumerate}
\end{definition}
Now, we consider the notion of  ``\emph{integral solution}'', which is based on  \cite{MR2774131,MR2491851} (see also  \cite{MR3369277}).
\begin{definition}\label{def1}
We say that $x\colon [a,b]\to \H$ is an \emph{integral solution} of the sweeping process \eqref{SSPP} if
\begin{enumerate}[label=\textit{(\alph*)}, ref=\ref{def1}-(\alph*)]
    \item The mapping $x(\cdot)$ is of bounded variation on $[a,b]$, right continuous, and satisfies $x(a)=x_0$ and $x(t)\in C(t)$ for all $t\in [a,b]$.
    \item There exists a positive Radon measure $\nu$ with respect to which the differential measure $dx$ of $x(\cdot)$ is absolutely continuous with $\frac{dx}{\dnu}(\cdot)$ as an $L_{\nu}^1([a,b],\H)$-density and for all $y\in \mathcal{C}([a,b],\H)$ with  $y(t)\in C(t)$ $\nu$-a.e. $t\in [a,b]$:
\begin{equation*}
    \int_a^b\left\langle \frac{dx}{\dnu}(t),y(t)-x(t)\right\rangle \dnu \geq 0.
\end{equation*}
\end{enumerate}

\end{definition}

We observe  that the equivalence between both definitions is not trivial at all. Indeed, on the one hand, Definition \ref{def1} requires the existence of continuous test functions with values in $C$, which do not necessarily exist. On the other hand, Definition \ref{def2} is based on a punctual characterization of the normal cone.  In any case, a good characterization of solution  may be appropriate for different aspects of sweeping processes.

The next result shows, as a consequence of Theorem \ref{mainresult}, the equivalence between these two definitions for a general set-valued map $C\colon [a,b]\tto \H$ with nonempty, closed, convex and bounded values. Here, we take $T = [a,b]$ and $\mathcal{A}$ as the Lebesgue $\sigma$-algebra, denoted by $\mathcal{L}([a,b])$.

\begin{theorem}
Assume that  $C\colon [a,b]\tto \H$ is a set-valued map with nonempty, closed, convex and bounded values. Then, $x(\cdot)$ is an integral solution if and only if $x(\cdot)$ is a solution  in the sense of differential measures. 
\end{theorem}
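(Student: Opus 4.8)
The plan is to prove the two implications separately, the forward one being elementary and the converse resting on the interchange formula of Theorem \ref{mainresult}. Throughout, I would write $g:=\frac{dx}{\dnu}\in L^1_\nu([a,b],\H)$ and introduce the integrand
\[
f_t(z):=\langle g(t),z-x(t)\rangle+\delta_{C(t)}(z),\qquad (t,z)\in[a,b]\times\H .
\]
Since both notions share condition \textit{(a)} and can be read relative to a common $\nu$, only the relation between the pointwise inclusion of Definition \ref{def2} and the integral inequality of Definition \ref{def1} must be clarified. Note that each $f_t$ is convex and lsc, so $\epi f_t$ is closed and convex, that $f_t(x(t))=0$ because $x(t)\in C(t)$, and hence that $\m{f}(t)=\inf_{z\in C(t)}\langle g(t),z-x(t)\rangle\le 0$ for every $t\in[a,b]$.

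For the implication ``differential measures $\Rightarrow$ integral'', I would simply unfold the normal cone inclusion: $g(t)\in -N_{C(t)}(x(t))$ $\nu$-a.e. means, by convexity of $C(t)$, that $\langle g(t),z-x(t)\rangle\ge 0$ for every $z\in C(t)$. Taking $z=y(t)$ for a continuous $y$ with $y(t)\in C(t)$ $\nu$-a.e. and integrating against $\nu$ yields $\int_a^b\langle g(t),y(t)-x(t)\rangle\dnu\ge 0$, which is condition \textit{(b)} of Definition \ref{def1}.

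For the converse ``integral $\Rightarrow$ differential measures'', the idea is to read Definition \ref{def1} as a statement about $f$. For any $\phi\in\mathcal{C}([a,b],\H)$ one has $\int_a^b f_t(\phi(t))\dnu\ge 0$: if $\phi(t)\in C(t)$ $\nu$-a.e. this is exactly the integral inequality, and otherwise the integral equals $+\infty$. Hence $\inf_{\phi\in\mathcal{C}([a,b],\H)}\int_a^b f_t(\phi(t))\dnu\ge 0$, and Theorem \ref{mainresult}, applied with $\mu=\nu$ (inner regular as a Radon measure and finite on the compact interval $[a,b]$), identifies this infimum with $\int_a^b\m{f}(t)\dnu$. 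Together with $\m{f}\le 0$ this forces $\m{f}(t)=0$ $\nu$-a.e., i.e. $\langle g(t),z-x(t)\rangle\ge 0$ for all $z\in C(t)$, that is $g(t)\in -N_{C(t)}(x(t))$ $\nu$-a.e., which is condition \textit{(b)} of Definition \ref{def2}. The set $\Dc{\m{f}}$ is nonempty because the constant function $1$ is continuous, $\nu$-integrable (as $\nu([a,b])<\infty$) and satisfies $\m{f}(t)<1$ for every $t$.

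The main obstacle is the remaining hypothesis of Theorem \ref{mainresult}, namely the lower semicontinuity of $t\tto\epi f_t$, which is delicate precisely because $g$ is only $\nu$-integrable and $C$ carries no a priori continuity. I would handle it through the inner regularity of $\nu$: given $\eps>0$, Proposition \ref{lusin} applied to the strongly measurable map $g$ produces a compact set $K\subset[a,b]$ with $\nu([a,b]\setminus K)<\eps$ on which $g$ is continuous, so that $(t,z)\mapsto\langle g(t),z-x(t)\rangle$ is continuous on $K\times\H$; combining this with the lower semicontinuity of $t\tto C(t)$ on $K$ (its Lusin-multifunction property) yields that $K\ni t\tto\epi f_t$ is lsc, and the computation above is then carried out on each such $K$ before letting $\eps\to0$. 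Establishing this lower semicontinuity, and in particular controlling the continuous test functions off $K$, is the technical heart of the argument; once it is in place, the equivalence follows from the interchange formula as described.
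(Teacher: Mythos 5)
Your overall strategy is the same as the paper's: the same integrand $f_t(z)=\langle \frac{dx}{\dnu}(t),z-x(t)\rangle+\delta_{C(t)}(z)$, the elementary forward implication, and for the converse the interchange formula of Theorem \ref{mainresult} applied on Lusin compacta $K$ where $\frac{dx}{\dnu}$ and $x$ are continuous. The forward direction and the verification that $K\ni t\tto\epi f_t$ is lsc (via Proposition \ref{lusin} and Proposition \ref{binary_prop}) are fine, as is the observation that $\m{f}\le 0$ and $\Dc{\m{f}}\neq\emptyset$. But there is a genuine gap, and it sits exactly at the point you defer as ``the technical heart'': controlling the test functions off $K$. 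Theorem \ref{mainresult} cannot be applied on $[a,b]$ itself (the epigraph map is lsc only on the compacta $K_n$, since $\frac{dx}{\dnu}$ is merely $L^1_\nu$), so what the interchange formula gives you is $\beta_n:=\inf_{\phi\in\mathcal{C}(K_n,\H)}\int_{K_n}f_t(\phi(t))\dnu=\int_{K_n}\m{f}(t)\dnu$. The hypothesis of Definition \ref{def1}, however, only bounds from below integrals over all of $[a,b]$ against functions continuous on all of $[a,b]$ with values in $C(t)$. A near-minimizer $\phi_n\in\mathcal{C}(K_n,\H)$ is \emph{not} such a test function: it lives on $K_n$ only, and a Tietze-type extension will not take values in $C(t)$ off $K_n$. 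So the inequality $\beta_n\ge\beta-o(1)$, which is what you need to conclude $\int_{K_n}\m{f}\,\dnu\ge -o(1)$ and hence $\m{f}=0$ $\nu$-a.e., is precisely what remains unproved in your write-up; ``letting $\eps\to 0$'' has no content without it.

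The paper closes this gap with a concrete construction you would need to reproduce: it extends $\phi_n$ to a continuous \emph{selection} $\varphi_n$ of $C$ on all of $[a,b]$ by applying Michael's theorem to the lsc multifunction $M_n(t)=\cl\left(\mathbb{B}_{1/n}(\phi_n(t))\cap C(t)\right)$ for $t\in K_n$ and $M_n(t)=C(t)$ otherwise (using that the intersection of an open-graph map with an lsc map is lsc), and then estimates the two error terms $\left|\int_{K_n}[f_t(\phi_n)-f_t(\varphi_n)]\dnu\right|\le \frac{1}{n}\int_a^b\Vert\frac{dx}{\dnu}\Vert\dnu$ and $\left|\int_{[a,b]\setminus K_n}f_t(\varphi_n)\dnu\right|\le R\int_{[a,b]\setminus K_n}\Vert\frac{dx}{\dnu}\Vert\dnu$, the latter using a uniform bound $R$ on $\operatorname{diam} C(t)$. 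Note also that this whole step, in your sketch as in the paper, needs lower semicontinuity of $t\tto C(t)$ (you invoke ``its Lusin-multifunction property''), which does not follow from the theorem's stated hypotheses of nonempty closed convex bounded values alone; the paper's own proof quietly imports the Hausdorff-continuity assumption \eqref{hyp_haus} for this and for the uniform bound $R$, so if you want a self-contained argument you must either assume such continuity explicitly or justify the Lusin property of $C$ by other means.
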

\begin{proof}
On the one hand, if $x$ is a solution in the sense of differential measures (see Definition \ref{def2}), it is enough to integrate the function $t\mapsto \langle\frac{dx}{\dnu}(t),y(t)-x(t)\rangle$ for all test function $y\in \mathcal{C}([a,b],\H)$ to conclude that $x$ is an integral solution.\\
On the other hand, let $x\colon [a,b]\to \H$ be an integral solution, i.e., for all $y\in \mathcal{C}([a,b],\H)$ with  $y(t)\in C(t)$ $\nu$-a.e. $t\in [a,b]$: $$\int_a^b\left\langle \frac{dx}{\dnu}(t),y(t)-x(t)\right\rangle \dnu \geq 0.$$
Then, we have that 
$$ \beta := \inf_{y\in \mathcal{C}([a,b],\H)} \int_a^b \left\langle \frac{dx}{\dnu}(t),y(t)-x(t)\right\rangle + \delta_{C(t)}(y(t))\dnu\geq 0.$$ Now, let us define $f_t(y):=\left\langle \frac{dx}{\dnu}(t),y-x(t)\right\rangle + \delta_{C(t)}(y)$.  It is possible to prove that $f$ is a Lusin integrand. Indeed, let $A\in\mathcal{L}([a,b])$ and $\epsilon>0$, we can choose a compact set $K\subset A$ with $\nu(A\setminus K)<\epsilon$ such that $\frac{dx}{\dnu}\colon K\to \H$ and $x\colon K\to \H$ are continuous (Lusin Theorem). Then, $K\times X\ni(t,y)\mapsto \left\langle \frac{dx}{\dnu}(t),y-x(t)\right\rangle$ is continuous, so it is a Scorza-Dragoni function. Also, we can observe that for all $t\in [a,b]$, $\epi \delta_{C(t)} = C(t)\times [0,+\infty)$ and given that $C$ is lsc (see \cite[Proposition 6.1.37]{MR2527754}), we have that $t\tto \epi \delta_{C(t)}$ is lsc, then by Proposition \ref{binary_prop}, $K\ni t\tto \epi f_t$ is lsc. By Theorem \ref{mainresult}, there exists a nondecreasing sequence of compacts sets $(K_n)_{n\in\mathbb{N}}$ such that 
\begin{align*}
		\beta_n:=\inf_{\phi\in {C}(K_n,\H)}\int_{K_n} f_t(\phi(t)) \dnu =\int_{K_n}\inf_{y\in \H}f_t(y)\dnu,
\end{align*}
with $\nu([a,b]\setminus (\bigcup K_n))=0$. For all $n\in\mathbb{N}$, there exists $\phi_n\in {C}(K_n,\H)$ such that $$ \beta_n+\frac{1}{n}\geq \int_{K_n}f_t(\phi_n(t))\dnu,$$
which implies that $\phi_n(t)\in C(t) \ \nu$-a.e. $t\in K_n$. The hypothesis about the continuity under Hausdorff distance of $C$ allows us to obtain that $\phi_n(t)\in C(t)$ for all $t\in K$. Let us consider $\hat M_n(t)=U_n(t)$ if $t\in K_n$ and $M_n(t)=\H$ if  $t\notin K_n$, where $U_n(t) = \{x\in H:\|\phi_n(t)-x\|<1/n\}$. It is not difficult to see that $\hat M_n$ has open graph and  $\hat M_n(t)\cap C(t)\neq \emptyset$ for all $t\in [a,b]$. Then, by virtue of \cite[Proposition 6.1.24]{MR2527754}, the multifunction $\hat M_n\cap C$ is lsc. Hence,  due to  \cite[Proposition 6.1.19]{MR2527754}, we have that $M_n(t) := \cl(\hat M_n(t)\cap C(t))$ is lsc and
\[   
M_{n}\colon t\rightrightarrows 
     \begin{cases}
       \cl\left({U_n(t)\cap C(t)}\right) &\quad\text{ if } t\in K_n, \\
       C(t) &\quad\text{ if } t\notin K_n. \\
     \end{cases}
\]
It is clear that $M_n$ takes closed, convex and nonempty values. Hence, by Michael's Selection Theorem, we obtain a continuous selection $\varphi_n$ of $M_n$ such that $\|\varphi_n(t)-\phi_n(t)\|\leq 1/n$ for all $t\in K_n$. Now, we observe that 
\begin{eqnarray*}
    \int_{K_n}f_t(\phi_n(t))\dnu &=& \int_a^b f_t(\varphi_n(t))\dnu+\int_{K_n} f_t(\phi_n(t))-f_t(\varphi_n(t))\dnu \\
    &&- \int_{[a,b]\setminus K_n}f_t(\varphi_n(t))\dnu\\
    &\geq & \beta + \int_{K_n} f_t(\phi_n(t))-f_t(\varphi_n(t))\dnu - \int_{[a,b]\setminus K_n}f_t(\varphi_n(t))\dnu.
\end{eqnarray*}
Moreover, on the one hand,
\begin{equation*}
    \begin{aligned}
      \left|\int_{K_n} f_t(\phi_n(t))-f_t(\varphi_n(t))\dnu\right|&\leq \int_{K_n}|f_t(\phi_n(t))-f_t(\varphi_n(t))|\dnu\\
    &\hspace{-12mm}=\int_{K_n}\left|\left\langle \frac{dx}{\dnu}(t),\phi_n(t)-\varphi_n(t)\right\rangle\right|\dnu \leq\frac{1}{n} \int_a^b \left\Vert \frac{dx}{\dnu}(t)\right\Vert \dnu. 
    \end{aligned}
\end{equation*}
On the other hand,
\begin{equation*}
    \begin{aligned}
   \left|\int_{[a,b]\setminus K_n}f_t(\varphi_n(t))\dnu\right| 
    &\leq \int_{[a,b]\setminus K_n}\left\|\frac{dx}{\dnu}(t)\right\|\cdot \|\varphi_n(t)-x(t)\|\dnu\\
    &\leq R \int_{[a,b]\setminus K_n}\left\|\frac{dx}{\dnu}(t)\right\|\dnu, 
    \end{aligned}
\end{equation*}
where $R>0$ satisfies that $\sup_{t\in [a,b]}\text{diam}(C(t))\leq R$, such $R$ exists because $C$ has bounded values, and the assumption \eqref{hyp_haus} allows us to prove that $C$ has uniform bounded values. Therefore, for all $n\in\mathbb{N}$, 
$$ \beta_n + \frac{1}{n}\geq \beta - \frac{1}{n} \int_a^b \left\Vert \frac{dx}{\dnu}(t)\right\Vert \dnu -R\int_{[a,b]\setminus K_n}\left\|\frac{dx}{\dnu}(t)\right\|\dnu.$$ 
Hence, taking $n\to \infty$, we get that $\liminf_{n\to\infty} \beta_n\geq \beta$. Moreover,  
$$ \hspace{-1mm}\liminf_{n\to\infty}\beta_n = \liminf_{n\to\infty}\int_{K_n}\inf_{y\in \H}f_t(y)\dnu = \lim_{n\to\infty}\int_{K_n}\inf_{y\in \H}f_t(y)\dnu= \int_a^b\inf_{y\in \H}f_t(y)\dnu, $$  since  $t\mapsto\inf_{y\in \H}f_t(y)$ is integrable.  Therefore,  $\int_a^b\inf_{y\in \H}f(t,y)\dnu\geq \beta\geq 0$. Finally, since $\inf_{y\in \H}f(t,y)\leq 0$ $\nu$-a.e. $t\in [a,b]$ (because $x(t)\in C(t)$ $\nu$-a.e. $t\in [a,b]$), we conclude that $\inf_{y\in \H} f_t(y)\geq 0$ $\nu$-a.e $t\in [a,b]$, which proves that $x$ is a solution in the sense of differential measures.
\end{proof}

\bibliographystyle{plain}
\bibliography{references}
\end{document}